\documentclass[12pt,reqno]{amsart}

\newcommand\version{December 17, 2024}

\usepackage{amsmath,amsfonts,amsthm,amssymb,amsxtra}
\usepackage{bbm} 
\usepackage{hyperref}	

\usepackage{xfrac}
\usepackage{todonotes, relsize, pgfplots, tikz, float}
\usetikzlibrary{calc,decorations.markings}

\newtheorem{theorem}{Theorem}
\newtheorem{proposition}[theorem]{Proposition}
\newtheorem{lemma}[theorem]{Lemma}
\newtheorem{corollary}[theorem]{Corollary}

\theoremstyle{definition}

\theoremstyle{remark}

\numberwithin{equation}{section}

\renewcommand{\epsilon}{\varepsilon}

\newcommand{\loc}{{\rm loc}}

\newcommand{\N}{\mathbb{N}}

\renewcommand{\phi}{\varphi}
\newcommand{\R}{\mathbb{R}}

\newcommand{\lo}{\mathrm{lo}}
\newcommand{\me}{\mathrm{med}}
\newcommand{\hi}{\mathrm{hi}}

\DeclareMathOperator{\dist}{dist}
\DeclareMathOperator{\Div}{div}

\DeclareMathOperator{\tr}{Tr}
\DeclareMathOperator{\Ric}{Ric}
\DeclareMathOperator{\vol}{vol}

\usepackage{mathtools, todonotes, thmtools}\usepackage{tikz}
\usetikzlibrary{datavisualization}
\usetikzlibrary{datavisualization.formats.functions}
\usetikzlibrary{patterns}

\let\oldtocsection=\tocsection

\let\oldtocsubsection=\tocsubsection

\let\oldtocsubsubsection=\tocsubsubsection

\renewcommand{\tocsection}[2]{\hspace{0em}\oldtocsection{#1}{#2}}
\renewcommand{\tocsubsection}[2]{\hspace{1em}\oldtocsubsection{#1}{#2}}
\renewcommand{\tocsubsubsection}[2]{\hspace{2em}\oldtocsubsubsection{#1}{#2}}

\begin{document}
	
	\begin{titlepage}
		\huge \title[Sharp $\sigma_k$-curvature inequality]{The sharp $\sigma_k$-curvature inequality on locally conformally flat manifolds in quantitative form}
		\vspace{7cm}
	\end{titlepage}

	\author{Jonas W.~Peteranderl}
	
	\address[Jonas W.~ Peteranderl]{Mathematisches Institut, Ludwig-Maximilians-Universit\"at M\"unchen, Theresienstr.~39, 80333 M\"unchen, Germany}	\email{peterand@math.lmu.de}

    \subjclass[2020]{Primary: 53C21. Secondary: 35B35, 35J60, 53C18, 58J05, 26D10.}
\keywords{$\sigma_k$-curvature, quantitative stability, conformal geometry, $\sigma_k$-Yamabe problem, fully nonlinear elliptic equations,
sharp functional inequalities, Guan--Wang flow, locally conformally flat manifolds, frequency decomposition.}
\date{\version}
	
	\begin{abstract}
    Let $2\leq k<n/2$ and let $(M^n,[g])$ be a smooth, closed, connected, and locally conformally flat Riemannian manifold with a $k$-admissible metric in the conformal class $[g]$. We prove a stability result of the $\sigma_k$-curvature inequality on $M$, in the sense that if equality is almost satisfied for some conformal metric, then its conformal factor is close to a minimizer of the inequality. Closeness is measured quantitatively in terms of Sobolev norms of the conformal factor, namely with respect to the $W^{1,2}$- and the $W^{1,2k}$-norm. In the non-degenerate case, these norms come with optimal exponents $2$ and $2k$, respectively, whereas in general the exponents are $2+\gamma$ and $\max\{2k,2+\gamma\}$ for some $\gamma\geq 0$ originating from a {\L}ojasiewicz inequality. This extends a previous result by Frank and the author from $k=2$ and the sphere to $2\leq k<n/2$ and the full class of manifolds originally considered by Viaclovsky. It also extends a previous result by Engelstein--Neumayer--Spolaor from $k=1$ to the setting of fully non-linear scalar curvatures.
	\end{abstract}
	
	\date{\today}
	\thanks{\copyright\, 2026 by the author. This paper may be reproduced, in its entirety, for non-commercial purposes.}
	
	\maketitle
	\setcounter{page}{1}

	\section{Introduction and main result}
	
	We consider smooth Riemannian manifolds $(M^n,g)$, $n\geq 3$. 
    The associated \textit{Schouten tensor}, denoted by $$A_g\coloneqq \frac{1}{n-2} \left(\Ric_g-\frac{R_g}{2(n-1)} g\right)$$ emerges in the Weyl decomposition of the Riemannian curvature tensor. If $A$ is an $n\times n$-matrix, then $\sigma_k(A)$ denotes the $k$-th coefficient of the characteristic polynomial of $(-A)$ as given by Vieta's formula $$\det(\mathbbm 1+tA)=\sum_{k=0}^n \sigma_k(A)t^k\,.$$ If $A$ is diagonalizable, $\sigma_k(A)$ is the $k$-th elementary symmetric polynomial of the eigenvalues of $A$. The quantities $\sigma_k(A)$ are well-known invariants of the matrix, most notably $\sigma_1(A)=\tr A$ and $\sigma_n(A)=\det A$.
    
    As a natural generalization of the scalar curvature $R_g=2(n-1)\sigma_1(g^{-1}A_g)$ in the context of the Yamabe problem, Viaclovsky \cite{Via00} introduced the notion of \textit{$\sigma_k$-(scalar) curvatures} \begin{equation}
        \label{eq:sigma_k_def}\sigma_k(g)\coloneqq \sigma_k(g^{-1}A_g)\coloneqq  \sigma_k(\lambda(g^{-1}A_g))\,,
    \end{equation} that is, the $k$-th elementary symmetric polynomial of the eigenvalues $\lambda=\lambda(g^{-1}A_g)$ of the Schouten tensor. For $k=0$ we set $\sigma_0=1$. The abuse in notation in \eqref{eq:sigma_k_def} should cause no confusion as the entries lie in different spaces.  A metric is called \textit{$k$-admissible}, $k\in \N$, if the eigenvalues of $g^{-1}A_g$ lie in the \textit{G\aa rding cone} $$\Gamma_{k}^+\coloneqq \{ \lambda\in \R^n : \sigma_j(\lambda)>0, 1\leq j\leq k\}\,.$$ For any metric $h$ on $M$, we write $$\mathcal F_k[h]\coloneqq \frac{1}{\vol_h(M)^{\frac{n-2k}{n}}}\int_M\sigma_k(h)\,\mathrm d\vol_h$$ for the total normalized $\sigma_k$-curvature. If $k\neq n/2$ and either $M$ is locally conformally flat or $k\leq 2$, finding critical points of this functional corresponds by \cite{Via00} to the Yamabe problem for $\sigma_k$-curvatures: Find a metric $\tilde g$ conformal to $g$ such that $\sigma_k(\tilde g)$ is constant. For $k=1$ this corresponds to the original Yamabe problem, which was solved in consecutive works \cite{Yam60,Tru68,Aubin1976,Sch84}. On locally conformally flat manifolds, existence was proved independently by Guan--Wang \cite{GW04} using a conformal flow and Li--Li \cite{Li2005} using elliptic methods. If the problem is variational, then existence was proved for $k\leq n/2$ in  \cite{ShengTrudingerWang2007}; though by \cite{BransonGover2008}, the problem is variational exactly if the manifold is locally conformally flat or $k\leq 2$. For $k>n/2$, existence was proved in \cite{GV07}, and for $k=n/2$ with $n=4$ in \cite{Chang2002, Chang2002a} and for $n$ even in \cite{TrudingerWang2010}. In contrast the problem is still open in case of not locally conformally flat manifolds and $3\leq k<n/2$.
    \vspace{0.2cm}
     
      Every metric in the conformal class $[g]$ of $g$ is given by $$g_u\coloneqq u^{\frac{4k}{n-2k}}g$$ for some smooth positive function $u:M\to \R$. 
     Then we have
     $\mathrm d {\vol}_{g_u}  =u^{q}\mathrm d {\vol}_{g}$, where  we use in the following  the convention that $$q\coloneqq \frac{2kn}{n-2k}\,.$$ Introducing the notation $$F_{k,g}[u]\coloneqq \mathcal F_k[g_u]\qquad \text{and}\qquad\mathcal C_k([g])\coloneqq \{u\in C^\infty(M;\R) : u>0 \ \text{with} \ g_u \ \text{$k$-admissible}\}\,,$$ the $\sigma_k$-Yamabe constant is
     $$ Y_k= Y_k(M,[g])\coloneqq \inf_{u\in \mathcal C_k([g])} F_{k,g}[u]\,.$$ Then we call 
     $$F_{k,g}[u]\geq Y_k$$ the \textit{$\sigma_k$-curvature inequality} for $u\in \mathcal C_k([g])$. We denote by $L^r(g)$ the Lebesgue space on $(M,g)$ with exponent $r$ and  by $W^{s,r}(g)$ the Sobolev space on $(M,g)$ with exponents $s$ and $r$. The set of normalized minimizers $$\mathcal M_k\coloneqq \{v\in \mathcal C_k([g]):\|v\|_{L^q(g)}=1\,,\quad F_{k,g}[v]=Y_k(M,[g])\}$$ is non-empty; see Proposition \ref{prop:cpct}. Note that it does not have to be a manifold in general.

    We denote the group of conformal diffeomorphisms $\Psi$ of $(M,[g])$ by $G$ and define the Jacobian $J_\Psi=|\det D\Psi|$ via $\Psi^*\, \mathrm d \vol_{g}= J_\Psi \, \mathrm d \vol_{g}$. The transformation \begin{equation}
        \label{eq:conf}(u)_\Psi\coloneqq J_\Psi^{\frac 1q}(u\circ \Psi)\,,\qquad \Psi \in G\,,\end{equation} leaves the volume and the total $\sigma_k$-curvature, and thus $F_{k,g}$, invariant, that is $F_{k,g}[(u)_\Psi]=F_{k,g}[u]$. Note that $((u)_{\Psi_1})_{\Psi_2}=(u)_{\Psi_1\circ\Psi_2}$.

The case $k=1$ recovers (up to a dimensional constant) the classical Yamabe functional. In particular, on the round sphere $(\mathbb S^n,g_*)$ the corresponding variational inequality  is equivalent to the sharp Sobolev inequality on Euclidean space. The first genuinely fully nonlinear case is $k=2$. Frank and the author \cite{FrankPeteranderl2024_sigma2} proved on the sphere that if the $\sigma_2$-curvature inequality is almost an equality -- or equivalently if the volume-normalized total $\sigma_2$-curvature is almost minimal -- for some admissible metric,  then this metric is almost a minimizer. Their result gives a quantitative refinement of the $\sigma_2$-curvature inequality: There is a $c_n>0$ such that
    \begin{equation}
       F_{2,g_*}[u]-Y_2(\mathbb S^n,[g_*]) \geq c_n\inf_{\lambda>0,\Psi\in G} (\|\lambda(u)_\Psi-1\|^{2}_{W^{1,2}(g_*)}+\|\lambda (u)_\Psi-1\|^{4}_{W^{1,4}(g_*)})\label{eq:sigma_2}\end{equation} for every smooth $u>0$ with $\sigma_1(g_u)>0$.

    Another aspect that enters for general locally conformally flat manifolds is that the kernel of the conformal Hessian $Q_{g}$ of $F_{k,g}$ does not have to coincide with the tangent space of the set of minimizers; cf.~Subsection \ref{subsec:conf_Hess}. On closed Riemannian manifolds not conformally equivalent to the sphere, Engelstein--Neumayer--Spolaor \cite{EnNeSp} proved (with an equivalent notion of distance) that there are constants $c>0$ and $\gamma\geq 0$ depending on $(M,[g])$ such that \begin{equation}\label{eq:Yamabe_1}F_{1,g}[u]-Y_1(M,[g])\geq c \inf_{\lambda>0,v\in \mathcal M_1} \|\lambda u-v\|_{W^{1,2}}^{2+\gamma}\,.\end{equation}
    
    To guarantee $\gamma=0$, we use the following notion. We say that the set of normalized minimizers $\mathcal M_k$ is \textit{non-degenerate} if the set $\R_+\mathcal M_k$ is locally a finite-dimensional smooth manifold and it holds that $$\ker Q_{g_v}=  v^{-1}T_v(\R_+\mathcal M_k)$$ 
    for all $v\in \mathcal M_k$.  It turns out to be convenient to work with functions relative to the minimizer, which explains the factor $v^{-1}$ in the above definition of non-degeneracy. After removing the multiplication by constants and the relative factor $v^{-1}$, note that this notion of non-degeneracy is equivalent to the notion of integrability in \cite{AS88,CCR15,EnNeSp} in case the Hessian is self-adjoint and elliptic and the functional is analytic; see \cite[Subsection 1.5]{Fee20}.
 
In the following, we will restrict to $2\leq k< n/2$. We can now state our main theorem.

\begin{theorem}[Quantitative stability]\label{thm:1} Let $2\leq k< n/2$ and $(M^n,[g])$ be a smooth, closed, connected, locally conformally flat Riemannian manifold with $k$-admissible metric $g$. Then there are $\gamma=\gamma(M,[g],k)\geq 0$ and $c=c(M,[g],k)>0$ such that
    \begin{equation*}
       F_{k,g}[u]-Y_k(M,[g]) \geq c\inf_{\lambda>0,\Psi\in G\,,  v\in \mathcal M_k} (\|\lambda v^{-1}(u)_\Psi-1\|^{2+\gamma}_{W^{1,2}(g_v)}+\|\lambda v^{-1}(u)_\Psi-1\|^{\max\{2k,2+\gamma\}}_{W^{1,2k}(g_v)})
    \end{equation*} for every $u\in \mathcal C_k([g])$. If $\mathcal M_k$ is non-degenerate, then we may take $\gamma=0$. 
\end{theorem}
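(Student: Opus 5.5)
The plan is a two-step scheme in the spirit of \cite{FrankPeteranderl2024_sigma2} and \cite{EnNeSp}: first a \emph{qualitative} reduction to functions close to $\R_+\mathcal M_k$, then a \emph{local} quantitative estimate near a minimizer.

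\textbf{Step 1: reduction to small distance.} I first argue by compactness that if $F_{k,g}[u]-Y_k\to 0$ along a sequence $u_j\in\mathcal C_k([g])$ with $\|u_j\|_{L^q}=1$, then, after applying suitable $\Psi_j\in G$, the sequence $(u_j)_{\Psi_j}$ converges to some $v\in\mathcal M_k$ in $W^{1,2k}$. On the sphere this means handling concentration through the conformal group. On non-spherical $M$, $G$ is compact, and the compactness of $\mathcal M_k$ (Proposition \ref{prop:cpct}) together with a concentration-compactness argument rules out bubbling: a bubble would cost at least $Y_k(\mathbb S^n)>Y_k(M)$. The key tool for the admissible class is the control of $\int|\nabla u|^{2k}$ by $\int\sigma_k$ plus lower-order terms, obtained from the divergence structure of $\sigma_k$ on locally conformally flat manifolds (Newton tensors are divergence free). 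If this limit is unavailable directly, I would run the Guan--Wang flow starting at $u_j$, which decreases $F_{k,g}$ and converges to a minimizer, to produce nearby minimizers. Once $\operatorname{dist}(u,\R_+\mathcal M_k)\to0$ is established, the far regime follows trivially, because the right-hand side is bounded on normalized $u$ by the Sobolev control just described.

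\textbf{Step 2: local expansion.} Near $v$, I write $\lambda v^{-1}(u)_\Psi=1+\phi$ and work with respect to $g_v$. I choose $(\lambda,\Psi,v)$ to minimize the distance, or at least to make $\phi$ orthogonal to $v^{-1}T_v(\R_+\mathcal M_k)$ together with the $G$-directions. I then expand $F_{k,g_v}[1+\phi]-Y_k$, splitting it as
\[
\tfrac12\langle Q_{g_v}\phi,\phi\rangle+\text{cubic and higher terms}.
\]
Because $\sigma_k$ of the conformal Schouten tensor is a polynomial in $\nabla\phi$ and $\nabla^2\phi$, integrating by parts with the divergence-free Newton tensors shows that the top-order part contains
\[
c\int|\nabla\phi|^{2k}\,\mathrm d\vol_{g_v}
\]
with a positive sign, by admissibility and convexity of the $\Gamma_k^+$ structure. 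The intermediate terms $\int|\nabla\phi|^{2+j}$ are absorbed by interpolating between the $W^{1,2}$ and $W^{1,2k}$ quantities, again using smallness in $W^{1,2k}$. In the non-degenerate case, $Q_{g_v}$ is coercive in $W^{1,2}$ on the orthogonal complement of its kernel, which is exactly the tangent space. This gives $\gtrsim\|\phi\|_{W^{1,2}}^2+\|\phi\|_{W^{1,2k}}^{2k}$, i.e.\ $\gamma=0$.

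\textbf{Step 3: degenerate case (main obstacle).} In general $\ker Q_{g_v}$ is strictly larger than the tangent space, so I would use a Lyapunov--Schmidt reduction. I decompose $\phi=\phi_0+\phi_\perp$ with $\phi_0\in\ker Q_{g_v}$ and solve the projected Euler--Lagrange equation for $\phi_\perp$ via the implicit function theorem. This produces a finite-dimensional analytic reduced functional $f(\phi_0)$, to which I apply the {\L}ojasiewicz inequality
\[
|f(\phi_0)-f(0)|^{1-\theta}\le C|\nabla f(\phi_0)|,
\]
which yields
\[
f(\phi_0)-Y_k\gtrsim\operatorname{dist}(\phi_0,\text{crit})^{2+\gamma}
\]
as in \cite{EnNeSp}. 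The difficulty is that the functional is fully nonlinear and only defined on the admissible cone, and the Hessian is elliptic only at admissible metrics. The reduction must therefore be carried out in a Hölder or $W^{2,p}$ setting, with the final estimate then transferred back to the weaker $W^{1,2}$ and $W^{1,2k}$ norms. The remaining orthogonal part is handled as in Step 2, combining the quadratic coercivity on the complement with the $2k$-th-power term. This produces the exponents $2+\gamma$ and $\max\{2k,2+\gamma\}$: the degenerate directions only contribute power $2+\gamma$, which dominates $2k$ whenever $\gamma>2k-2$.
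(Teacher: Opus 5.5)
\textbf{Gap 1: the global-to-local step.} You take compactness modulo $G$ of almost-minimizing \emph{admissible} functions as an input, via concentration compactness. No such profile decomposition is available for the second-order, fully nonlinear functional $F_{k,g}$: a weak $W^{1,2k}$-limit of admissible functions need not be admissible (or even $C^2$), and there is no Brezis--Lieb-type splitting of $\int\sigma_k$. What is known is compactness of almost-minimizing \emph{critical points} (Li--Li, and Viaclovsky's classification on the sphere). In the paper, compactness of almost-minimizing admissible functions is Corollary~\ref{cor:glob2loc}, a \emph{consequence} of the theorem, not an input.

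Your fallback does not repair this. Knowing that the Guan--Wang flow started at $u$ has a limit close to $\mathcal M_k$ says nothing about the distance of $u$ itself. The missing idea is a stopping-time argument. Fix $\theta<\min\{\rho_{\loc},\delta^*/2\}$, and choose $\varepsilon_*$ so that normalized critical points with deficit at most $\varepsilon_*$ satisfy $\dist_{k,\gamma}<\theta/2$. If $\dist_{k,\gamma}(u)>\theta$, the flow reaches $\dist_{k,\gamma}(u_T)<\theta$ at some time $T$. By continuity of $t\mapsto\min\{\dist_{k,\gamma}(u_t),\delta^*\}$ there is then $\tau$ with $\dist_{k,\gamma}(u_\tau)=\theta$. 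This continuity is Lemma~\ref{lem:att}; it needs the infimum to be attained with $\lambda$ and the M\"obius parameter confined to a compact set. The local bound at $u_\tau$, monotonicity of $F_{k,g}$ along the flow, and $\dist_{k,\gamma}\le 2$ then give $F_{k,g}[u]-Y_k\ge c_{\loc}\theta\ge\tfrac{c_{\loc}\theta}{2}\dist_{k,\gamma}(u)$.

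\textbf{Gap 2: the non-kernel part in Steps~2--3.} Your Lyapunov--Schmidt/{\L}ojasiewicz treatment of the kernel matches the paper's low-frequency part; the gap is in the orthogonal part. The expansion $\tfrac12\langle Q_{g_v}\phi,\phi\rangle+{}$(cubic and higher) has a remainder controlled only for $\phi$ small in $C^2$. Here $\phi$ is small only in $W^{1,2k}$ with $2k<n$, so not even in $L^\infty$. The remainder contains $\nabla^2\phi$ up to degree $k$, which cannot in general be integrated away (cf.\ the $T_1$ discussion in Subsection~\ref{subsec:comp}). Even purely first-order intermediate terms are not absorbable by interpolation. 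For $k=2$, $\|\nabla\phi\|_3^3\le\|\nabla\phi\|_2\|\nabla\phi\|_4^2\le\tfrac12(\|\nabla\phi\|_2^2+\|\nabla\phi\|_4^4)$ is of the same order as the main terms; this is exactly the Figalli--Zhang difficulty.

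The paper never estimates a Taylor remainder for the rough part.
\begin{itemize}
\item It splits multiplicatively, $u=cv(\alpha_g(\xi)+z)(1+\eta)$. Here $\xi\in K_g$ and the medium modes $z$ are finite-dimensional and smooth, so Taylor expansion, the spectral gap and {\L}ojasiewicz are legitimate for them.
\item The factor $1+\eta$ is automatically admissible for $\tilde g=(\alpha_g(\xi)+z)^{4k/(n-2k)}g$, a property an additive $\phi_\perp$ lacks.
\item For $1+\eta$ it uses the exact identity $\mathcal E(1)-\mathcal E(0)-\mathcal E'(0)=\int_0^1(1-t)\mathcal E''(t)\,\mathrm dt$ along $V_t=1+t(V-1)$, $V=(1+\eta)^{-q/n}$, a path that stays $k$-admissible. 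All second derivatives then sit inside $T_{k-1}(B_g(V_t))\ge 0$.
\item Superadditivity of $T_{k-1}$ (Lemma~\ref{lem:superadd}) then extracts $\int|\nabla u|^{2k}+C_g\int|\nabla u|^2$ with no error term (Proposition~\ref{prop:glob_W12k}).
\item The Harnack lower bound of Lemma~\ref{lem:Har}, which comes from positive scalar curvature, converts $V$ back to $\eta$.
\item A large spectral cutoff $\Lambda$ absorbs the leftover $L^2$ and $L^q$ errors.
\item The mixed term is of higher order because $D_{\alpha_g(\xi)}F_{k,g}[y]=0$ and $D^2_1F_{k,g}[z,y]=0$.
\end{itemize}
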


This theorem generalizes the case $(M^n,[g])=(\mathbb S^n,g_*)$ and $k=2$ from \cite{FrankPeteranderl2024_sigma2}. We also mention a recent result for $k=2$ and Einstein metrics by Wu \cite{Wu26}. Also our notion of distance is a natural extension of that in \cite{FrankPeteranderl2024_sigma2}. Indeed, in the non-degenerate case, the two-term distance with quadratic $W^{1,2}$-norm and quartic $W^{1,4}$-norm is replaced by a distance with quadratic $W^{1,2}$-norm and the $2k$-th power of the $W^{1,2k}$-norm. The latter non-quadratic exponent is related to the Figalli--Zhang phenomenon \cite{Figalli2022}. Moreover, the distance here includes a factor $v^{-1}$, which accounts for the fact that the minimizers are not necessarily constants up to M\"obius transformations in a general conformal class, but could potentially be non-constant and non-unique even after conformal diffeomorphisms.

The exponents $2$ and $2k$ are optimal on the sphere. In general, the possible loss in the exponents in terms of $\gamma\geq 0$ is related to the very same phenomenon that Engelstein--Neumayer--Spolaor \cite{EnNeSp} observed in the setting of the Yamabe inequality, which is the case $k=1$ here. Their local argument combines a Lyapunov--Schmidt reduction with a {\L}ojasiewicz inequality. The same finite-dimensional mechanism applies here since $F_{k,g}$ is real-analytic in a $C^{2,\alpha}$-neighborhood of positive functions. A novel phenomenon in Theorem \ref{thm:1} is that this mechanism interacts with the nonlinear Figalli--Zhang phenomenon in terms of the $W^{1,2k}$-norm through the maximum $\max\{2k,2+\gamma\}$. An optimal {\L}ojasiewicz exponent $2+\gamma$ causes a lower bound on the sharp exponents in our stability result, which will be further discussed in Section~\ref{sec:5}. In Lemma \ref{lem:dist_notions} we will see that the distances in \eqref{eq:sigma_2} and \eqref{eq:Yamabe_1} are special cases of our notion of distance in Theorem \ref{thm:1}.

So far only compactness of almost minimizing critical points was known by Li and Li \cite{Li2005}. Our stability result improves this to compactness of almost minimizing admissible functions and gives a quantitative rate of convergence in terms of the deficit.

An interesting future direction, which is beyond the scope of this article, is to identify a conformal class for which a positive $\gamma$ is necessary. A promising candidate is given in Subsection~\ref{subsec:comp}, for which the required classification of $\sigma_k$-minimizers remains to be established. Although plausible, we do not claim here that the case $\gamma=0$ is generic in the sense of \cite{EnNeSp}. Their conclusion is based on a result by Schoen \cite{Sch91}, which is currently not available. Another interesting question concerns \cite{GW04}: The volume-normalized total $\sigma_k$-curvatures satisfy the following monotonicity formula
\begin{equation}
    \label{eq:monot}
(\mathcal F_{k}[g])^{\frac 1{n-2k}}\geq C([g],n,k,l) (\mathcal F_{l}[g])^{\frac 1{n-2l}}
\end{equation} for $0\leq l< k<n/2$, which are refinements of the $\sigma_k$-curvature inequalities we were looking at for each $l\geq 1$, so it remains open which stability behavior these $\sigma_k$-$\sigma_l$-curvature inequalities exhibit compared to the case $l=0$. A more structural open question is whether it is possible to extend Theorem \ref{thm:1} to metrics in $\Gamma_{k-1}^+$ as it was done in \cite{FrankPeteranderl2024_sigma2} for $k=2$ and the sphere; see also \cite{GWW26} for recent progress. Finally, it would be interesting to investigate the stability of reverse $\sigma_k$-curvature inequalities for $n/2<k\leq n$ as initiated in \cite{KP25}.

\subsection{Background from stability of functional inequalities}

Qualitative stability means compactness up to symmetries of optimizing sequences for a given functional inequality, whereas (quantitative) stability gives a control on the distance to the set of optimizers in terms of the deficit.  This question was raised for the Sobolev inequality by Brezis--Lieb \cite{BrLi} and answered affirmatively by Bianchi--Egnell \cite{BiEg} with an optimal power $2$ of the distance; see also \cite{Frank2024} for an introduction. Extensions of this result to the $p$-Sobolev inequality, $p\neq 2$, were developed in \cite{Figalli2019, Neu20} and completed with the optimal exponent $\max\{2,p\}$ by Figalli--Zhang \cite{Figalli2022}; see also \cite{Cianchi2009} for earlier stability results with respect to another distance. For other stability results exhibiting the Figalli--Zhang phenomenon of non-quadratic stability exponents, we refer to \cite{FrankPeteranderl2024_sigma2,KP25, WZ25,FPR2025, FLZ26, FrankLiYang2026}.

The general Bianchi--Egnell strategy is non-constructive: It is based on a two-step approach including a local second order analysis near the Aubin--Talenti minimizers \cite{Rod66, Aubin1976Sobolev,Ta} and a compactness-based global-to-local reduction \`a la Lions \cite{Lio85a,Lio85b}, which together give global stability by a standard contradiction argument. Dolbeault--Esteban--Figalli--Frank--Loss \cite{Dolbeault2025} viewed the global problem through a  monotone deformation via alternating competing symmetries and continuous Steiner symmetrization; see also \cite{Dolbeault_2025_2402} for a review, and \cite{chen_2024,chen_2025,chen_2025b,ChenLuTangWang2026}. The last of those articles provides a variant of this argument using the CR-Yamabe flow when rearrangement is unavailable.

On closed Riemannian manifolds, Engelstein--Neumayer--Spolaor \cite{EnNeSp} proved quantitative stability for the Yamabe inequality with possibly non-quadratic stability exponents in non-generic cases. We refer to \cite{andrade2024} for a recent extension to the Yamabe inequality for higher $Q$-curvatures, to \cite{BCV26,LinYu2026} for versions with manifolds with boundary, and to \cite{AikenBorquez2026} for a fractional setting. Frank \cite{Fr} analyzed this non-generic case for the critical cylinder and proved a quartic stability result; see also \cite{andrade2024} for the corresponding result for the $Q$-curvature functional and \cite{FrankPeteranderl2024_CKN} for similar observations in case of non-constant optimizers of the CKN-inequality on the Felli--Schneider curve.

\subsection{Comparison with the quantitative $\sigma_2$-curvature inequality on the sphere}\label{subsec:comp}

In \cite{FrankPeteranderl2024_sigma2} the above theorem for $k=2$ and $M$ the round sphere $\mathbb S^n$ was established. Following the Bianchi--Egnell scheme, the proof consisted of a global-to-local reduction and a local analysis. Both are based on refined strategies tailored for the specific problem. We discuss why both strategies fail in the context of higher $k$ and general manifolds. In the next subsection we then explain how to repair or replace them.

The first part was proved by a reduction to the compactness of the Yamabe inequality on the sphere via monotonicity \eqref{eq:monot}, which combined a tailored positive scalar integral density for the total $\sigma_2$-curvature over the sphere
$$ 
\left(\frac{4}{n-4} \right)^3 \left(\left(\frac{n-4}4\right)^2 u^{\frac{2n}{n-4}}\sigma_1(g_u)+\frac12 |\nabla u|^2 +\frac{n-2}{2}\left(\frac{n-4}{4}\right)^2 u^2 \right) |\nabla u|^2
+\frac{n(n-1)}{8}u^4$$
 found by Case \cite{Case2021} and the fact that both the volume-normalized total scalar curvature and $\sigma_2$-curvature have the same set of minimizing conformal metrics, namely with conformal factors given by constant functions up to M\"obius transformations. 

However, on general manifolds, the second variations of the Yamabe and the $\sigma_2$-curvature functionals do not need to be the same. For instance, a phase-plane analysis on the cylinder $\mathbb S^1(T)\times \mathbb S^{n-1}(1)$ with radius $T>0$ tells us that constant conformal factors are have nonnegative second variation of the volume-normalized total scalar curvature if and only if $T\leq (n-2)^{-1/2}$, where the critical threshold coincides with the occurrence of additional zero modes of the Hessian; cf.~\cite{Sc, Frank2024}. Since the corresponding threshold for $k=2$ is $(n-4)^{-1/2}$, we observe that in the regime $$\frac 1{\sqrt{n-2}}<T<\frac 1{\sqrt{n-4}}$$ constant functions are stable in the $\sigma_2$-setting but unstable in the Yamabe setting. This makes a reduction to the Yamabe inequality as in \cite{FrankPeteranderl2024_sigma2} for general manifolds unlikely.

The local part relies on an asymptotic decoupling of the spectral gap of the Hessian and the positivity of the scalar curvature, achieved via a local additive decomposition into spherical harmonics of the form 
$$u_j=1+r_j=1+y_j+z_j\,,$$ where $y_j$ are lower and $z_j$ are higher order spherical harmonics.

Already for $k=3$, the natural Gonz\'alez-type integration by parts approach produces a first Newton tensor $T_1$; see \cite{Gonzalez2005, MaWu2024} and \eqref{eq:Newton} for a definition of Newton tensors. This is not a scalar quantity determined by the $\sigma_k$-curvatures $\sigma_k(g_u)$ or $|\nabla u|^2$ anymore.
Indeed, note that for the matrix $A_L\coloneqq \mathrm{diag}(L,1,\dots,1)$, $L>0$, the Newton tensor is
$$T_1(A_L)=\sigma_1(A_L) \mathbbm 1-A_L=\mathrm{diag}(n-1, L+n-2,\dots, L+n-2)\,,$$ so evaluating $T_1(A_L)$ in two different directions can give different values. In Euclidean coordinates, the metric $u^{4k/(n-2k)} g_{Eucl}=\exp(-(Lx_1^2+x_2^2+\dots+x_n^2)) g_{Eucl}$ realizes $A_L$ as its Schouten tensor at the origin and is $k$-admissible in a neighborhood of that point. While a density only in terms of $\sigma_k$-curvatures and $|\nabla u|^2$ by further induction is possible, the previously described directional dependence arises naturally and makes a density with only positive coefficients unlikely.

\subsection{Proof strategy and structure}

In line with the Bianchi--Egnell method, we prove a local bound first, before deriving the global bound. In contrast to \cite{FrankPeteranderl2024_sigma2}, we use a formulation without sequences, which is more in line with the constructive approach in \cite{Dolbeault2025}.

In Section \ref{sec:3} we prove the following local version of Theorem \ref{thm:1}.

\begin{proposition}
    [Local bound]\label{prop:loc} Let $2\leq k< n/2$ and $(M^n,[g])$ be a smooth, closed, connected, locally conformally flat Riemannian manifold with $k$-admissible metric $g$. There are $\gamma\geq 0$ and constants $c_{\loc},\rho_{\loc}>0$ such that every $u\in \mathcal C_k([g])$
    with 
$$\dist_{k,\gamma}(u)\leq \rho_\loc$$
    satisfies $$F_{k,g}[u]-Y_k\geq c_{\loc} \dist_{k,\gamma}(u)\,.$$
\end{proposition}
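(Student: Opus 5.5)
Here $\dist_{k,\gamma}(u)$ is understood as the infimum in Theorem~\ref{thm:1}:
\[
\inf_{\lambda,\Psi,v}\Bigl(\|r\|_{W^{1,2}(g_v)}^{2+\gamma}+\|r\|_{W^{1,2k}(g_v)}^{\max\{2k,2+\gamma\}}\Bigr),\qquad r=\lambda v^{-1}(u)_\Psi-1 .
\]

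\textbf{Normalization.} First I reduce to a fixed minimizer. Proposition~\ref{prop:cpct} gives compactness of $\mathcal M_k$, and together with elliptic regularity for the $\sigma_k$-Yamabe equation this makes $\mathcal M_k$ compact in $C^{2,\alpha}$ modulo $G$. Hence all norms $\|\cdot\|_{W^{1,p}(g_v)}$ are uniformly equivalent as $v$ ranges over $\mathcal M_k$. Take $u$ with small distance and choose $(\lambda,\Psi,v)$ nearly attaining the infimum. By invariance, $F_{k,g}[u]=F_{k,g}[(u)_\Psi]=F_{k,g_v}[\lambda v^{-1}(u)_\Psi]$, using the conformal covariance $F_{k,g}[vw]=F_{k,g_v}[w]$. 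So I may assume $u=1+r$ relative to $g_v$, with $r$ small in $W^{1,2}\cap W^{1,2k}$. Because the infimum is essentially attained, $r$ satisfies orthogonality-type conditions.

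\textbf{Expansion.} Next I expand the deficit. Local conformal flatness and the divergence structure of the Newton tensors $T_j$ let me write $\int\sigma_k(g_{1+r})\,d\vol$ as an exact polynomial in $r$ and $\nabla r$ after integration by parts. I split it as
\[
F_{k,g_v}[1+r]-Y_k=\tfrac12 Q_{g_v}[r]+\mathcal R[r].
\]
Here $\mathcal R$ consists of cubic-and-higher terms. I aim for two bounds. The first is a good lower bound from the top-order gradient term, of the form
\[
\mathcal R[r]\ \ge\ c\int (1+r)^{\dots}|\nabla r|^{2k}-C\bigl(\|r\|_{W^{1,2}}^2\,\epsilon+\cdots\bigr).
\]
The idea is to peel off the positive pure-gradient term $|\nabla r|^{2k}$ that arises from $\sigma_k$ of $\nabla^2$ plus $d r\otimes dr$ parts and to bound mixed terms by Young's inequality. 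The second is an error bound by $o(\|r\|_{W^{1,2}}^2)+o(\|\nabla r\|_{2k}^{2k})$. I expect this to be the Figalli--Zhang-type step.

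Admissibility of $g_{1+r}$ enters here. It is needed to control terms involving $\nabla^2 r$ without $C^2$-smallness, via G\aa rding-cone inequalities such as positivity of $T_{j}$ for $j<k$ and the Newton--Maclaurin inequalities. I expect this to be the main obstacle, because only $W^{1,2k}$-smallness is available and $\nabla^2 r$ is not small. I would try integrating by parts so that second derivatives appear only in terms like $\langle T_{j}(g_u)\nabla r,\nabla r\rangle$, which are nonnegative by admissibility, and then drop or exploit them.

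\textbf{Quadratic part, non-degenerate case.} In the non-degenerate case, $Q_{g_v}$ is a self-adjoint elliptic second-order operator with coercivity on the complement of $\ker Q_{g_v}=v^{-1}T_v(\R_+\mathcal M_k)$. The almost-minimality of the chosen $(\lambda,\Psi,v)$ makes $r$ almost orthogonal to this kernel up to quadratic errors, so $Q_{g_v}[r]\ge c\|r\|_{W^{1,2}}^2$. Combining this with the gradient bound from the expansion yields the claim with $\gamma=0$.

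\textbf{Degenerate case.} Here I follow Engelstein--Neumayer--Spolaor. I perform a Lyapunov--Schmidt reduction of the Euler--Lagrange map in $C^{2,\alpha}$, splitting $r=y+z$ with $y\in\ker Q$ finite-dimensional. The reduced functional $f(y)=F[1+y+\phi(y)]$ is real-analytic, since $F_{k,g}$ is analytic near positive functions. The {\L}ojasiewicz inequality then gives
\[
f(y)-Y_k\ge c\,\dist(y,\{f=Y_k\})^{2+\gamma},
\]
and the zero set of $f$ corresponds to $\mathcal M_k$ locally. For the transverse part, coercivity of $Q$ on $z$ controls $\|z-\phi(y)\|_{W^{1,2}}^2$. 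To apply this I must first pass from $W^{1,2k}$-closeness to the $C^{2,\alpha}$ setting. I would do this through a splitting into low and high frequencies, with a spectral projection of $Q$ and the low part smooth. The high part is handled by the coercive estimate plus the $|\nabla r|^{2k}$ term.

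Summing the three contributions gives $\|r\|_{W^{1,2}}^{2+\gamma}+\|r\|_{W^{1,2k}}^{\max\{2k,2+\gamma\}}$ up to constants, once $\rho_{\loc}$ is chosen small.
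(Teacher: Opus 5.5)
Your overall architecture matches the paper's: finitely many charts around minimizers modulo $G$, a Lyapunov--Schmidt reduction over $\ker Q_{g_v}$, the {\L}ojasiewicz inequality for the reduced functional, and a split into a smooth finite-dimensional low part plus a high-frequency remainder. The gap is the step you yourself flag as the main obstacle. That step carries the whole proof, and the mechanism you propose for it would not work.

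\textbf{Why the expansion fails.} Integration by parts does not make the total $\sigma_k$-curvature first order. Already for $k=2$ on a flat background, in the parametrization $V$, a weighted term $\int V^{1-n}\Delta V\,|\nabla V|^2$ survives when $n\neq 4$; this is consistent with Case's density in Subsection~\ref{subsec:comp}, which contains $\sigma_1(g_u)|\nabla u|^2$. For $k\geq 3$, Newton tensors survive. So $\mathcal R[r]$ contains $\nabla^2 r$, while you only control $r$ in $W^{1,2k}$ with $2k<n$. Then $r$ is not small in $L^\infty$, and $1+r$ has no uniform positive lower bound. Cubic terms such as $\int r|\nabla r|^2$, let alone those containing $\nabla^2 r$, are therefore not $o(\|r\|_{W^{1,2}}^2)$. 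The weight in your $\int(1+r)^{\dots}|\nabla r|^{2k}$ is uncontrolled, and Young's inequality alone cannot close this. There are three further problems:
\begin{itemize}
\item Taylor remainders are evaluated at points $1+\theta r$ of an additive path in $u$. These need not be $k$-admissible, so positivity of $T_j$ is unavailable there.
\item With an additive split $1+\xi+\Phi_g(\xi)+z+y$, the high-frequency piece alone is not an admissible conformal factor.
\item The cross terms between the graph and $y$ are never handled.
\end{itemize}

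\textbf{What the paper uses instead.} The missing idea is the exact, sign-definite Taylor formula behind Proposition~\ref{prop:glob_W12k}.
\begin{itemize}
\item In the inverse parametrization $V=u^{-q/n}$, the affine path $V_t=1+t(V-1)$ has $B_g(V_t)$ equal to a nonnegative combination of $g^{-1}A_g$, $B_g(V)$ and $\mathbbm 1$. Hence the path stays in $\Gamma_k^+$.
\item Then $\mathcal E(1)-\mathcal E(0)-\mathcal E'(0)=\int_0^1(1-t)\mathcal E''(t)\,\mathrm dt$. By \eqref{eq:E''}, second derivatives enter only through the positive definite $T_{k-1}(B_g(V_t))$.
\item Superadditivity (Lemma~\ref{lem:superadd}) gives $T_{k-1}(B_g(V_t))\geq T_{k-1}(X)+\binom{n-1}{k-1}s^{k-1}\mathbbm 1$ with $s\propto|\nabla V|^2$.
\item One-dimensional integral bounds, valid for every value of $V$, then yield $\int|\nabla u|^{2k}+C_g\int|\nabla u|^2$ with no smallness assumption at all.
\end{itemize}
To apply this to the high frequencies, the paper needs three more ingredients:
\begin{itemize}
\item the multiplicative decomposition $u=cv(\alpha_g(\xi)+z)(1+\eta)$ of Lemma~\ref{lem:decomp}, so that $1+\eta\in\mathcal C_k([\tilde g])$;
\item the Harnack bound of Lemma~\ref{lem:Har}, obtained from positive scalar curvature, to compare first variations in $V$ and in $\eta$;
\item a large spectral cutoff $\Lambda$ to absorb the negative $\|y\|_{L^2}^2$ term.
\end{itemize}
Thus the $W^{1,2}$-coercivity of the high part comes from $T_{k-1}(g^{-1}A_g)$, not from $Q_g$ plus a small remainder. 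The mixed term is of higher order because $D_{\alpha_g(\xi)}F_{k,g}$ vanishes on $K_g^\perp$ (Lemma~\ref{lem:mix}).

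\textbf{Orthogonality conditions.} You propose to extract orthogonality from near-optimal $(\lambda,\Psi,v)$. This is problematic when $\mathcal M_k$ is not a manifold and the distance mixes $W^{1,2}$ and $W^{1,2k}$ norms. The paper avoids it: it decomposes around a fixed chart center and bounds $\dist_{k,\gamma}(u)$ from above using the competitor $v\alpha_g(\zeta)$, where $\zeta$ is a nearest zero of $\Sigma_g-Y_k$.
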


As needed for the proof of Theorem \ref{thm:1}, the constants $c_\loc$, $\rho_\loc$, and $\gamma$ depend on $M$, $[g]$, and $k$. The quantity $\dist_{k,\gamma}$ is defined in \eqref{eq:dist_def}.

We prove the local bound by combining the analytic reduction with a multiplicative splitting. At a fixed normalized minimizer $v$ relative to the background metric $g_0$, put $g=g_v=v^{2q/n}g_0$. Lyapunov--Schmidt reduction gives an analytic graph $\alpha_g(\xi)$ over the normalized kernel of the conformal Hessian. The reduced functional $\Sigma_g(\xi)$ controls the distance from $\xi$ to the zero set of $\Sigma_g(\xi)-Y_k$, which parametrizes minimizers up to normalization.

An almost minimizer $u\in \mathcal C_k([g_0])$ admits the decomposition $$u=cv(\alpha_g(\xi)+z+y)=cv(\alpha_g(\xi)+z)(1+\eta)\,,$$ where $z$ contains finitely many frequencies of the Hessian at $g$, $y$ contains the high frequencies, and $\eta\coloneqq y/(\alpha_g(\xi)+z)$. Denote $\tilde g\coloneqq (\alpha_g(\xi)+z)^{2q/n}g$. This yields an exact splitting into a low frequency, a medium frequency, a high frequency, and a mixed frequency contribution given by
\begin{align*}
    F_{k,g_0}[u]-Y_k=&\ F_{k,g}[\alpha_g(\xi)]-Y_k+(F_{k,g}[\alpha_g(\xi)+z]-F_{k,g}[\alpha_g(\xi)]) \notag\\&+ \left(F_{k,\tilde g}[1+\eta]-F_{k,\tilde g}[1]-D_1F_{k,\tilde g}[\eta]\right)+D_{\alpha_g(\xi)+z}F_{k, g}[y]\,.\end{align*}  The Hessian gives a quadratic control on $z$ and the low frequency part. A superadditivity argument for the Newton tensor $T_{k-1}$ along an affine path of a reparametrized conformal factor controls the high frequency part by $\|y\|_{W^{1,2}(g)}^2+\|y\|_{W^{1,2k}(g)}^{2k}$. The {\L}ojasiewicz inequality then gives the powers $2+\gamma$ and $\max\{2k,2+\gamma\}$. Fixed spectral decompositions in finitely many neighborhoods give uniform constants -- even if the kernel dimension varies between minimizers.

    In terms of this decomposition, after removing the zero modes of the Hessian, we develop separate bounds for the low, medium, and high frequencies.

The global bound is then obtained in Section \ref{sec:4}. The proof is based on a non-constructive consequence of the constructive flow-type argument, developed by Dolbeault--Esteban--Figalli--Frank--Loss \cite{Dolbeault2025}, applied to the logarithmic Guan--Wang flow $g_t$ in \cite{GuanWangFlow2003,GW04} starting at an almost-minimizing sequence given by 
$$\frac {d}{dt} g_t=-(\log(\sigma_k(g_t))-\log(r_k(g_t)))g_t\,,$$ where $\log(r_k(g_t))$ is the volume-normalized mean of $\log(\sigma_k(g_t))$. The strategy to consider a parabolic flow to reach closer to a neighborhood of the minimizer was also used in \cite{ChenLuTangWang2026} before. The Guan--Wang flow satisfies several decisive properties such as global existence, conservation of $k$-admissibility and volume, and monotonicity of the total $\sigma_k$-curvature.

Since our proof is based on a compactness result for almost minimizing critical points, a constructive proof with explicit dimensional control on the stability constant is beyond the scope of our present work. As a consequence of our stability result, we can in fact promote this compactness (up to symmetries) of almost minimizing critical points from \cite{Li2005} to compactness (up to symmetries) of general almost minimizing admissible functions. 

\begin{corollary}
    [Compactness of almost minimizers]\label{cor:glob2loc} Under the assumptions of Theorem \ref{thm:1}, let $(u_j)\subset \mathcal C_k([g_0])$ satisfy $$F_{k,g_0}[u_j]\to Y_k(M,[g_0])\qquad \text{and}\qquad \|u_j\|_{L^q(g_0)}=1\,.$$
    Then $$\inf_{\Psi\in G,v\in \mathcal M_k} \|v^{-1}(u_j)_\Psi-1\|_{W^{1,2k}(g_v)}\to 0\,.$$
\end{corollary}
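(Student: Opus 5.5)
The plan is to deduce the corollary directly from Theorem \ref{thm:1}, after which only the scaling parameter $\lambda$ has to be removed. Set $\delta_j\coloneqq F_{k,g_0}[u_j]-Y_k\to 0$. For each $j$, Theorem \ref{thm:1} provides $\lambda_j>0$, $\Psi_j\in G$ and $v_j\in\mathcal M_k$ (near-optimal in the infimum) with
\begin{equation*}
\|\lambda_j v_j^{-1}(u_j)_{\Psi_j}-1\|_{W^{1,2k}(g_{v_j})}^{\max\{2k,2+\gamma\}}\leq 2c^{-1}\delta_j+\tfrac1j\,,
\end{equation*}
so that $w_j\coloneqq \lambda_j v_j^{-1}(u_j)_{\Psi_j}$ satisfies $\|w_j-1\|_{W^{1,2k}(g_{v_j})}\to0$. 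It therefore suffices to show $\lambda_j\to1$: then
\begin{equation*}
\|v_j^{-1}(u_j)_{\Psi_j}-1\|_{W^{1,2k}(g_{v_j})}\leq \lambda_j^{-1}\|w_j-1\|_{W^{1,2k}(g_{v_j})}+|\lambda_j^{-1}-1|\,\|1\|_{W^{1,2k}(g_{v_j})}\,,
\end{equation*}
and $\|1\|_{W^{1,2k}(g_{v_j})}=\vol_{g_{v_j}}(M)^{1/2k}=\|v_j\|_{L^q(g_0)}^{q/2k}=1$.

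To prove $\lambda_j\to1$, use the normalizations. Since $(\cdot)_\Psi$ preserves the $L^q(g_0)$-norm, $\|(u_j)_{\Psi_j}\|_{L^q(g_0)}=1$. Because $\mathrm d\vol_{g_{v}}=v^q\,\mathrm d\vol_{g_0}$,
\begin{equation*}
\int_M |w_j|^q\,\mathrm d\vol_{g_{v_j}}=\lambda_j^q\int_M (u_j)_{\Psi_j}^q\,\mathrm d\vol_{g_0}=\lambda_j^q\,,
\end{equation*}
that is, $\lambda_j=\|w_j\|_{L^q(g_{v_j})}$, while $\|1\|_{L^q(g_{v_j})}=1$. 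As $q=\frac{2kn}{n-2k}$ is exactly the Sobolev exponent of $W^{1,2k}$ in dimension $n$ (and $2k<n$), the Sobolev embedding yields $\big|\lambda_j-1\big|\leq \|w_j-1\|_{L^q(g_{v_j})}\leq C_j\|w_j-1\|_{W^{1,2k}(g_{v_j})}$.

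The only real obstacle is uniformity of the Sobolev constant $C_j$ in the metric $g_{v_j}$, as $v_j$ ranges over $\mathcal M_k$. I would use Proposition \ref{prop:cpct} (compactness of the set of normalized minimizers, in $C^{2,\alpha}$ say, possibly after composing with conformal diffeomorphisms, which may be absorbed into $\Psi_j$ using $((u)_{\Psi_1})_{\Psi_2}=(u)_{\Psi_1\circ\Psi_2}$ and $F$-invariance). Then the $v_j$ are bounded above and below by positive constants and have bounded derivatives, so the metrics $g_{v_j}$ are uniformly equivalent to $g_0$ and the Sobolev norms with respect to $g_{v_j}$ and $g_0$ are comparable with uniform constants. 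If $\mathcal M_k$ were only compact modulo $G$ (the sphere case), one reparametrizes $v_j=(\tilde v_j)_{\Phi_j}$ with $\tilde v_j$ in a compact set and checks that the distance expression is invariant under simultaneously transforming $u$ and $v$ by $\Phi_j$, since $g_{(v)_\Phi}=\Phi^*g_v$ and $v^{-1}(u)_\Phi$ transforms as a scalar under pullback. Having done this, $C_j\leq C$, so $\lambda_j\to1$ and the corollary follows.
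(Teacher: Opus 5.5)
Your proposal is correct and follows the paper's proof essentially line by line. The quantitative bound gives $\dist_{k,\gamma}(u_j)\to0$; the paper cites \eqref{eq:quant_stab0}, you cite Theorem \ref{thm:1}, and either works here. Normalization plus the Sobolev inequality then gives $|\lambda_j-1|\le\|\lambda_jv_j^{-1}(u_j)_{\Psi_j}-1\|_{L^q(g_{v_j})}\to0$, and $\vol_{g_{v_j}}(M)=1$ lets you remove $\lambda_j$ by the triangle inequality. Your explicit treatment of the uniform Sobolev constant, which the paper takes for granted here, is valid.
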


Finally, we prove sharpness of the exponents in Section \ref{sec:5}.

\section{Preliminaries}
\label{sec:2}
In this section we record notions from conformal geometry and prove related  compactness results and global bounds. Here local conformal flatness enters as the Newton tensor is divergence free facilitating integration by parts and through applications of results by Guan--Wang \cite{GW04} and Li--Li \cite{Li2005}. 

In the following, we write $p\in\{2,2k\}$ if statements hold for both norms. We fix a background metric $g_0$ (denoted $g$ in the introduction). The metric $g_0$ is also used as the initial metric of a flow. Unless a change of reference is stated, the set $\mathcal M_k$ consists of conformal factors relative to $g_0$ and $g_v= v^{2q/n}g_0$; in particular, $v\in \mathcal M_k$ always implies $\vol_{g_v}(M)=1$. We use $\|f\|_{W^{1,p}(h)}^p=\int_M (|\nabla_h f|_h^p+|f|^p)\, d\vol_h$ and the convention $\Delta_h=\Div_h\nabla _h$. Applying the Newton tensor, we also specify the metric used. If the index is suppressed, then the background metric is considered.

Positive constants $c,C$ may change from line to line. Unless a more precise dependence is stated, local constants depend on the fixed smooth background metric, $n$, $k$, H\"older exponents, the order of regularity, the chosen finite-dimensional chart, and the spectral cutoff, when used.

\subsection{Parametrizations of the conformal factor}

Similarly to the Yamabe and the $\sigma_2$-parametrizations of the conformal factor in \cite{FrankPeteranderl2024_sigma2}, we use a different parametrization depending on the context. 

Our standard parametrization for the $\sigma_k$-problem is $$g_u=u^{\frac{2q}n} g_0$$ with $q=2kn/(n-2k)$, which turns the total $\sigma_k$-curvature into a $2k$-homogeneous functional in $u$ and the volume into the $q$-th power of the $L^q(g_0)$-norm of $u$, and thus the $\sigma_k$-curvature inequality into a $2k$-Sobolev-type inequality. Since we rely on a consequence of positive scalar curvature in Lemma \ref{lem:Har}, we use the Yamabe parametrization $$u^{\frac {4k}{n-2k}}=W^{\frac 4{n-2}} \,.$$ For most computations involving the Schouten tensor, the inverse parametrization $$u^{\frac {4k}{n-2k}}=V^{-2}$$ is convenient as the term $d u\otimes d u$ disappears in the transformation formula under conformal changes; see \cite{Gonzalez2005, MaWu2024}. Indeed, while for $g_u=u^{2q/n}g$ the transformation is $$g_u^{-1}A_{g_u}=u^{-\frac{2n}{n-2k}}g^{-1}\left(u^2A_g-\frac{2k}{n-2k} u\nabla ^2 u+\frac{2kn}{(n-2k)^2} d u\otimes d u -\frac{2k^2}{(n-2k)^2}|\nabla u|^2 g\right),$$
we write \begin{equation}
    \label{eq:Schouten_V}B_g(V)\coloneqq (V^{-2}g)^{-1}A_{V^{-2}g}= V^2 g^{-1}A_g+V g^{-1}\nabla ^2V-\frac{1}2 |\nabla V|^2\mathbbm 1\,.
\end{equation}
The latter is used for our affine interpolation in the local analysis. These transformation formulas can be obtained from the more standard formulation in logarithmic parametrization; see \cite{LeeParker1987}, for instance.

\subsection{The conformal Hessian}\label{subsec:conf_Hess} When differentiating the functional $F_{k,g_0}$, we have to ensure that the perturbations stay in the conformal class. This leads to the notion of \textit{conformal Hessian}.

To this end, we define the \textit{Newton tensor} 
\begin{equation}\label{eq:Newton}
    T_j(A)\coloneqq \frac{\partial \sigma_{j+1}}{\partial A}= \sum_{l=0}^j (-1)^l \sigma_{j-l}(A)A^l\,,\qquad T_0\coloneqq \mathbbm 1
\end{equation}
for a symmetric matrix $A$. It satisfies $$\tr T_j(A)=(n-j)\sigma_j(A)\qquad \text{and}\qquad \tr(T_{j-1}(A)A)=j\sigma_j(A)$$ and, if $A\in \Gamma_k^+$, then $\sigma_j(A)>0$ and $T_{j-1}(A)$ is positive definite for all $1\leq j\leq k$; see \cite[Lemma 4.2]{Gonzalez2005}, for instance, and also \cite{Garding1959} for the general calculus. Moreover, $T_j(g^{-1}A_g)$ is divergence free with respect to $g$ on a locally conformally flat manifold; see \cite[Lemma 4.3]{Gonzalez2005} and \cite[p.~306, 307]{Via00}.

The first conformal variation tells us that the critical metrics of the volume-normalized total $\sigma_k$-curvature are the metrics with constant $\sigma_k$-curvature so they solve the $\sigma_k$-Yamabe problem; see \cite[Theorem 13]{Via00} for $k\neq n/2$. Following Viaclovsky \cite{Via00}, we derive the second conformal variation of the total $\sigma_k$-curvature in the parametrization $V^{-2}g$, $V>0$, for metrics $g$ that are critical in the sense that their $\sigma_k$-curvature is constant. 

We vary a constant $\sigma_k$-curvature metric by inserting \begin{equation}\label{eq:Vt}
    V_t\coloneqq (1+t\phi)\qquad \text{with}\quad g_t\coloneqq V_t^{-2}g=(1+t\phi)^{-2}g
\end{equation} into the total $\sigma_k$-curvature and differentiating twice. Note that this covers all perturbations of the form $\tilde V_t=V(1+t\phi)$ as we can just absorb $V$ into the background metric $g=g_0$ in computations. Note further that we choose $|t|$ sufficiently small, depending on $\phi\in C^\infty(M)$, so that $V_t>0$.

We differentiate \eqref{eq:Schouten_V} for $V=V_t$. Then $$B_t\coloneqq B_{g}(V_t)=V_t^{2}g^{-1} A_g+tV_tg^{-1}\nabla^2 \phi-\frac {t^2}2|\nabla \phi|_g^2\mathbbm 1\,,$$ where $|\nabla \phi|_g^2\coloneqq g(\nabla \phi,\nabla \phi)$. We drop the $g$ in this notation if there is no confusion. Since $\dot g_t=-2\phi V_t^{-1}g_t\eqqcolon -2 \psi_t g_t$, differentiation gives
\begin{equation}
    \label{eq:BandVol}
\dot B_t =2 \psi_t B_t+g_t^{-1}\nabla_{g_t}^2 \psi_t\qquad \text{and}\qquad \frac{d}{dt} (\mathrm d\vol_{g_t})=-n \psi_t\, \mathrm d \vol_{g_t}\,.\end{equation} Hence, $D\sigma_k(A)[B]=\tr(T_{k-1}(A)B)$ implies
\begin{equation}
    \label{eq:sigmak_der1}
    \frac{d}{dt}\sigma_k(g_t)= 2k\psi_t \sigma_k(g_t)+\tr(T_{k-1}(B_t)g_t^{-1}\nabla_{g_t}^2 \psi_t)\,.
\end{equation}
Denote \begin{equation*}
    \mathcal E(t)\coloneqq \int_M \sigma_k(g_t)\, \mathrm d \vol_{g_t}\,.
\end{equation*} Apply \eqref{eq:BandVol} and \eqref{eq:sigmak_der1} to recover the first variation
\begin{equation}
    \label{eq:E'}
    \mathcal E'(t)= -(n-2k) \int_M \psi_t\sigma_k(g_t)\, \mathrm d \vol_{g_t}\,,
\end{equation} where we integrated by parts using that the Newton tensor is divergence-free. As $\partial_t\psi_t= -\psi_t^2$, differentiating once more gives \begin{equation}
    \label{eq:E''}
    \mathcal E''(t)=(n-2k)\left(\int_M T_{k-1} (B_t)(\nabla_{g_t} \psi_t,\nabla_{g_t} \psi_t)_{g_t} \, \mathrm d\vol_{g_t}+(n-2k+1) \int_M\psi_t^2 \sigma_k(g_t)\,\mathrm d \vol_{g_t}\right)\,.
\end{equation} Both terms are nonnegative whenever $g_t$ is $k$-admissible. Similarly, we compute the derivatives of $\mathcal V (t)\coloneqq \vol_{g_t}(M)$ and find
$$\mathcal V' (t)= -n\int_M\psi_t \, \mathrm d \vol_{g_t}\qquad \text{and}\qquad \mathcal V'' (t)= n(n+1)\int_M\psi_t^2\, \mathrm d \vol_{g_t}\,.$$
Then \begin{align*}
    \frac{ d^2}{dt^2}\mathcal F_k[g_t]=\ & \mathcal V(t)^{-\frac{n-2k}n}\bigg(\mathcal E''(t) -2\frac{n-2k}{n} \mathcal V(t)^{-1}\mathcal V'(t)\mathcal E'(t)\\&+\frac{n-2k}{n}\left(\frac{n-2k}{n}+1\right) \mathcal V(t)^{-2}(\mathcal V'(t))^2\mathcal E(t)-\frac{n-2k}{n}\mathcal V(t)^{-1}\mathcal V''(t)\mathcal E(t)\bigg)
   .
\end{align*}
Now we use the assumption that $\sigma_k(g)\equiv c_g$ to evaluate the second variation at $t=0$. Writing $\bar \phi_g\coloneqq \vol_g(M)^{-1}\int_M\phi\,\mathrm d\vol_g$ gives the conformal Hessian in the inverse parametrization \begin{equation}
 \frac{ d^2}{dt^2}\mathcal F_k[g_t]\Big|_{t=0}= \frac{n-2k}{\vol_{g}(M)^{\frac {n-2k}n}}\bigg(\int_M T_{k-1} (g^{-1}A_g)(\nabla \phi,\nabla \phi)_g \, \mathrm d\vol_{g}-2kc_g\int_M(\bar \phi_g-\phi)^2\,\mathrm d \vol_{g} \bigg).\label{eq:F''0}
\end{equation} By the chain rule the Hessian of $F_{k,g}$ is given by $$Q_g(\phi)\coloneqq \left(\frac{2k}{n-2k}\right)^2 \frac{ d^2}{dt^2}\mathcal F_k[g_t]\Big|_{t=0}\,.$$
We also use $Q_g$ for its associated self-adjoint operator when writing $Q_g\phi$, its kernel, or its spectrum, for instance. As $g$ is $k$-admissible, $T_{k-1}(g^{-1}A_g)$ is positive definite and thus elliptic. Since the Euler--Lagrange equation associated to \eqref{eq:F''0} is of divergence form and $T_{k-1}(g^{-1}A_g)$ is symmetric, it is in particular self-adjoint. A self-adjoint, elliptic operator bounded from below on a compact manifold has a discrete spectrum with finite-dimensional eigenspaces, and its eigenvalues $\lambda_1\leq \lambda_2\leq \dots\leq \lambda_j\leq \dots$, counted with multiplicities, tend to infinity as $j\to \infty$; see \cite[Chapter~III, Theorem~5.8]{LawsonMichelsohn1989}, for instance, and also \cite{Garding1953Spectral, Hormander1968}. Note that $\lambda_1\geq 0$ for the Hessian at a minimizer. Moreover, on the orthogonal complement of its kernel, ellipticity and discreteness give a positive spectral gap. This did not use the assumption of non-degeneracy of $\mathcal M_k$. 

We will also need the first variation with respect to the standard parametrization $g_{u_t}$ with $u_t=1+t\phi$ given by 
\begin{equation}
    D_1F_{k,g}[\phi]=2k \frac1{\vol_g(M)^{\frac{n-2k}n}} \int_M \phi\left( \sigma_k(g)-\frac 1{\vol_g(M)}\int_M\sigma_k(g)\,\mathrm d\vol_g\right) \,\mathrm d\vol_g\,.\label{eq:mix_exp}
\end{equation}

We note that in case $G$ is not compact, then $M$ is conformally equivalent to the round sphere of some constant sectional curvature $\kappa$ by the Ferrand--Obata theorem \cite{Fer96,Oba71}, and the set of minimizers is non-degenerate. Indeed, passing to the round metric, the set of minimizers $\mathcal M_k$ consists of a normalized constant up to M\"obius transformations, the tangent space of $\R_+\mathcal M_k$ at a constant function consists of the spherical harmonics of degree $0$ and $1$, and the Hessian at a constant has a positive gap on the $L^2(g_*)$-orthogonal complement of these modes. To see the latter, we note that $g_*^{-1}A_{g_*}=\frac{\kappa}{2} \mathbbm 1$ and $$T_{k-1}(g_*^{-1}A_{g_*})=\binom{n-1}{k-1} \left(\frac {\kappa} 2\right)^{k-1}\mathbbm 1\qquad \text{and}\qquad \sigma_k(g_*^{-1}A_{g_*})=\binom{n}{k} \left(\frac {\kappa} 2\right)^{k}\,.$$ Therefore, we can write \eqref{eq:F''0} as \begin{equation*}
    \frac{d^2}{dt^2}\mathcal F_k[g_t]\Big|_{t=0}=\frac{n-2k}{\vol_{g_*}(M)^{\frac{n-2k}n}}\binom{n-1}{k-1}\left(\frac{\kappa}2\right)^{k-1}\left(\int_M |\nabla \phi|^2 \,\mathrm d\vol_{g_*}-n\kappa \int_M(\phi-\bar \phi_{g_*})^2\,\mathrm d\vol_{g_*}\right).
\end{equation*} Note that the Hessian coincides with the Hessian of the Yamabe inequality on the round sphere of curvature $\kappa$ up to a positive multiplicative constant. If we perturb along volume normalized metrics, then $\bar \phi_{g_*}=0$ holds. 

Let us now say a few words on the special case of the round sphere. As the Laplace--Beltrami operator acts on the space $\mathcal H_\ell$, $\ell\geq 0$, of spherical harmonics of degree $\ell$  via $-\Delta_{g_*}|_{\mathcal H_l}=\kappa \ell(\ell+n-1)$, the operator $-\Delta_{g_*}-n\kappa$ admits the eigenvalues $$\kappa (\ell (\ell+n-1)-n)>0$$ for $\ell\geq 2$, $0$ for $\ell=1$, and $-n\kappa$ for $\ell=0$. By the correction in terms of the mean $\bar \phi_g$ in \eqref{eq:mix_exp}, the Hessian vanishes on constants. Hence, there is a constant $C=C(n,k,\kappa)>0$ such that
\begin{equation}\label{eq:HessQ}
    Q_{g_*}(\phi)\geq C \|\phi\|_{W^{1,2}(g_*)}^2\,,\qquad \phi\in (\mathcal H_0\oplus \mathcal H_1)^\perp\,,
\end{equation} where $\perp$ denotes $L^2(g_*)$-orthogonality.

To prove non-degeneracy on the sphere, we are left to show that 
$$T_1(\R_+\mathcal M_k)=\mathcal H_0\oplus \mathcal H_1\,.$$ Differentiating a minimizer $c(1)_\Psi$  in the parameter $c$ gives $\mathcal H_0$, so we are left to prove $T_c\mathcal M_k=\mathcal H_1$ for $c$ being the volume normalization $\vol_{g_*}(\mathbb S^n)^{-1/q}$. Recall from \cite{FrankPeteranderl2024_sigma2, FPR2025} that any M\"obius transformation $\Psi$ can be written as a rotation followed by $\Psi_\xi$, $\xi\in B^{n+1}$, which is given by \begin{equation}\label{eq:Mob_param}\Psi_\xi(\omega)\coloneqq \frac{(1-|\xi|^2)\omega-2(1-\xi\cdot \omega)\xi}{1-2\xi\cdot\omega+|\xi|^2}
\end{equation}
with Jacobian \begin{equation}\label{eq:conf_Jac}
    J_{\Psi_\xi}(\omega)^{\frac 1n}=\frac{1-|\xi|^2}{1-2\xi\cdot\omega+|\xi|^2}=1+2\xi\cdot\omega +O(|\xi|^2)\,.
\end{equation}
Recalling \eqref{eq:conf} we note that $\partial_t(1)_{\Psi_{t\xi}}|_{t=0}$ is proportional to $\xi\cdot \omega$, which spans $\mathcal H_1$.

\subsection{Analytic reduction: Lyapunov--Schmidt and the {\L}ojasiewicz inequality}\label{subsec:LSL}
Let $v\in \mathcal M_k$, $g=g_v$, $m\in \N_0$, and $0<\alpha<1$. Define $$N^{m,\alpha}_g\coloneqq \left\{\phi\in C^{m,\alpha}(M):\int_M \phi \,\mathrm d\vol_g=0\right\}.$$ 

We investigate how $F_{k,g}$ behaves under perturbations of $0$ in $N_g^{2,\alpha}$. Since the numerator of $F_{k,g}[1+\phi]$ is a polynomial in $1+\phi$, $\nabla \phi$, and $\nabla^2\phi$, and its denominator is analytic as long as $1+\phi>0$, this functional is real analytic on a sufficiently small $C^{2,\alpha}(M)$-neighborhood of $0$. Similarly, its Euler--Lagrange equation is analytic as a map from $C^{2,\alpha}(M)$ to $C^{0,\alpha}(M)$; cf.~\eqref{eq:mix_exp}. Thus, the Lyapunov--Schmidt reduction is applicable; cf.~\cite{Sim83, EnNeSp}: A perturbation of a critical point is split into a second order contribution and a finite dimensional subspace. The latter is treated using an inequality by {\L}ojasiewicz \cite{Loj65}.

Denote by $$K_g\coloneqq (\ker Q_g) \cap N_g^{m,\alpha}$$ and by $K_g^\perp$ its $L^2(g)$-orthogonal complement with mean zero. Note that $K_g$ is finite-dimensional. Indeed, by elliptic regularity there is a $C>0$ such that $$\|\phi\|_{W^{2,2}(g)}\leq C(\| Q_g\phi\|_{L^2(g)}+\|\phi\|_{L^2(g)})$$ (see also \cite[p.~9]{EnNeSp}), so restricted to $K_g$ the $W^{2,2}(g)$- and $L^2(g)$-norms are equivalent, which by compactness of $W^{2,2}(g)\to L^2(g)$ implies that $K_g$ is finite-dimensional.

First, we provide the Lyapunov--Schmidt reduction.

\begin{lemma}[Lyapunov--Schmidt reduction]\label{lem:ls} Let $(M,g)$ be a closed Riemannian manifold with $g=g_v$, $v\in \mathcal M_k$, and $0<\alpha<1$. There are an open neighborhood $\mathcal U_g\subset K_g$ of $0$ and a real-analytic map $$\Phi_g:\mathcal U_g\to K_g^\perp \cap N_g^{2,\alpha}\qquad \text{with} \quad \Phi_g(0)=0\quad \text{and}\quad D\Phi_g(0)=0$$ such that both the graph and the reduced functional given by $$\alpha_g(\xi)\coloneqq 1+\xi+\Phi_g(\xi) \qquad  \text{and} \qquad \Sigma_g(\xi)\coloneqq F_{k,g}[\alpha_g(\xi)]$$ are real analytic in $\xi\in \mathcal U_g$ and satisfy  
 \begin{equation}
    \label{eq:LS_red_der}D_{\alpha_g(\xi)}F_{k,g}[\zeta]=D_{\xi}\Sigma_g[\pi_{K_g}\zeta]\qquad \text{for all} \ \ \zeta\in N_g^{2,\alpha}\,,
    \end{equation} where $\pi_{K_g}$ is the $L^2(g)$-orthogonal projection onto $K_g$. In particular, the first variation vanishes on $K_g^\perp\cap N_g^{2,\alpha}$. Moreover, $\alpha_g(\xi)$ is positive and $k$-admissible, and every critical point sufficiently close to $1$ (up to multiplication by a constant) can be represented in the form $\alpha_g(\xi)$ for some $\xi\in \mathcal U_g$. In a sufficiently small neighborhood, $\alpha_g(\xi)$ and its derivative are bounded in $C^{m,\alpha}$ for every fixed $m\in \N$.
\end{lemma}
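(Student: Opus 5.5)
Following \cite{Sim83, EnNeSp}, I would apply the implicit function theorem in the analytic category to the Euler--Lagrange operator, restricted to mean-zero perturbations. For $\phi\in N_g^{2,\alpha}$ small in $C^{2,\alpha}$, let $\mathcal N_g(\phi)\in C^{0,\alpha}(M)$ be the $L^2(g)$-gradient of $\phi\mapsto F_{k,g}[1+\phi]$ with its mean removed. By \eqref{eq:mix_exp} this is the function $\sigma_k(g_{1+\phi})(1+\phi)^{q}-\text{const}$, up to normalizing factors. This map is real analytic from a $C^{2,\alpha}$-neighborhood of $0$ into the mean-zero part of $C^{0,\alpha}$. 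It vanishes at $\phi=0$ because $\sigma_k(g)$ is constant for $g=g_v$, $v\in\mathcal M_k$.

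Its linearization $L_g\coloneqq D\mathcal N_g(0)$ is, up to the positive factor fixed by the chain rule and the change from the inverse to the standard parametrization (which at $t=0$ only changes $\phi$ by a constant multiple), the operator associated with $Q_g$ in \eqref{eq:F''0}. That operator is $-\Div_g(T_{k-1}(g^{-1}A_g)\nabla\phi)-2kc_g\phi$ on mean-zero functions. Since $T_{k-1}(g^{-1}A_g)$ is positive definite and smooth, Schauder theory makes $L_g: N_g^{2,\alpha}\to N_g^{0,\alpha}$ Fredholm and self-adjoint with respect to $L^2(g)$. Its kernel is $K_g$, which is finite-dimensional and smooth by elliptic regularity, and its range is $K_g^\perp\cap N_g^{0,\alpha}$.

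I then consider
\[
\mathcal G(\xi,w)\coloneqq (\mathrm{id}-\pi_{K_g})\,\mathcal N_g(\xi+w),\qquad \xi\in K_g,\ w\in K_g^\perp\cap N_g^{2,\alpha}.
\]
Here $\partial_w\mathcal G(0,0)=L_g|_{K_g^\perp}$ is an isomorphism onto $K_g^\perp\cap N_g^{0,\alpha}$. The analytic implicit function theorem gives a real-analytic $\Phi_g$ with $\Phi_g(0)=0$ and $\mathcal G(\xi,\Phi_g(\xi))=0$. Differentiating at $0$ gives $L_g D\Phi_g(0)\xi=-(\mathrm{id}-\pi_{K_g})L_g\xi=0$, so $D\Phi_g(0)=0$. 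Then $\alpha_g$ and $\Sigma_g$ are analytic as compositions of analytic maps.

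For \eqref{eq:LS_red_der}, I split $\zeta=\pi_{K_g}\zeta+(\zeta-\pi_{K_g}\zeta)$. The component of $\nabla F$ along $K_g^\perp$ vanishes at $\alpha_g(\xi)$ by construction, so $D_{\alpha_g(\xi)}F_{k,g}[\zeta]=D_{\alpha_g(\xi)}F_{k,g}[\pi_{K_g}\zeta]$. Differentiating $\Sigma_g$ by the chain rule gives $D_\xi\Sigma_g[\eta]=D_{\alpha_g(\xi)}F_{k,g}[\eta+D\Phi_g(\xi)\eta]$. The second term vanishes because $D\Phi_g(\xi)\eta\in K_g^\perp\cap N_g^{2,\alpha}$.

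The remaining properties follow quickly. Positivity and $k$-admissibility of $\alpha_g(\xi)$ hold for small $\mathcal U_g$ because the conformal Schouten tensor depends continuously on $C^2$ data, $\Gamma_k^+$ is open, and $1$ is admissible. For the critical points, take $u$ close to $1$ and normalize it by a constant so that $u-1$ has mean zero; this is legitimate since $F$ is scale invariant. Decompose $u-1=\xi+w$. Criticality gives $\mathcal G(\xi,w)=0$, so $w=\Phi_g(\xi)$ by uniqueness in the implicit function theorem. For the higher-order bounds, $\alpha_g(\xi)$ satisfies $\mathcal N_g(\alpha_g(\xi)-1)\in K_g$, which is a smooth finite-dimensional space. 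This is a uniformly elliptic fully nonlinear equation near $1$ with concave structure in $\Gamma_k^+$ (or can be treated simply as a perturbation of $L_g$). Bootstrapping Schauder estimates then gives $C^{m,\alpha}$ bounds, and the same applies to $D\alpha_g$ via the differentiated linear equation.

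The main obstacle I expect is the identification of $D\mathcal N_g(0)$ with $Q_g$ on the correct function spaces. The change of parametrization and the volume normalization must be checked to produce exactly the mean-zero self-adjoint operator, with range equal to $K_g^\perp\cap N_g^{0,\alpha}$. The constant mode removed by the normalization also has to be handled consistently. Once that is settled, the rest is the standard analytic implicit function argument.
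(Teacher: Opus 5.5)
Your proposal is correct and follows essentially the same route as the paper: the analytic implicit function theorem for the mean-zero first variation, using that $K_g^\perp\cap N_g^{2,\alpha}\to K_g^\perp\cap N_g^{0,\alpha}$ is invertible; the chain-rule argument for \eqref{eq:LS_red_der}; and local uniqueness for nearby critical points. The only difference is the $C^{m,\alpha}$ bounds, which you obtain by a Schauder bootstrap, whereas the paper reruns the implicit-function argument from $N_g^{m+2,\alpha}$ to $N_g^{m,\alpha}$ and invokes local uniqueness. One harmless slip: the $L^2(g)$-gradient at $1+\phi$ is proportional to $(1+\phi)^{q-1}\bigl(\sigma_k(g_{1+\phi})-\bar\sigma\bigr)$, with $\bar\sigma$ the $g_{1+\phi}$-average of $\sigma_k$, not $\sigma_k(g_{1+\phi})(1+\phi)^{q}$ minus a constant; since you identify $D\mathcal N_g(0)$ with the operator of $Q_g$ directly rather than from that formula, nothing downstream is affected.
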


Since $K_g$ is finite-dimensional, we do not have to specify in which norm we have to be sufficiently close to $1$ as all norms are equivalent. In Lemma \ref{lem:ls} the neighborhoods of critical points can be taken in $C^{2,\alpha}(M)$.

This is the analogue of \cite[Lemma 2.2 (1)]{EnNeSp}. For the sake of completeness, we give the standard reduction argument on the set of mean zero functions $N_g^{2,\alpha}$; cf.~\cite[Section~2.3]{FM20}.

\begin{proof}
    On $1+N_g^{2,\alpha}$, the first variation defines an analytic map from a neighborhood of $0$ in $N_g^{2,\alpha}$ to $N_g^{0,\alpha}$. Evaluated at $1+\xi$, its linearization at $\xi$ is given by the associated self-adjoint operator of the Hessian $Q_g$. Its restriction from $K_g^\perp \cap N_g^{2,\alpha} $ to $K_g^\perp \cap N_g^{0,\alpha}$ is an isomorphism. An analytic version of the implicit function theorem then provides the map $\Phi_g$ with $\Phi_g(0)=0$ and $D_0\Phi_g=0$. 

    By construction, the first variation of $F_{k,g}$ at $\alpha_g(\xi)$ vanishes on $K_g^\perp\cap N_g^{2,\alpha}$. Using that the range of $D_\xi \Phi_g$ lies in $K_g^\perp\cap N_g^{2,\alpha}$, where $D_{\alpha_g(\xi)}F_{k,g}$ vanishes, and the chain rule, we find $$D_{\xi}\Sigma_g[\pi_{K_g}\zeta]=D_{\alpha_g(\xi)} F_{k,g}[\pi_{K_g}\zeta+D_\xi \Phi_g[\pi_{K_g}\zeta]]=D_{\alpha_g(\xi)} F_{k,g}[\zeta]$$ for every $\zeta\in N_g^{2,\alpha}$. This proves \eqref{eq:LS_red_der}. Local uniqueness in the implicit function theorem provides the representation of every critical point sufficiently close to $1$ in $C^{2,\alpha}$ after adjusting a scale, so the functions lie in $1+N_g^{2,\alpha}$. Positivity and $k$-admissibility follow by shrinking $\mathcal U_g$. Applying the same argument from $N^{m+2}_g$ to $N^{m}_g$ and local uniqueness (and shrinking $\mathcal U_g$ further if necessary), we obtain $C^{m,\alpha}$-bounds for every $m>0$ and $0< \alpha<1$.
\end{proof}

    Now we apply the following inequality by {\L}ojasiewicz \cite{Loj65}; see also \cite[Lemma 3.2]{EnNeSp}. 
    If $\Sigma:\R^m\to \R$ is a real-analytic function near $0$ with $\Sigma\geq 0$ and $\Sigma(0)=0$, then there exist $\tilde \delta>0$, $\tilde c>0$, and $\gamma\geq 0$ such that for all $\phi\in B_{\tilde \delta}(0)$ we have 
    \begin{equation}
        \Sigma(\phi)\geq \tilde c\dist(\phi,\mathcal Z)^{2+\gamma}\,,\label{eq:low}
    \end{equation}  where $\mathcal Z\coloneqq \{\bar \phi\in B_{\tilde \delta}(0): \Sigma(\bar \phi)=0\}$, $\dist(\phi,\mathcal Z)\coloneqq \inf_{\bar\phi\in \mathcal Z} |\phi-\bar\phi|$ denotes the Euclidean distance, and the closed ball  lies in the domain of analyticity. 
    Note that $D_0\Sigma=0$ as used in \cite[Lemma 3.2]{EnNeSp} follows from $\Sigma\geq 0$ and $\Sigma(0)=0$. Setting $\Sigma=\Sigma_g-Y_k$ satisfies the required conditions. The constants and exponents depend on the fixed chart.

    Note that a non-degenerate $\mathcal M_k$ gives $\Sigma_g\equiv Y_k$ close to $0$, and thus $\gamma=0$. Indeed, in the non-degenerate case, the minimizers with mean one form a smooth manifold with tangent space $K_g$. As projection onto $K_g$ is a local diffeomorphism, Lemma \ref{lem:ls} implies that points on the graph near $0$ (which have mean one) are minimizers. On the orthogonal directions, perturbations of $0$ are controlled by the quadratic Hessian spectral gap, so $\gamma=0$.

\subsection{Compactness of almost-minimizers}

In this subsection, we gather some compactness results that we need later.

\begin{proposition} \label{prop:cpct}Let $2\leq k< n/2$ and $(M^n,[g_0])$ be a smooth, closed, connected, locally conformally flat Riemannian manifold with $k$-admissible metric $g_0$. The following two facts hold.
\begin{enumerate}
    \item[(i)] $Y_k(M,[g_0])>0$ and $\mathcal M_k$ is nonempty.
    \item[(ii)] If $(h_j)\subset \mathcal C_k([g_0])$ is a sequence of normalized functions with
    $$\sigma_k(g_{h_j})\equiv c_j\qquad \text{and}\qquad F_{k,g_0}[h_j]\to Y_k(M,[g_0])\,,$$ then after possibly passing to a subsequence there are normalized $v\in \mathcal M_k$ and $(\Psi_j)\subset G$ such that $(h_j)_{\Psi_j}\to v$ in any $C^m$-norm, $m\in \N$.
\end{enumerate}
    
\end{proposition}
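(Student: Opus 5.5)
The plan is to obtain both parts from the existence and compactness theory for the $\sigma_k$-Yamabe problem on locally conformally flat manifolds due to Guan--Wang \cite{GW04} and Li--Li \cite{Li2005}, which we combine with the Ferrand--Obata dichotomy to handle the conformal group.

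\textbf{Part (i).}
\begin{itemize}
\item[1.] \emph{Positivity of $Y_k$.} For $u\in\mathcal C_k([g_0])$ the metric $g_u$ is $k$-admissible, so $\sigma_k(g_u)>0$ and $F_{k,g_0}[u]>0$. Hence $Y_k\ge 0$, and we must rule out $Y_k=0$.
\item[2.] We use the monotonicity formula \eqref{eq:monot} with $l=1$: $F_{k}$ controls a positive power of the normalized total scalar curvature.
\item[3.] Since $g_0$ is $k$-admissible, $R_{g_0}>0$, so the Yamabe constant $Y_1(M,[g_0])$ is positive. Along $\Gamma_k^+$-admissible metrics we have $F_1\ge Y_1>0$, and step 2 then gives $Y_k>0$. (Alternatively, $Y_k>0$ can be read off directly from \cite{GW04}.)
\item[4.] \emph{Existence of a minimizer.} Guan--Wang prove that the Guan--Wang flow converges to a metric of constant $\sigma_k$-curvature realizing $Y_k$. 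They also show that $Y_k$ equals the infimum over critical points, by monotonicity of $\mathcal F_k$ along the flow starting from any admissible metric together with convergence in the locally conformally flat case.
\item[5.] If $(M,[g_0])$ is conformal to the round sphere, constants are minimizers by the Obata-type classification. Otherwise $G$ is compact, and the flow limit is an admissible smooth function which, after normalizing its $L^q(g_0)$-norm, lies in $\mathcal M_k$.
\end{itemize}

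\textbf{Part (ii).}
\begin{itemize}
\item[1.] The $h_j$ are normalized solutions of the $\sigma_k$-Yamabe equation $\sigma_k(g_{h_j})\equiv c_j$. Since $\vol_{g_{h_j}}(M)=1$, we have $c_j=F_{k,g_0}[h_j]\to Y_k>0$, so the constants $c_j$ are bounded above and below.
\item[2.] \emph{Non-spherical case.} If $(M,[g_0])$ is not conformal to the round sphere, Li--Li's compactness theorem \cite{Li2005} for solutions of the $\sigma_k$-Yamabe equation on locally conformally flat manifolds gives uniform bounds $C^{-1}\le h_j\le C$ and uniform $C^m$ bounds for all $m$. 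The $C^0$ bound comes from Liouville-type classification and blow-up analysis; the higher bounds follow from Guan--Wang/Li--Li local estimates and Evans--Krylov plus Schauder theory, which apply because the equation is uniformly elliptic and concave in $\Gamma_k^+$.
\item[3.] By Arzel\`a--Ascoli a subsequence converges in every $C^m$ to a smooth positive $v$. Here we take $\Psi_j=\mathrm{id}$.
\item[4.] In the limit, $\sigma_k(g_v)\equiv Y_k>0$, $\|v\|_{L^q(g_0)}=1$, and $F_{k,g_0}[v]=Y_k$.
\item[5.] For admissibility, $\lambda(g_v^{-1}A_{g_v})$ lies in the closure of $\Gamma_k^+$ with $\sigma_k>0$. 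The connected component of $\{\sigma_k>0\}$ containing $\Gamma_k^+$ is $\Gamma_k^+$ itself, so $v\in\mathcal M_k$.
\item[6.] \emph{Spherical case.} If $M$ is conformal to $\mathbb S^n$, we pass to the round metric. By the classification of solutions on $\mathbb S^n$ (Viaclovsky, Li--Li), every $h_j$ equals $c\,(1)_{\Psi}$ up to the fixed conformal map. Choosing $\Psi_j$ to undo the M\"obius factor makes $(h_j)_{\Psi_j}$ the normalized constant, so convergence is trivial.
\end{itemize}

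\textbf{Main obstacle.} The expected difficulty is the uniform $C^0$ bound in part (ii), step 2, i.e.\ ruling out blow-up in the non-spherical case. We plan to cite Li--Li directly rather than reprove it. Their argument rescales at a blow-up point to obtain an entire solution on $\R^n$, classifies it as a standard bubble, and derives a contradiction with $(M,[g_0])$ not being conformal to the sphere, via a positive-mass-type or Liouville argument. We will only need to check that its hypotheses match ours: $2\le k<n/2$, local conformal flatness, and admissibility of the background metric $g_0$.
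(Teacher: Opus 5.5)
Your overall route is the paper's. You get positivity of $Y_k$ from the Guan--Wang inequality \eqref{eq:monot}, existence via the Guan--Wang flow, and part (ii) via the Li--Li a priori estimates (after rescaling to $\sigma_k\equiv 1$, which your two-sided bounds on $c_j$ allow) or the classification on the sphere.

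There is, however, a gap in part (i), steps 4--5, in the non-spherical case. The flow started at a given admissible metric converges only to \emph{some} constant $\sigma_k$-curvature metric of no larger energy. It need not realize $Y_k$: every critical point is a stationary solution of the flow. Your correct observation that $Y_k$ equals the infimum of $F_{k,g_0}$ over normalized critical points does not by itself give attainment. Compactness of $G$ is irrelevant here; what is needed is compactness of almost-minimizing critical points.

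The paper does the following. It takes a normalized minimizing sequence $(f_j)$ and flows each $g_{f_j}$ to a normalized critical point $g_{h_j}$ with $Y_k\le F_{k,g_0}[h_j]\le F_{k,g_0}[f_j]\to Y_k$. It then applies the Li--Li estimates to the rescalings of $h_j$ with $\sigma_k\equiv 1$ and extracts a smooth limit, which is the minimizer. Your proof of (ii) uses only $Y_k>0$ and itself shows that the limit $v$ lies in $\mathcal M_k$. So the repair is to prove (ii) before the existence statement and deduce $\mathcal M_k\neq\emptyset$ by applying (ii) to these $h_j$.

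A smaller point concerns orientability. The results of \cite{GW04} you invoke (the inequality \eqref{eq:monot}, hence $Y_k>0$, and convergence of the flow) are stated for oriented manifolds, but $M$ is not assumed orientable. The paper passes to the orientable double cover $\pi:\tilde M\to M$. There $\mathcal F_k[\pi^*h]=2^{2k/n}\mathcal F_k[h]$, which yields $0<Y_k(\tilde M,[\pi^*g_0])\le 2^{2k/n}Y_k(M,[g_0])$. For the flow, the paper observes that for $l=0$ the Guan--Wang convergence proof does not use orientability.

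Your admissibility argument for the limit in (ii), step 5, via $\overline{\Gamma_k^+}\cap\{\sigma_k>0\}=\Gamma_k^+$, is correct and slightly more explicit than the paper's.
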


\begin{proof}
   We start with a proof of positivity of $Y_k$. If $M$ is orientable, this follows from \cite[Corollary 1]{GW04}.  Otherwise, we consider the orientable double cover $\pi:\tilde M\to M$ of $M$. Then $A_{\pi^*g}=\pi^*A_g$ and the integration over $\tilde M$ doubles the integrals over $M$. Hence, we have $$\mathcal F_k[\pi^*h]=2^{\frac{2k}n} \mathcal F_k[h]$$ for any conformal metric $h$ on $M$, so the orientable case gives $$0<Y_k( \tilde M,[\pi^*g_0])\leq2^{\frac{2k}n}Y_k( M, [g_0])\,.$$

   To prove that $\mathcal M_k \neq \emptyset$, we consider a flow introduced by Guan and Wang \cite{GuanWangFlow2003}; for more details, we refer to Subsection \ref{subsec:GW}. Choose an $L^q(g_0)$-normalized minimizing sequence $(f_j)\subset \mathcal C_k([g_0])$, so $F_{k,g_0}[f_j]\to Y_k$, and start the Guan--Wang flow at each $g_{f_j}$. By \cite[Theorem 1]{GuanWangFlow2003} and \cite[Theorem 2]{GW04}, the flow $g_{f_j(t)}$ preserves volume and $k$-admissibility, decreases $\mathcal F_{k}$, and converges to a smooth, constant $\sigma_k$-curvature metric $g_{h_j}$ with $\|h_j\|_{L^q(g_0)}=1$. Moreover, the time-dependent metrics $g_{f_j(t)}$ converge in $C^{4,\alpha}(M)$, $0<\alpha<1$, and their $\sigma_k$-curvatures in $L^2(g_0)$. The flow also applies to non-orientable $M$; cf.~Subsection \ref{subsec:GW}.  Hence, in the limit $j\to\infty$, we obtain $$Y_k\leq F_{k,g_0}[h_j]\leq F_{k,g_0}[f_j]\to Y_k\,.$$ 
   
   If $M$ is not conformally equivalent to the sphere, we set $$\beta_j\coloneqq F_{k,g_0}[h_j]^{\frac{n-2k}{4k^2}}\,,$$ where $F_{k,g_0}[h_j]= \sigma_k(g_{h_j})$ by volume normalization and constant $\sigma_k$-curvature equation for $h_j$. Note that $F_{k,g_0}[h_j]\to Y_k>0$ as $j\to \infty$, so $\beta_j$ stays positive in the limit. Then $\sigma_k(g_{\beta_j h_j})=1$ and  the uniform upper and lower $C^{m,\alpha}$-bounds by Li--Li \cite[Theorem 1.1', Remark 1.4]{Li2005} are applicable for every $m\in \N$, $0<\alpha<1$. After passing to a subsequence and scaling back, the sequence $(h_j)$ converges smoothly to a normalized, $k$-admissible minimizer, which concludes (i) in case of manifolds not conformally equivalent to the sphere.

If $M$ is conformally equivalent to the sphere, we pass to the round metric. Then by Viaclovsky's classification of critical points for space forms \cite[Theorem 3]{Via00} every normalized solution $h_j$ of the constant $\sigma_k$-curvature equation is the same constant up to a M\"obius transformation $\Phi_j$. Since all these solutions have the same energy and $F_{k,g_0}$ is decreasing, this proves that all $h_j$ are minimizers, which concludes (i).

   Next, we prove (ii). If $M$ is not conformally equivalent to the sphere, we can apply the previous argument using the uniform Li--Li estimates to the given sequence $(h_j)$ and conclude by taking $\Psi_j=\mathbbm 1$. If $M$ is conformally equivalent to the sphere, then we work in the round background metric and the same classification argument for constant $\sigma_k$-curvature solutions provides $\Psi_j$ for which $(h_j)_{\Psi_j}$ is the constant for all $j$ by normalization. This proves the claim. 
\end{proof}

\subsection{A unified notion of distance} We consider the distance
\begin{equation}
    \label{eq:dist_def}\dist_{k,\gamma}(u)\coloneqq \inf_{\lambda>0,\Psi\in G\,,  v\in \mathcal M_k} (\|\lambda v^{-1}(u)_\Psi-1\|^{2+\gamma}_{W^{1,2}(g_v)}+\|\lambda v^{-1}(u)_\Psi-1\|^{\max\{2k,2+\gamma\}}_{W^{1,2k}(g_v)})\,.
\end{equation} It is bounded since taking $\lambda\to 0$ provides the upper bound \begin{equation}
    \dist_{k,\gamma}(u)\leq \|1\|_{W^{1,2}(g_v)}^{2+\gamma}+\|1\|_{W^{1,2k}(g_v)}^{\max\{2k,2+\gamma\}}=2\label{eq:dist_bound} 
\end{equation}as the volume of $v\in \mathcal M_k$ is normalized. Moreover, $\dist_{k,\gamma}(u)$ comprises both the case of compact and non-compact groups of conformal diffeomorphisms. Recall that in the latter case $M$ is conformally equivalent to the sphere by the Ferrand--Obata theorem \cite{Fer96,Oba71}. The next lemma tells us how $\dist_{k,\gamma}$ simplifies in each case.

\begin{lemma} \label{lem:dist_notions}
 (i) If $G$ is compact, then $\dist_{k,\gamma}(u)$ is equivalent to $$d_{k,1}(u)\coloneqq \inf_{\lambda>0, v\in \mathcal M_k} (\|\lambda u-v\|^{2+\gamma}_{W^{1,2}(g)}+\|\lambda u-v\|^{\max\{2k,2+\gamma\}}_{W^{1,2k}(g)})\,.$$\\
(ii)  If $M$ is the round unit sphere, then  $\dist_{k,\gamma}(u)$ is equivalent to $$d_{k,2}(u)\coloneqq\inf_{\lambda>0, \Psi\in G} (\|\lambda (u)_\Psi-v\|^{2+\gamma}_{W^{1,2}(g_*)}+\|\lambda  (u)_\Psi-v\|^{\max\{2k,2+\gamma\}}_{W^{1,2k}(g_*)})$$ with $v\equiv |\mathbb S^n|^{-1/q}$ and $G$ the set of M\"obius transformations on $\mathbb S^n$.
\end{lemma}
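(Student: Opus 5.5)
The plan is to compare the integrands pointwise. In both parts the comparison rests on uniform two-sided bounds for minimizers and their derivatives, so the first step is to establish those bounds.

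\emph{Step 1: uniform bounds on minimizers.} First take $G$ compact. Then $M$ is not conformally equivalent to the sphere, by Ferrand--Obata. Proposition~\ref{prop:cpct}(ii) applied to sequences in $\mathcal M_k$ (each has constant $\sigma_k$-curvature and energy $Y_k$) shows that $\mathcal M_k$ is compact in every $C^m$, with $\Psi_j=\mathbbm 1$. Hence there is a constant $C\geq1$ with
\[
C^{-1}\leq v\leq C\qquad\text{and}\qquad \|v\|_{C^1(g)}\leq C
\]
for all $v\in\mathcal M_k$. Consequently $g_v=v^{2q/n}g$ is uniformly equivalent to $g$, and $\mathrm d\vol_{g_v}$ is uniformly comparable to $\mathrm d\vol_g$.

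\emph{Step 2: comparing the integrands.} For $w=\lambda(u)_\Psi$ we have
\[
v^{-1}w-1=v^{-1}(w-v)\qquad\text{and}\qquad \nabla\bigl(v^{-1}(w-v)\bigr)=v^{-1}\nabla(w-v)-v^{-2}(w-v)\nabla v .
\]
By Step 1 this gives, for $p\in\{2,2k\}$,
\[
\|v^{-1}w-1\|_{W^{1,p}(g_v)}\leq C\,\|w-v\|_{W^{1,p}(g)} .
\]
Conversely, writing $w-v=v\,(v^{-1}w-1)$ and using $|\nabla_{g_v}f|_{g_v}=v^{-2q/n\cdot\frac12}|\nabla f|_g$ yields the reverse bound with the same type of constant. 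Raising to the fixed exponents only changes the constants, so the two infima are comparable termwise.

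\emph{Step 3: removing the group action in part (i).} It remains to handle $\Psi\in G$ in $\dist_{k,\gamma}$, which is absent in $d_{k,1}$. Since $F_{k,g}$ and the $L^q$-normalization are invariant under \eqref{eq:conf}, $(v)_{\Psi^{-1}}\in\mathcal M_k$ whenever $v\in\mathcal M_k$. Moreover, $G$ compact means it is a compact Lie group acting smoothly, so $J_\Psi$ and $D\Psi$ are bounded above and below uniformly in $\Psi$. Therefore
\[
\|(f)_{\Psi}\|_{W^{1,p}(g)}\simeq\|f\|_{W^{1,p}(g)}
\]
uniformly in $\Psi$. Applying this to $f=\lambda(u)_\Psi-v$ and using $((u)_\Psi)_{\Psi^{-1}}=u$ turns $\|\lambda(u)_\Psi-v\|$ into $\|\lambda u-(v)_{\Psi^{-1}}\|$. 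The infimum over $\Psi$ and $v$ is then just an infimum over $v'\in\mathcal M_k$, which proves (i).

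\emph{Part (ii).} On the round sphere, Viaclovsky's classification used in the proof of Proposition~\ref{prop:cpct} gives $\mathcal M_k=\{(v_0)_\Phi:\Phi\in G\}$, where $v_0\equiv|\mathbb S^n|^{-1/q}$. Writing $v=(v_0)_\Phi$, we have $g_v=\Phi^*(c\,g_*)$, a pullback by an isometry onto a constant multiple of $g_*$. Hence
\[
v^{-1}\lambda(u)_\Psi-1=\bigl(v_0^{-1}\lambda(u)_{\Psi\circ\Phi^{-1}}-1\bigr)\circ\Phi ,
\]
using $((u)_\Psi)_{\Phi^{-1}}$ and the identity $(f)_\Phi/(v_0)_\Phi=(f\circ\Phi)/v_0$. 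The $W^{1,p}(g_v)$ norm equals the $W^{1,p}(c\,g_*)$ norm of the unpulled function, and since $v_0$ is a constant this equals a constant multiple of $\|\lambda'(u)_{\Psi'}-v_0\|_{W^{1,p}(g_*)}$. Reparametrizing $\Psi'=\Psi\circ\Phi^{-1}$ reduces $\dist_{k,\gamma}$ to $d_{k,2}$.

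\emph{Main obstacle.} The only real obstacle is justifying the uniform bounds. In (i) these are the $C^1$ bounds on $\mathcal M_k$ and the bounds on $\Psi$; I would obtain the latter from compactness of $G$ in the $C^1$ topology, since it is a closed subgroup of the compact isometry group of a Yamabe-type metric in $[g]$. In (ii) no uniform bounds are needed, because the identity above is exact up to constant factors.
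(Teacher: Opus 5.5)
Your proof is correct and follows essentially the same route as the paper: uniform positivity and $C^1$ bounds on $\mathcal M_k$ from Proposition~\ref{prop:cpct}(ii). In (i) you take $\Psi=\mathbbm 1$ for one inequality and use the uniform equivalence $\|(\cdot)_\Psi\|_{W^{1,p}(g)}\simeq\|\cdot\|_{W^{1,p}(g)}$ together with $(v)_{\Psi^{-1}}\in\mathcal M_k$ for the other; in (ii) you use Viaclovsky's classification and pull back by $\Phi$. Two cosmetic points: $G$ compact excluding the sphere is the trivial direction (the M\"obius group is non-compact), not Ferrand--Obata; and in (ii) the $W^{1,p}(c\,g_*)$-norm is only two-sidedly comparable to the $W^{1,p}(g_*)$-norm, not a constant multiple of it, since the gradient and zeroth-order terms scale differently with $c$ — comparability is all the equivalence needs.
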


The equivalence constants depend on $M,[g],k,\gamma$ in (i) and only on $n,k,\gamma$ in (ii).  These distances should be compared with the ones in \eqref{eq:sigma_2} and \eqref{eq:Yamabe_1}.

\begin{proof} (i) Let $G$ be compact. Together with compactness of $\mathcal M_k$ up to conformal diffeomorphisms, $\mathcal M_k$ is compact in every $C^m$, $m\in \N$. Therefore, we have $$\inf_{v\in\mathcal M_k}\inf_M v>0 \qquad \text{and} \qquad \sup_{v\in \mathcal M_k}\|v\|_{C^m}<\infty\,.$$ As a consequence, $\|\cdot\|_{W^{1,p}(g)}$ and $\|\cdot/v\|_{W^{1,p}(g_v)}$ are equivalent, $p\in \{2,2k\}$. Thus, inserting $\Psi=\mathbbm 1$ in $\dist_{k,\gamma}(u)$, there is some $C>0$ such that $$\dist_{k,\gamma}(u)\leq C d_{k,1}(u)\,.$$ By compactness of $G$, also $\|(\,\cdot\,)_\Psi\|_{W^{1,p}(g)}$ and $\|\cdot\|_{W^{1,p}(g)}$ are equivalent, $p\in \{2,2k\}$. Hence, there is some $C>0$ such that $$\dist_{k,\gamma}(u)\geq C d_{k,1}(u)\,.$$

(ii) Now let $M$ be a round unit sphere. By Viaclovsky's classification \cite{Via00}, every normalized minimizer is of the form $v=c(1)_\Phi$, $\Phi\in G$, $c=|\mathbb S^n|^{-1/q}$. Thus, $g_v=|\mathbb S^n|^{-2/n}\Phi^*(g_*)$ and 
$$\|\lambda (u)_\Psi v^{-1}-1\|^p_{W^{1,p}(g_v)}= c^{q-p}\|\lambda (u)_{\Psi\circ \Phi^{-1}}-c\|^p_{L^p(g_*)}+c^{q-p-\frac{pq}n}\lambda\|\nabla (u)_{\Psi\circ \Phi^{-1}}\|^p_{L^p(g_*)}$$ for $p\in\{2,2k\}$. Taking infima on both sides proves the claim. 
\end{proof}

Next, we describe properties of $\dist_{k,\gamma}$.

\begin{lemma}\label{lem:att}
    There is a $\delta^*>0$ such that for every $L^q(g_0)$-normalized $u \in W^{1,2k}(g_0)$ with $\dist_{k,\gamma}(u)<\delta^*$ the infimum in $\dist_{k,\gamma}(u)$ is attained. The map $\min\{\dist_{k,\gamma}(u),\delta^*\}$ is continuous on the set of $L^q(g_0)$-normalized functions with respect to $u\in W^{1,2k}(g_0)$.
\end{lemma}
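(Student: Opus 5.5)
The proof reduces attainment to compactness of minimizing triples $(\lambda,\Psi,v)$ for the infimum defining $\dist_{k,\gamma}(u)$, and derives continuity from a uniform-in-$u$ Lipschitz-type bound of the inner functional. We treat separately the case of compact $G$ and the case where $M$ is conformally the sphere, via Ferrand--Obata.

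\textbf{Step 1: compactness of minimizers.} By Proposition \ref{prop:cpct} and its proof, in the compact-$G$ case $\mathcal M_k$ is compact in every $C^m$, and $G$ is a compact Lie group acting continuously on $W^{1,2k}(g_0)$. In the spherical case we use Lemma \ref{lem:dist_notions}(ii) to reduce to the parameters $(\lambda,\Psi)$ with $v$ the normalized constant; modulo rotations, $\Psi$ is parametrized by $\xi\in B^{n+1}$ via \eqref{eq:Mob_param}.

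\textbf{Step 2: a priori bounds on a minimizing sequence.} Take a minimizing sequence $(\lambda_j,\Psi_j,v_j)$ with value below $\delta^*$. Small $\dist$ forces $\|\lambda_j v_j^{-1}(u)_{\Psi_j}-1\|_{L^{q}(g_{v_j})}$ to be small, using the Sobolev embedding $W^{1,2k}\hookrightarrow L^q$ with constants uniform in $v\in\mathcal M_k$ (by $C^m$-compactness, or by conformal invariance on the sphere). Since $\|(u)_\Psi\|_{L^q(g_0)}=1$ and $\vol_{g_v}=1$, the identity $\|v^{-1}w\|_{L^q(g_v)}=\|w\|_{L^q(g_0)}$ gives $|\lambda_j-1|\lesssim(\delta^*)^{1/\max\{2k,2+\gamma\}}$. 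Hence $\lambda_j$ stays in a compact subset of $(0,\infty)$.

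In the spherical case we must also rule out $|\xi_j|\to1$; this is the main obstacle. If $|\xi_j|\to 1$, then $(u)_{\Psi_j}$ concentrates: for fixed $u\in L^q$, the functions $J_{\Psi_j}^{1/q}(u\circ\Psi_j)$ converge weakly to $0$ in $L^q$, while their norm stays $1$. This contradicts $L^q$-closeness of $\lambda_j(u)_{\Psi_j}$ to the nonzero constant $v$ once $\delta^*$ is small, say $\|\lambda_j(u)_{\Psi_j}-v\|_{L^q}<\tfrac12\|v\|_{L^q}$. Indeed, testing against $v$ gives $\int \lambda_j(u)_{\Psi_j}v\to0$, whereas $L^q$-closeness forces this integral to be at least $\tfrac12\|v\|_{L^q}^2$ by Hölder.

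\textbf{Step 3: attainment.} Extract convergent subsequences $\lambda_j\to\lambda$, $\Psi_j\to\Psi$ (in $C^m$), and $v_j\to v\in\mathcal M_k$. The map $(\lambda,\Psi,v)\mapsto\lambda v^{-1}(u)_\Psi-1$ is continuous into $W^{1,p}$, including continuity of $g_v$-norms in $v$ and strong continuity of the $G$-action on $W^{1,2k}$ by density of smooth functions. Therefore the limit triple attains the infimum.

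\textbf{Step 4: continuity.} For $u,w$ normalized, fixing any admissible triple with parameters in the compact set from Step 2 gives
\[
\bigl|\|\lambda v^{-1}(u)_\Psi-1\|_{W^{1,p}(g_v)}-\|\lambda v^{-1}(w)_\Psi-1\|_{W^{1,p}(g_v)}\bigr|\le C\|u-w\|_{W^{1,2k}(g_0)},
\]
with $C$ uniform. Uniformity holds because $(\cdot)_\Psi$ is an isometry-up-to-constants on $W^{1,2k}$ for compact $G$, and $W^{1,2}$ is controlled by $W^{1,2k}$. The power maps are locally uniformly continuous on bounded sets. Hence the infimum over the restricted parameter set is continuous where $\dist<\delta^*$, and truncation with $\delta^*$ handles the rest. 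The spherical case requires one care point: the Möbius action is not uniformly bounded on $W^{1,2k}$, so the Lipschitz bound must be run in the $g_v$-normalized form of Lemma \ref{lem:dist_notions}(ii). There, by conformal invariance of the homogeneous parts, $\|(u)_\Psi-(w)_\Psi\|$ is comparable to $\|u-w\|$ up to the fixed lower-order $L^p$ term, which Step 2 controls.
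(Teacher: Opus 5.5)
Your compact-$G$ case and your attainment argument on the sphere are fine. Testing $(u)_{\Psi_j}\rightharpoonup 0$ against the constant is a clean substitute for the paper's Brezis--Lieb computation; only the constant is off, since what you get is $\int\lambda_j(u)_{\Psi_j}v\ge\tfrac12\int v^2$.

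The gap is the continuity statement when $G$ is noncompact. The compact set of M\"obius parameters in Step 2 is obtained by contradiction for a \emph{fixed} $u$, and it genuinely depends on $u$. For example, for $u=c(1)_\Phi$ the optimal parameter is $\Phi^{-1}$ up to a rotation, and it escapes as $\Phi$ does. A Lipschitz comparison over a restricted parameter set only gives continuity at $u$ if that set contains near-optimal parameters for \emph{every} $w$ close to $u$. You never show this. Your fallback paragraph does not supply it either. The non-invariant part of $\nabla(u-w)_\Psi$ is $(u-w)_\Psi\nabla\log J_\Psi^{1/q}$, whose coefficient reaches size $(1-|\xi|)^{-1}$ near the concentration point. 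So it is not a fixed lower-order $L^p$ term of $u-w$, and Step 2 controls it only on a compact parameter set, which is circular.

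The repair, which is what the paper does, is to run your concentration argument along approximating sequences. Suppose $w_j\to u$ in $L^q(g_0)$ and $|\xi_j|\to 1$. Then $(w_j)_{\Psi_j}=(u)_{\Psi_j}+(w_j-u)_{\Psi_j}\rightharpoonup 0$, because the action is an $L^q$-isometry. The same contradiction therefore gives an $L^q$-neighbourhood $N$ of $u$ and a compact $K'\subset B^{n+1}$ such that every triple with value $<\delta^*$, for any $w\in N$, has $\xi\in K'$. On $[\tfrac12,\tfrac32]\times K'$ (times rotations) the action is uniformly bounded on $W^{1,2k}$, and your Lipschitz argument then proves continuity of $\min\{\dist_{k,\gamma},\delta^*\}$ on $N$. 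The paper phrases this as upper semicontinuity of an infimum of continuous functions, plus lower semicontinuity from compactness of near-optimal parameters along $u_j\to u$.

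Incidentally, your premise that the M\"obius action is not uniformly bounded on $W^{1,2k}(\mathbb S^n)$ seems wrong for the weight $J_\Psi^{1/q}$. After stereographic projection, $\|f\|_{W^{1,2k}(\mathbb S^n)}$ is equivalent to $\|\nabla F\|_{L^{2k}(\R^n)}$ by Hardy's inequality ($2k<n$). Similarities act isometrically on that space, and the inversion is bounded, again by Hardy. This would make your Lipschitz bound global, but it would have to be proved, and as written neither route is carried out.
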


\begin{proof}
   Fix $\gamma\geq 0$. All constants in this lemma may depend on $M,[g_0],k,\gamma$. The parameters in $\dist_{k,\gamma}(u)$ are $(\lambda,\Psi,v)$. First, we explain that the scale parameter $\lambda$ stays in a compact set if $\dist_{k,\gamma}(u)$ is sufficiently small. For $L^{q}$-normalized $u$ and $v$, the 
    triangle inequality gives $$|\lambda-1|\leq \|\lambda (u)_\Psi v^{-1}-1\|_{L^{q}(g_v)}\,.$$ 
    If $\dist_{k,\gamma}(u)<\delta^*$, then we can choose $(\lambda,\Psi, v)$ such that $$\|\lambda v^{-1}(u)_\Psi-1\|^{2+\gamma}_{W^{1,2}(g_v)}+\|\lambda v^{-1}(u)_\Psi-1\|^{\max\{2k,2+\gamma\}}_{W^{1,2k}(g_v)}< \delta^*\,.$$ We deduce that $$|\lambda-1|^{\max\{2k,2+\gamma\}}\leq C\delta^*$$ with a $C>0$ from Sobolev embedding, so choosing $\delta^*$ small enough, we can assume $\lambda\in [1/2,3/2]$. Note that $C>0$ can be chosen uniformly in $v\in\mathcal M_k$ by Proposition \ref{prop:cpct}, (ii).

    If $G$ is compact, then Proposition \ref{prop:cpct}, (ii), gives compactness of $\mathcal M_k$ up to symmetries. Since the infimum is taken over the compact sets $G$ and $\mathcal M_k$ and the expression minimized in $\dist_{k,\gamma}$ is jointly continuous in $(u,\lambda,\Psi,v)$, the infimum in $\dist_{k,\gamma}(u)$ is attained for some parameters.

    If $G$ is not compact, then we pass to the round background metric by Ferrand--Obata theorem \cite{Fer96,Oba71},  and the minimizers $\mathcal M_k$ are given by $c(1)_{\Phi}$, $\Phi\in G$, with $c>0$ determined by volume normalization. Hence, the distance can equivalently be written as the infimum over $(\lambda, \Theta)$ with $\Theta\coloneqq \Psi\circ\Phi^{-1}$; cf.~Lemma \ref{lem:dist_notions}, (ii). Recall that a M\"obius transformation $\Theta$ can be written as a rotation followed by a $\Theta_\xi$, $\xi\in B^{n+1}$, as defined in \eqref{eq:Mob_param}. 

    The only noncompact scenario is $|\xi|\to 1$, and in this case the Jacobian $J_{\Theta_\xi}=(1)_{\Psi_\xi}^q$ given in  \eqref{eq:conf_Jac} satisfies $J_{\Theta_\xi}\to 0$ pointwise almost everywhere. Take sequences $|\xi_j|\to 1$ and $\lambda_j\to \lambda\in [1/2,3/2]$ and set $\Theta_j=\Theta_{\xi_j}$. Set $c=\vol_{g_0}(M)^{-1/q}$. By conformal invariance of the $L^q$-norm and the Brezis--Lieb lemma, we find 
    \begin{equation}\label{eq:u_j}
        \|\lambda_j(u_j)_{\Theta_j}-c\|_{L^q(g_0)}^q=\|\lambda_ju_j-c(1)_{\Theta_j^{-1}}\|_{L^q(g_0)}^q\to \lambda^q +1\geq 2^{-q}+1
    \end{equation} as $j\to\infty$ for every sequence $(u_j)$ of smooth, volume-normalized functions approximating $u$ in $L^q(g_0)$; cf.~\cite[Section 3]{FrankPeteranderl2024_sigma2}. Since the right side of \eqref{eq:u_j} is bounded uniformly from below, the parameter $\xi$ has to stay in a compact set $K'\subset B^{n+1}$ if $\delta^*$ is chosen sufficiently small. Hence, together with $\lambda$ staying in a compact set, we know that the infimum can be attained.

    The infimum of continuous functions is upper semicontinuous. On the other hand, if $u_j\to u$ with $\liminf_{j\to\infty} \dist_{k,\gamma}(u_j)<\delta^*$, choose a subsequence realizing this $\liminf$ and parameters in $K'$ for which the expression that is minimized in $\dist_{k,\gamma}$ differs from $\dist_{k,\gamma}(u_j)$ by $1/j$. By compactness of $K'$, we find a convergent subsequence. Since the expression that is minimized in $\dist_{k,\gamma}$ is jointly continuous in the parameters, the limit of the parameters is admissible for the infimum $\dist_{k,\gamma}(u)$. Therefore, we have $$\dist_{k,\gamma}(u)\leq \liminf_{j\to\infty}\dist_{k,\gamma}(u_j)\,.$$ This proves continuity of $\dist_{k,\gamma}$ when restricting to $u$ that satisfy $\dist_{k,\gamma}(u)<\delta^*$.
\end{proof}

\subsection{A global two-term bound} The following proposition is the main coercivity bound used for the high frequency estimate in Section \ref{sec:3}. Its proof does not depend on the local stability argument, does not require connectedness, and allows the top-gradient constant to be independent of the chosen background metric. Instead it uses a property of the superadditivity of the Newton tensor. Since we did not find a reference for this statement, we give a short proof here. It is a consequence of polarizing the $k$-th elementary symmetric polynomials.
\begin{lemma}[Superadditivity of the Newton tensor]\label{lem:superadd} If $A,B\in \Gamma_k^+$, $k\geq 2$, are symmetric matrices, then 
$$T_{k-1}(A+B)\geq T_{k-1}(A)+T_{k-1}(B)\,.$$
\end{lemma}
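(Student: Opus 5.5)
The plan is to reduce the matrix inequality to a scalar superadditivity statement for $\sigma_{k-1}$ on compressed matrices, and then to prove that statement by polarization and G\aa rding's inequality for mixed forms. Throughout, $A\in\Gamma_k^+$ means that $A$ is symmetric with eigenvalues in $\Gamma_k^+$.

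\emph{Step 1: reduction to compressions.} It suffices to show $T_{k-1}(A+B)(\xi,\xi)\geq T_{k-1}(A)(\xi,\xi)+T_{k-1}(B)(\xi,\xi)$ for every unit vector $\xi$. The expressions are invariant under orthogonal conjugation, so after rotating we may take $\xi=e_1$. By \eqref{eq:Newton}, $T_{k-1}(A)=\partial\sigma_k/\partial A$. Expanding $\det(\mathbbm 1+tA)$ along the first row and column gives
\begin{equation*}
T_{k-1}(A)_{11}=\frac{\partial \sigma_k}{\partial a_{11}}(A)=\sigma_{k-1}(A')\,,
\end{equation*}
where $A'$ is the $(n-1)\times(n-1)$ matrix obtained by deleting the first row and column. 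The same identity holds for every $j$: $T_{j-1}(A)_{11}=\sigma_{j-1}(A')$. Since $(A+B)'=A'+B'$, it suffices to prove
\begin{equation*}
\sigma_{k-1}(A'+B')\geq \sigma_{k-1}(A')+\sigma_{k-1}(B')\,.
\end{equation*}

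\emph{Step 2: the compressions lie in $\Gamma_{k-1}^+$.} For $1\leq j\leq k$, the fact recalled before \eqref{eq:Newton} gives that $T_{j-1}(A)$ is positive definite when $A\in\Gamma_k^+$. Hence $\sigma_{j-1}(A')=T_{j-1}(A)_{11}>0$ for $2\leq j\leq k$, so $A'\in\Gamma_{k-1}^+$ (as an $(n-1)$-dimensional matrix). The same argument gives $B'\in\Gamma_{k-1}^+$.

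\emph{Step 3: superadditivity of $\sigma_m$ on $\Gamma_m^+$, with $m=k-1\geq1$.} If $m=1$, then $\sigma_1$ is linear and equality holds. For $m\geq 2$, let $\Sigma_m(X_1,\dots,X_m)$ be the complete polarization of $\sigma_m$, i.e.\ the symmetric multilinear form with $\Sigma_m(X,\dots,X)=\sigma_m(X)$. Multilinear expansion gives
\begin{equation*}
\sigma_m(A'+B')=\sum_{j=0}^m\binom{m}{j}\Sigma_m(\underbrace{A',\dots,A'}_{m-j},\underbrace{B',\dots,B'}_{j})\,.
\end{equation*}
The terms $j=0$ and $j=m$ are $\sigma_m(A')$ and $\sigma_m(B')$. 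The mixed terms are positive by G\aa rding's theory of hyperbolic polynomials \cite{Garding1959}. The polynomial $\sigma_m$ is hyperbolic with respect to $\mathbbm 1$ and $\Gamma_m^+$ is its hyperbolicity cone, so $\Sigma_m(X_1,\dots,X_m)>0$ whenever all $X_i\in\Gamma_m^+$. Indeed, G\aa rding's inequality gives the stronger bound $\Sigma_m(X_1,\dots,X_m)\geq\prod_i\sigma_m(X_i)^{1/m}$. Dropping the mixed terms proves the claim, and Step 1 then yields the lemma.

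\emph{Expected obstacle.} The main point is Step 3, namely the positivity of the mixed polarized forms on the G\aa rding cone, which I would take from \cite{Garding1959}. The only other subtlety is Step 2, where the membership of the compressions in $\Gamma_{k-1}^+$ comes from the positive definiteness of all $T_{j-1}(A)$ for $j\leq k$, not merely of $T_{k-1}(A)$. This is why the hypothesis $A,B\in\Gamma_k^+$ is used rather than $\Gamma_{k-1}^+$.
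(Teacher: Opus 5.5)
Your proof is correct. It rests on the same mechanism as the paper's: polarize an elementary symmetric polynomial, expand multilinearly, and discard the mixed terms by G\aa rding's positivity. The reduction is organized differently, though.

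The paper polarizes $\sigma_k$ itself in dimension $n$, writing $\langle\rho,T_{k-1}(A+B)\rho\rangle=k\sigma_k(A+B,\dots,A+B,\rho\otimes\rho)$. Since $\rho\otimes\rho$ lies only on the boundary of $\Gamma_k^+$, G\aa rding's theorem is applied with $\rho\otimes\rho+\varepsilon\mathbbm 1$ and then $\varepsilon\to0$, which gives nonnegativity of the mixed terms.

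You instead compress first, via $T_{k-1}(A)_{11}=\sigma_{k-1}(A')$, and polarize $\sigma_{k-1}$ in dimension $n-1$. The mixed terms are in fact the same numbers term by term, because $k\sigma_k(X_1,\dots,X_{k-1},e_1\otimes e_1)$ is the polarization of $\sigma_{k-1}$ evaluated at $X_1',\dots,X_{k-1}'$. In your formulation, however, all arguments lie in the open cone $\Gamma_{k-1}^+$, so no limiting argument is needed.

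The price is your Step 2. It imports the positive definiteness of all $T_{j-1}(A)$, $1\leq j\leq k$, to place the compressions in $\Gamma_{k-1}^+$, whereas the paper's route needs only G\aa rding's theorem on $\Gamma_k^+$ itself.

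In return, because your arguments are interior, G\aa rding's inequality bounds each mixed term from below. Summing these bounds with the binomial weights gives the stronger Minkowski-type estimate $\langle\xi,T_{k-1}(A+B)\xi\rangle^{1/(k-1)}\geq\langle\xi,T_{k-1}(A)\xi\rangle^{1/(k-1)}+\langle\xi,T_{k-1}(B)\xi\rangle^{1/(k-1)}$. This does not come out of the paper's version directly, since $\sigma_k(\rho\otimes\rho)=0$ makes G\aa rding's lower bound trivial there. The strengthening is not needed for the global second order bound, where plain superadditivity suffices.
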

Note that for $k=2$, the Newton tensor $T_1$ is linear. For $k=1$, $T_0=\mathbbm 1$, so superadditivity does not hold.
\begin{proof}
    The proof is based on polarization of $k$-th elementary symmetric polynomials as can be found in \cite[p.~964f.]{Garding1959} or \cite[p.~88]{CaseWang2018}, for instance. If $A_i$, $i\in \{1,\dots,k\}$, are symmetric matrices, then $$\sigma_k(A_1,\dots,A_k)\coloneqq \frac 1{k!} \frac{\partial^k}{\partial t_1\dots \partial t_k}\sigma_k(t_1A_1+\dots+t_k A_k)|_{t_1=\dots=t_k=0}\,.$$ In particular, $\sigma_k(A,\dots,A)=\sigma_k(A)$. Since $$D\sigma_k(C)[H]=k\sigma_k(C,\dots,C,H)=\tr(T_{k-1}(C)H)\,,$$ we can evaluate the Newton tensor at a specific state $\rho\in \R^n$ by setting $H=\rho\otimes \rho$. Then we obtain by multilinearity that $$\langle \rho, T_{k-1}(A+B)\rho\rangle =k\sigma_k(A+B,\dots,A+B, \rho\otimes\rho)=k\sum_{j=0}^{k-1}\binom{k-1}j \sigma_k(A^{(j)},B^{(k-1-j)},\rho\otimes\rho)\,,$$ where $C^{(m)}$ denotes the vector of length $m$ with entries $C$. Note that $$\sigma_k(A^{(j)},B^{(k-1-j)},(\rho\otimes\rho)+\varepsilon \mathbbm 1)>0$$ for every $\varepsilon>0$ by \cite[Theorem 5]{Garding1959} as all entries are in $\Gamma_k^+$. Taking the limit $\varepsilon\to 0$ still gives nonnegativity. Hence, we conclude \begin{align*}
        \langle \rho, T_{k-1}(A+B)\rho\rangle&\geq k\left(\sigma_k(A,\dots, A,\rho\otimes\rho)+\sigma_k(B,\dots,B,\rho\otimes\rho)\right)\\&= \langle \rho, T_{k-1}(A)\rho\rangle+\langle \rho, T_{k-1}(B)\rho\rangle\,,
    \end{align*}which finishes the proof as $\rho$ was arbitrary.
\end{proof}

With this property, we can now state and prove our main coercivity bound. To this end, we denote
\begin{equation*}
E_{k,g}[V]\coloneqq \int_M V^{-n}\sigma_k(B_g(V))\,\mathrm d \vol_g\,.\end{equation*}

\begin{proposition}[Global second order bound]
\label{prop:glob_W12k}
    Let $2\leq k<n/2$ and $(M^n,[g])$ be a smooth, closed, and locally conformally flat Riemannian manifold with $k$-admissible metric $g$. If $u\in \mathcal C_k([g])$ and $V=u^{-q/n}$, then there are  $c_{n,k}, C_{g}>0$ such that
    \begin{equation}\label{eq:global_bound}
    E_{k,g}[V]- E_{k,g}[1]-D_1E_{k,g}[V-1]\geq  c_{n,k} \left(\int_M|\nabla u|^{2k}\,\mathrm d \vol_g+C_{g}\int_M|\nabla u|^{2}\,\mathrm d \vol_g\right).
    \end{equation}
   \end{proposition}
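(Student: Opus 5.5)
The plan is to use an integral form of Taylor's remainder along the affine path $V_t\coloneqq 1+t(V-1)$, $t\in[0,1]$, and to bound its second derivative from below with the superadditivity Lemma \ref{lem:superadd}. Put $\phi\coloneqq V-1$. Then
$$E_{k,g}[V]-E_{k,g}[1]-D_1E_{k,g}[\phi]=\int_0^1(1-t)\,\frac{d^2}{dt^2}E_{k,g}[V_t]\,\mathrm dt\,.$$
The second variation along this path is the computation leading to \eqref{eq:E''}, now carried out at a general point of the path rather than at $t=0$. By \eqref{eq:Schouten_V} the matrix $B_g(V_t)$ is quadratic in $t$. The term $-\frac12|\nabla V_t|^2\mathbbm 1$ is exactly the one that combines with the volume factor $V_t^{-n}$ and the divergence-free property of the Newton tensor (local conformal flatness) to give \eqref{eq:E''}. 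So I expect, with $\psi_t=\phi/V_t$ and $g_t=V_t^{-2}g$,
$$\frac{d^2}{dt^2}E_{k,g}[V_t]=(n-2k)\Big(\int_M T_{k-1}(B_t)(\nabla_{g_t}\psi_t,\nabla_{g_t}\psi_t)_{g_t}\,\mathrm d\vol_{g_t}+(n-2k+1)\int_M\psi_t^2\sigma_k(g_t)\,\mathrm d\vol_{g_t}\Big)\,,$$
where the derivation of \eqref{eq:E''} applies verbatim because $V_t=1\cdot(1+t\phi)$. The catch is that this needs $g_t$ to be $k$-admissible for every $t\in[0,1]$, that is, $B_t\coloneqq B_g(V_t)\in\Gamma_k^+$.

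\textbf{Step 1: admissibility along the path.} I would write $B_t$ in terms of the endpoints. Using $\nabla V_t=t\nabla V$, one gets
$$B_t=(1-t)V_t\,A_g+tV_t\Big(VA_g+\nabla^2V-\tfrac12 V^{-1}|\nabla V|^2\mathbbm 1\Big)+\tfrac{t(1-t)}{2}\,\frac{|\nabla V|^2}{V}\,\mathbbm 1\,,$$
and the middle bracket equals $V^{-1}B_g(V)$. This is a sum of three matrices in $\overline{\Gamma_k^+}$ with nonnegative coefficients: $A_g\in\Gamma_k^+$ by assumption, $B_g(V)\in\Gamma_k^+$ since $u\in\mathcal C_k$, and the last term is a positive multiple of the identity. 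Because the cone is convex, $B_t\in\Gamma_k^+$.

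\textbf{Step 2: the lower bound.} I drop the nonnegative $\psi_t^2$-term. For the gradient term I apply Lemma \ref{lem:superadd} twice to the decomposition from Step 1, using that $T_{k-1}$ is $(k-1)$-homogeneous. This gives
$$T_{k-1}(B_t)\geq \big((1-t)V_t\big)^{k-1}T_{k-1}(A_g)+c\Big(t(1-t)\frac{|\nabla V|^2}{V}\Big)^{k-1}\mathbbm 1\,,$$
where the middle summand has been dropped since it is nonnegative. Evaluating at $\nabla_{g_t}\psi_t$ with the metric factors $V_t^{2}$ and $V_t^{-n}$ yields two pointwise contributions, both integrated against $\int_0^1(1-t)\,\mathrm dt$-type weights:
\begin{itemize}
\item a quadratic one, $\gtrsim \lambda_{\min}(T_{k-1}(A_g))\,|\nabla\psi_t|^2\times$ powers of $V_t$;
\item a $2k$-homogeneous one, $\gtrsim t^{k-1}V^{1-k}|\nabla V|^{2k-2}|\nabla\psi_t|^2\times$ powers of $V_t$.
\end{itemize}
The constant in front of the $2k$-homogeneous term comes only from $c_{n,k}$, while $C_g$ enters through $\lambda_{\min}(T_{k-1}(A_g))$.

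\textbf{Step 3: back to $u$ (the main obstacle).} One computes $\nabla\psi_t=\nabla V/V_t^2$ and $\nabla V=-\frac qn u^{-q/n-1}\nabla u$. After integrating in $t$, the powers of $V$ and $V_t$ must collapse into exactly $|\nabla u|^{2k}$ and $|\nabla u|^2$ with respect to $\mathrm d\vol_g$, with no leftover weight. The $2k$-homogeneous exponent $q=2kn/(n-2k)$ is what makes this scaling work; for instance, $V^{-n}|\nabla V|^{2k}$ is a multiple of $u^{(1-k)\cdot\text{const}}|\nabla u|^{2k}$ up to the exponent bookkeeping. To make the weights pointwise uniform, I would reduce to the one-variable integral
$$\int_0^1 (1-t)\,t^{k-1}(1-t)^{k-1}\,V_t^{-\beta}\,\mathrm dt$$
for the relevant exponent $\beta$, and show it is comparable to the correct power of $V$ uniformly in $V\in(0,\infty)$ (both for $V\to0$ and for $V\to\infty$). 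If such a pointwise comparison fails in one regime, I would instead reparametrize the path differently; the paper hints at an \emph{affine path of a reparametrized conformal factor}. Concretely, I would choose the power $s$ for which interpolating $V^{s}$ linearly makes $|\nabla u|^{2k}$ appear exactly, and check that admissibility along that path still follows from convexity of $\Gamma_k^+$. This exponent check is where I expect the real work to lie.
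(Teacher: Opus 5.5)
Your Steps 1 and 2 are the paper's proof: the same affine path $V_t=1+t(V-1)$, the same convex decomposition of $B_g(V_t)$ giving $k$-admissibility, the same use of \eqref{eq:E''} along the path, and the same superadditivity split. This path already is the ``affine path of a reparametrized conformal factor'', so no other path is needed.

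The gap is Step 3. You leave it open, yet it is exactly what produces the right-hand side. Insert $|\nabla_{g_t}\psi_t|_{g_t}^2=V_t^{-2}|\nabla V|^2$ and $\mathrm d\vol_{g_t}=V_t^{-n}\mathrm d\vol_g$. Note that $V=u^{-q/n}$ gives $|\nabla V|^2V^{-n/k}=c\,|\nabla u|^2$ and $|\nabla V|^{2k}V^{-n}=c\,|\nabla u|^{2k}$ exactly. There is no leftover power of $u$, contrary to your guess $u^{(1-k)\cdot\mathrm{const}}$. With $s=V(x)$ and $V_t=1-t(1-s)$, what you must prove is
$$\int_0^1(1-t)^kV_t^{k-n-3}\,\mathrm dt\ge c_{n,k}\,s^{-\frac nk}\,,\qquad \int_0^1t^{k-1}(1-t)^kV_t^{-n-2}\,\mathrm dt\ge c_{n,k}\,s^{k-n-1}\qquad\text{for all } s>0\,.$$
Up to the factor $n-2k$, the first integral multiplies $\mu_g|\nabla V|^2$, where $\mu_g$ is a lower bound for $T_{k-1}(g^{-1}A_g)$. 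The second multiplies $2^{1-k}\binom{n-1}{k-1}V^{1-k}|\nabla V|^{2k}$.

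The target you set yourself, two-sided comparability uniformly in $V\in(0,\infty)$, is false. Testing it would push you toward an unnecessary change of path.
\begin{itemize}
\item The first integral behaves like $s^{2k-n-2}$ as $s\to0$ and like $s^{-1}$ as $s\to\infty$; neither is $s^{-n/k}$ when $k\ge2$ and $n>2k$.
\item The second behaves like $s^{k-n-1}$ as $s\to0$, but like $s^{-k}$ as $s\to\infty$.
\end{itemize}
Only the one-sided bounds are needed, and they hold precisely because $2k-n-2\le -n/k$, $n/k\ge1$ and $n+1\ge 2k$. The paper proves them in three ranges of $s$:
\begin{itemize}
\item for $s\le 1/4$, restrict the $t$-integral to $[1-2s,1-s]$, where $1-t\ge s$, $t\ge 1/2$ and $V_t\le 3s$;
\item for $s\ge 4$, restrict to $[0,1/s]$, where $1-t\ge 3/4$ and $V_t\le 2$;
\item for $1/4\le s\le 4$, use continuity and compactness.
\end{itemize}
Supplying these two elementary lower bounds completes your argument. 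Without them the inequality is not established.
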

For fixed $n,k$, $C_g$ depends only on the minimal eigenvalue of $T_{k-1}(g^{-1}A_g)$. If $g$ ranges over a compact smooth family contained in $\Gamma_k^+$, $C_{g}$ can be chosen uniformly in $g$.

\begin{proof}
In this proof, we use the parametrization $V=u^{-2k/(n-2k)}$. In the formulation \eqref{eq:Schouten_V}, the total $\sigma_k$-curvature is given by $$\int_M  \sigma_k(g_u^{-1}A_{g_u})\,\mathrm d\vol_{g_u}=\int_M V^{-n} \sigma_k(B_g(V))\,\mathrm d\vol_g\,.$$ 
 We consider the affine interpolation between $g$ and $V^{-2}g$ given by
\begin{equation*}
    g_t\coloneqq V_t^{-2}g\qquad \text{with} \quad V_t\coloneqq 1+t(V-1)>0 \,,\quad 0\leq t\leq1\,.
\end{equation*}
The whole path $g_t$ is $k$-admissible. Indeed, the Schouten tensor for $V_t$ is \begin{equation*}
    B_g(V_t)=V_t(1-t) g^{-1}A_g+\frac{tV_t}{V} B_g(V)+\frac{t(1-t)}{2V} |\nabla V|^2\mathbbm 1\,,
\end{equation*} which is a linear combination of $g^{-1}A_g,B_g(V), \mathbbm 1$ in the convex cone $\Gamma_k^+$ with nonnegative coefficients.
Since $V_t$ is of the form $1+t\phi$ as in \eqref{eq:Vt} with $\phi=V-1$, we can use the computations from Subsection \ref{subsec:conf_Hess} in terms of $$\mathcal E(t)=\int_M\sigma_k(g_t)\,\mathrm d\vol_{g_t}\qquad \text{and}\qquad  \psi_t=\frac{V-1}{V_t}\,.$$ Hence, the second variation formula 
\eqref{eq:E''} gives $$
\mathcal E''(t)\geq (n-2k)
\int_M T_{k-1} (B_g(V_t))(\nabla_{g_t} \psi_t,\nabla_{g_t} \psi_t)_{g_t} \, \mathrm d\vol_{g_t}\,.$$ where we used the nonnegativity of $\sigma_k(g_t)$. Integrating $\mathcal E''(t)$ then yields
\begin{align}\notag
    \mathcal E(1)-\mathcal E(0)-\mathcal E'(0)&=\int_0^1 (1-t)\mathcal E''(t)\,\mathrm dt\\&\geq (n-2k)
\int_0^1(1-t)\int_M T_{k-1} (B_g(V_t))(\nabla_{g_t} \psi_t,\nabla_{g_t} \psi_t)_{g_t} \, \mathrm d\vol_{g_t}\,\mathrm d t\,.\label{eq:E1E0}
\end{align} 

To extract the top and second order gradient contribution from the Newton tensor, we write $B_g(V_t)=X+Y+s\mathbbm 1$ with $X\coloneqq V_t(1-t) g^{-1}A_g$, $Y\coloneqq tV_t B_g(V)/V$, and $s\coloneqq t(1-t)|\nabla V|^2/2V$ and apply superadditivity of the Newton tensor; see Lemma \ref{lem:superadd}. Since $T_{k-1}(Y)\geq 0$, $T_{k-1}$ is $(k-1)$-homogeneous, and $T_{k-1}(\mathbbm 1)=\binom{n-1}{k-1}\mathbbm 1$, superadditivity yields
\begin{equation}\label{eq:T_bdd}
    T_{k-1}(X+Y+s\mathbbm 1)\geq T_{k-1}(X)+\binom{n-1}{k-1}s^{k-1}\mathbbm 1\,.
\end{equation} 
If we use $$\nabla_g \psi_t=V_t^{-2}\nabla_g V\,,\qquad  \mathrm d\vol_{g_t}=V_t^{-n}\mathrm d\vol_{g}\,,\qquad \text{and}\qquad|\nabla_{g_t} \psi_t|^2_{g_t}=V_t^{-2}|\nabla_g V|_g^2$$ and denote by $\mu_g>0$ a lower bound on the bottom of the spectrum of $T_{k-1}(g^{-1}A_g)$ that is  uniform in the basepoint in $M$, then \eqref{eq:E1E0} and \eqref{eq:T_bdd} imply
\begin{align}
\notag \mathcal E(1)-\mathcal E(0)-\mathcal E'(0)\geq \ & (n-2k)\mu_g\int_M |\nabla V|_g^{2} \int_0^1(1-t)^k V_t^{k-n-3}\,\mathrm d t\, \mathrm d\vol_{g}
\\ & +\frac{n-2k}{2^{k-1}}\binom{n-1}{k-1} \int_M |\nabla V|_g^{2k} V^{-(k-1)}\int_0^1t^{k-1}(1-t)^k V_t^{-n-2}\,\mathrm d t\, \mathrm d\vol_{g}\,.\label{eq:E1E0V}
\end{align} 

Since $\mathcal E(1)-\mathcal E(0)-\mathcal E'(0)$ is the left side of \eqref{eq:global_bound}, it remains to prove that the right side of \eqref{eq:E1E0V} is bounded from below by the right side of \eqref{eq:global_bound}. For this purpose, it suffices to prove that there is a $c_{n,k}>0$ such that the following elementary bounds for the one-dimensional inner integrals in \eqref{eq:E1E0V} given by
\begin{align}\label{eq:one_int1}
\int_0^1 (1-t)^k (1-t(1-s))^{k-n-3}\,\mathrm dt&\geq c_{n,k}s^{-\frac nk}\,,\\\label{eq:one_int2}
\int_0^1 t^{k-1}(1-t)^k (1-t(1-s))^{-n-2}\,\mathrm dt&\geq c_{n,k}s^{k-n-1}
\end{align} hold, where $s=V(x)$ is evaluated pointwise at $x\in M$. 

We prove both estimates by distinguishing three different cases of $s$. 

If $0<s\leq 1/4$, then we bound the integral from below by its restriction to $1-2s\leq t\leq 1-s$. Then $t\geq 1/2$, $1-t\geq s$, and $1-t(1-s)\leq 3s$, so \begin{align*}
    \int_{1-2s}^{1-s} (1-t)^k (1-t(1-s))^{k-n-3}\,\mathrm dt&\geq 
    3^{k-n-3}s^{2k-n-2}\geq 3^{k-n-3}s^{-\frac nk}\,,\\\int_{1-2s}^{1-s} t^{k-1}(1-t)^k (1-t(1-s))^{-n-2}\,\mathrm dt&\geq 2^{-(k-1)} 3^{-n-2}s^{k-n-1}\,.
\end{align*} Note that $k-n-3<0$ and $2k-n-2=-n/k-(k-1)(n-2k)/k\leq -n/k$. 

If $s\geq 4$, then we bound the integral from below by its restriction to $0\leq t\leq s^{-1}$. Then $1-t\geq 3/4$ and $1-t(1-s)\leq 2$, so \begin{align*}
\int_{0}^{s^{-1}} (1-t)^k (1-t(1-s))^{k-n-3}\,\mathrm dt&\geq  \left(\frac34\right)^k 2^{k-n-3}s^{-1}\geq \left(\frac34\right)^k 2^{k-n-3}s^{-\frac nk} \,,\\
    \int_{0}^{s^{-1}} t^{k-1}(1-t)^k (1-t(1-s))^{-n-2}\,\mathrm dt&\geq  \left(\frac34\right)^k 2^{-n-2}\int_0^{s^{-1}}t^{k-1}\,\mathrm dt\geq \left(\frac34\right)^k 2^{-n-2}\frac 1ks^{k-n-1} \,.
\end{align*} Note that we used $n\geq 2k$ here. 

Finally, if $1/4\leq s\leq 4$, then the quotients of both sides of the respective inequality are each a continuous positive function in $s$ on a compact interval, so they are bounded from below by some constant. This concludes \eqref{eq:one_int1} and \eqref{eq:one_int2}, and therefore \eqref{eq:global_bound} after reparametrization of the conformal factor $V^{-2}=u^{4k/(n-2k)}$.
\end{proof}

\section{Local stability} 
\label{sec:3}
In this section, we prove the local part, Proposition \ref{prop:loc}. To this end, we first explain how to decompose the spectrum, which is essential to derive the local bound.

\subsection{Spectral decomposition of almost minimizers} On the round sphere, the minimizer is constant up to M\"obius transformations, so there is only one fixed Hessian; cf.~\eqref{eq:HessQ}. Its associated self-adjoint second-order operator is (up to a positive, multiplicative constant) given by $$-\Delta_{g_*}-n\kappa_*(\mathbbm 1-\Pi_0)\,,$$ where $\Pi_0$ is the orthogonal projection onto the constant functions; cf.~Subsection \ref{subsec:conf_Hess}. Thus, the above decomposition on the sphere corresponds to the standard decomposition into spherical harmonics, where the kernel of the Hessian is generated by  multiplication by constants and M\"obius transformations. More precisely, we have 
$$L^2(\mathbb S^n)=\mathcal H_0\oplus \mathcal H_1\oplus E^{\me}_{\Lambda,1}\oplus E^{\hi}_{\Lambda,1}\qquad \text{with}\quad  E^{\me}_{\Lambda,1}=\bigoplus_{\ell\geq 2: \mu_\ell\leq \Lambda }\mathcal H_\ell\quad \text{and}\quad E_{\Lambda,1}^{\hi}=\bigoplus_{\ell\geq 2: \mu_\ell> \Lambda }\mathcal H_\ell\,,$$ where $\Lambda>0$ is a spectral cutoff, $\mathcal H_\ell$ the space of spherical harmonics of degree $\ell$, and $\mu_\ell$ the corresponding eigenvalue. This is the setting used in \cite{FrankPeteranderl2024_sigma2}.

On general locally conformally flat manifolds, the minimizers need not be unique (up to conformal diffeomorphisms), and the kernel need not be generated by a smooth family of minimizers. We therefore fix the spectral decomposition at the center of each Lyapunov--Schmidt chart. To this end, we record the following finite reduction principle.

\begin{lemma}[Reduction to one neighborhood]\label{lem:one_nghb}
 Let $(M,[g])$ not be conformally equivalent to the round sphere. For every $v_0\in \mathcal M_k$, let $\mathcal U(v_0)\subset \mathcal M_k$ denote a neighborhood of $v_0$. Then there are finitely many minimizers $v^{(1)},\dots, v^{(N)}\in \mathcal M_k$ with $N\in \N$ and neighborhoods \begin{equation*}
     \mathcal U_i\coloneqq \mathcal U(v^{(i)})\,,\qquad i\in \{1,\dots,N\}\,,\end{equation*} such that for every $v\in \mathcal M_k$ there are $\Phi\in G$ and $i\in \{1,\dots,N\}$ with \begin{equation}
         \label{eq:fin_cpct_nghb}(v)_\Phi\in \mathcal U_i\,.\end{equation} Moreover, for $1\leq s<\infty$ we have \begin{equation*}
             \|(u)_\Phi (v)_\Phi^{-1}-1\|_{W^{1,s}(g_{(v)_\Phi})}=\|u v^{-1}-1\|_{W^{1,s}(g_{(v)_\Phi})}.
         \end{equation*}
\end{lemma}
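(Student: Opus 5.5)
The plan has two parts: a compactness covering argument for the first claim, and a direct change-of-variables computation for the norm identity.

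\emph{First claim.} Since $(M,[g])$ is not conformally equivalent to the round sphere, the Ferrand--Obata theorem \cite{Fer96,Oba71} gives that $G$ is compact. Proposition \ref{prop:cpct}, (ii), applies to any sequence in $\mathcal M_k$, because minimizers have constant $\sigma_k$-curvature and $F_{k,g}$ is identically $Y_k$ along the sequence. Hence $\mathcal M_k$ is compact modulo $G$, and by the compactness of $G$ it is in fact compact in every $C^m$; the relevant topology is the $C^{2,\alpha}$-topology in which the neighborhoods $\mathcal U(v_0)$ are taken. The sets $\{(w)_{\Phi}: w\in \mathcal U(v_0),\ \Phi\in G\}$, with $v_0$ ranging over $\mathcal M_k$, cover $\mathcal M_k$. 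I would apply compactness directly to the open cover $\{\mathcal U(v_0)\}_{v_0\in\mathcal M_k}$, which already covers $\mathcal M_k$ with $\Phi=\mathbbm 1$, and extract a finite subcover $\mathcal U_1,\dots,\mathcal U_N$. This yields \eqref{eq:fin_cpct_nghb} trivially with $\Phi=\mathbbm 1$. If one prefers to use the group action (for instance when the neighborhoods are not $G$-invariant), one can instead use the compactness of the quotient $\mathcal M_k/G$. The point needing care is that $\mathcal U(v_0)$ is open in $\mathcal M_k$; I take this to be part of the meaning of "neighborhood".

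\emph{Second claim.} Write $\Phi$ for a conformal diffeomorphism, so that $\Phi^*g=J_\Phi^{2/n}g$. By \eqref{eq:conf}, $(u)_\Phi(v)_\Phi^{-1}=(u v^{-1})\circ\Phi$, since the Jacobian factors cancel. Moreover,
$$g_{(v)_\Phi}=(v\circ\Phi)^{2q/n}J_\Phi^{2/n}g=\Phi^*(v^{2q/n}g)=\Phi^*g_v,$$
so $\Phi:(M,g_{(v)_\Phi})\to(M,g_v)$ is an isometry. Pulling back by an isometry preserves $|\nabla f|$, $|f|$ and the volume form, which gives
$$\|(uv^{-1})\circ\Phi-1\|_{W^{1,s}(g_{(v)_\Phi})}=\|uv^{-1}-1\|_{W^{1,s}(g_v)}.$$
I read the metric on the right-hand side of the stated identity as $g_v$, since that is what the isometry produces. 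The only potential obstacle in the whole lemma is this bookkeeping of exponents: one checks that $(v\circ\Phi)^{2q/n}J_\Phi^{2/n}=((v)_\Phi)^{2q/n}$, which holds because $(v)_\Phi^{2q/n}=J_\Phi^{2/n}(v\circ\Phi)^{2q/n}$.
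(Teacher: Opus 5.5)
Your proof is correct. The norm identity is handled exactly as in the paper: $(u)_\Phi(v)_\Phi^{-1}=(uv^{-1})\circ\Phi$ and $g_{(v)_\Phi}=\Phi^*g_v$, so $\Phi$ is an isometry from $(M,g_{(v)_\Phi})$ to $(M,g_v)$. You are also right that the metric on the right-hand side must be $g_v$. That is what the paper's proof actually establishes; the $g_{(v)_\Phi}$ on the right side of the displayed statement is a misprint.

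For the covering claim you take a slightly different route. The paper covers the quotient $\mathcal M_k/G$ by the open sets $\pi(\mathcal U(\tilde v))$ and extracts a finite subcover from compactness of $\mathcal M_k$ modulo $G$ (Proposition \ref{prop:cpct}, (ii)). For that reason its conclusion involves a possibly nontrivial $\Phi$ that moves $v$ into some $\mathcal U_i$. You instead use Ferrand--Obata to get compactness of $G$ and upgrade to compactness of $\mathcal M_k$ itself in every $C^m$, as the paper itself notes in the proof of Lemma \ref{lem:dist_notions}, (i). You then take a finite subcover of $\mathcal M_k$ directly, which gives the stronger conclusion with $\Phi=\mathbbm 1$.

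This is legitimate, and it shows that in the non-spherical case the conformal group plays no essential role in this step. The paper's quotient formulation only needs the neighborhoods chosen up to symmetry. It thereby mirrors the spherical situation, where $\mathcal M_k$ is not compact and one neighborhood modulo M\"obius transformations suffices. In the setting of the lemma, however, it gains nothing over your argument.
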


As a consequence, after a conformal diffeomorphism and taking a subsequence, any local argument can be carried out in one fixed neighborhood as the background minimizer lies in one of finitely many fixed neighborhoods. All estimates thereafter can be taken uniformly in these neighborhoods.

\begin{proof}
    If $\pi:\mathcal M_k\to \mathcal M_k/G$ denotes the quotient map, then $\{\pi(\mathcal U(\tilde v)):\tilde v\in \mathcal M_k\}$ is an open cover of $\mathcal M_k/G$. By compactness of $\mathcal M_k/G$, there are finitely many $v^{(1)},\dots,v^{(N)}\in  \mathcal M_k$ such that $\pi(\mathcal U_i)$ with $\mathcal U_i=\mathcal U(v^{(i)})$ covers $\mathcal M_k/G$. Hence, there exists a $\Phi\in G$ and an $i$ such that \eqref{eq:fin_cpct_nghb} holds. 
    Note that applying a conformal diffeomorphism $\Phi$ does not change the distance to some $w\in \mathcal C_k([g])$ as \begin{equation*}
        \|(w)_\Phi (v)_\Phi^{-1}-1\|_{W^{1,p}(g_{(v)_\Phi})}=\|(w v^{-1}-1)\circ \Phi\|_{W^{1,p}(\Phi^*g_{v})}=\|w v^{-1}-1\|_{W^{1,p}(g_{v})}
    \end{equation*} for all $1\leq p< \infty$. 
\end{proof}

For the rest of the local argument, let $v\in \mathcal M_k$ with $g=g_v$. Recall from Subsection \ref{subsec:LSL} that $K_g$ is the mean zero kernel of the Hessian $Q_g$, $\alpha_g$ is the graph, and $\Sigma_g$ denote the reduced functional from Lemma \ref{lem:ls}. Choose a $\Lambda>0$ that is not in the spectrum of $Q_g$.

We further denote by $E^{\me}_{\Lambda,g}$ the direct sum of all eigenspaces of $Q_{g}$ with positive eigenvalues below $\Lambda>0$ and by $E^{\hi}_{\Lambda, g}$ the orthogonal complement of $\R1\oplus K_g\oplus E^{\me}_{\Lambda, g}$. Therefore, we have the decomposition 
\begin{equation}\label{eq:space_decomp}
    L^2(g)=\R 1\oplus K_g\oplus E^{\me}_{\Lambda,\hat v}\oplus E^{\hi}_{\Lambda,\hat v}\,.
\end{equation}
All orthogonal complements and projections in this section are taken with respect to $L^2(g)$. 
Note that all norms on finite-dimensional spaces are equivalent and by elliptic regularity, the eigenfunctions for finitely many eigenvalues are smooth; in particular we have 
\begin{equation*}
    \|z\|_{C^m}\leq C_{m,\Lambda} \|z\|_{W^{1,2}(g)}\,,\qquad z\in K_g\oplus E_{\Lambda,g}^\me\,,\ m\in \N\,.
\end{equation*}
As $g$ is minimizing, its Hessian is nonnegative, so by ellipticity we obtain a spectral gap on $K_g^\perp$ inside $N_g^{2,\alpha}$; cf.~Subsection \ref{subsec:LSL}. Note that for the finitely many neighborhoods constructed in Lemma \ref{lem:one_nghb}, there is a lower bound on their spectral gap uniformly on $M$.
 
We now prove the existence of a multiplicative decomposition.
\begin{lemma}[Local frequency decomposition]\label{lem:decomp}
 Fix $v\in \mathcal M_k$ and set $g=g_v$. There is an $\varepsilon_0>0$ such that for every $u\in \mathcal C_k([g_0])$ with $\|u v^{-1}-1\|_{W^{1,2k}(g)}<\epsilon_0$ there are unique  $c>0$, $\xi\in K_g$, $z\in E^{\me}_{\Lambda,g}$, and $y\in E^{\hi}_{\Lambda,g}\cap W^{1,2k}(g)$  close to $(1,0,0,0)$ such that \begin{equation}
  \label{eq:decomp}u=cv(\alpha_g(\xi)+z+y)\,.
   \end{equation} Moreover, for $p\in\{2,2k\}$, we find a constant $C>0$ depending on $\Lambda$, $M$, $g$, $k$, and the spectral gap of the Hessian $Q_g$ such that \begin{equation}\label{eq:mult_bdd}
      C^{-1} \|uv^{-1}  -1\|_{W^{1,p}(g)}\leq  |c-1|+|\xi|+\|z\|_{W^{1,p}(g)}+\|y\|_{W^{1,p}(g)}\leq C \|uv^{-1}  -1\|_{W^{1,p}(g)}\,. \end{equation} 
       \end{lemma}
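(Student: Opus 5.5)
The plan is to set $w\coloneqq uv^{-1}-1$, so that $\|w\|_{W^{1,2k}(g)}<\varepsilon_0$, and to obtain the decomposition from the implicit function theorem applied to a map that is well defined on $W^{1,p}(g)$. Let $\Pi_1$, $\pi_{K}$, $\pi_{\me}$ and $\pi_{\hi}$ be the $L^2(g)$-orthogonal projections onto the four summands of \eqref{eq:space_decomp}. The first three have finite rank with smooth images, so they extend continuously to $L^1$ and in particular to $W^{1,p}(g)$. The last one is $\pi_\hi=\mathrm{Id}-\Pi_1-\pi_K-\pi_\me$, which is then bounded on $W^{1,p}(g)$ for both $p\in\{2,2k\}$.

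Since $\alpha_g(\xi)=1+\xi+\Phi_g(\xi)$ with $\Phi_g(\xi)\in K_g^\perp\cap N_g^{2,\alpha}$, equation \eqref{eq:decomp} can be rewritten as
\[
1+w=c\bigl(1+\xi+\Phi_g(\xi)+z+y\bigr).
\]
Here $\Phi_g(\xi)$ has mean zero but may have components in $E^\me_{\Lambda,g}$ and $E^\hi_{\Lambda,g}$. Projecting onto each summand gives four equations:
\[
c=\Pi_1(1+w)=1+\bar w_g,\qquad c\,\xi=\pi_K w,\qquad c\,(z+\pi_\me\Phi_g(\xi))=\pi_\me w,\qquad c\,(y+\pi_\hi\Phi_g(\xi))=\pi_\hi w .
\]
Note that $\pi_K\Phi_g(\xi)=0$, because $\Phi_g(\xi)\in K_g^\perp$. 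The system is triangular. First $c$ is determined explicitly and lies close to $1$. Next, $\xi=\pi_K w/c$ is small, since $\pi_Kw$ is controlled by $\|w\|_{L^1}$ and hence lies in $\mathcal U_g$ for small $\varepsilon_0$. Finally, $z=\pi_\me w/c-\pi_\me\Phi_g(\xi)$ and $y=\pi_\hi w/c-\pi_\hi\Phi_g(\xi)$ are given explicitly. This yields both existence and uniqueness, with no fixed-point argument needed beyond the explicit formulas, provided everything is restricted to the neighborhood where $\xi\in\mathcal U_g$. The regularity $y\in W^{1,2k}(g)$ follows because $w\in W^{1,2k}$ and $\Phi_g(\xi)$ is smooth.

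For the upper bound in \eqref{eq:mult_bdd}, I would use three facts:
\[
|c-1|\le C\|w\|_{L^1},\qquad |\xi|\le C\|w\|_{L^1},\qquad \|\Phi_g(\xi)\|_{C^1}\le C|\xi|^2\le C|\xi| .
\]
The last comes from $\Phi_g(0)=0$, $D\Phi_g(0)=0$ and the $C^{m,\alpha}$ bounds in Lemma \ref{lem:ls}. Together with the boundedness of $\pi_\me$ and $\pi_\hi$ on $W^{1,p}$, these give
\[
\|z\|_{W^{1,p}}+\|y\|_{W^{1,p}}\le C\|w\|_{W^{1,p}} .
\]
For the lower bound, I would expand
\[
w=(c-1)+c\bigl(\xi+\Phi_g(\xi)+z+y\bigr)
\]
and apply the triangle inequality. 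This uses the equivalence of norms on the finite-dimensional space $K_g$, the $C^1$ bound $\|\Phi_g(\xi)\|_{W^{1,p}}\le C|\xi|$, and $c\le 2$.

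Two points need care. Constants must be uniform for $p=2$ as well as $p=2k$. In particular, $\pi_\hi$ must be bounded on $W^{1,2k}(g)$, and this is immediate from the identity $\pi_\hi=\mathrm{Id}-(\text{finite-rank smoothing})$. Also, the uniqueness statement should be understood as uniqueness among quadruples close to $(1,0,0,0)$ in the sense of \eqref{eq:mult_bdd}. Positivity of $c$ and the requirement $\xi\in\mathcal U_g$ are what fix $\varepsilon_0$. The dependence of $C$ on the spectral gap enters only through the choice of $\Lambda$ and through the bounds on $\Phi_g$ from Lemma \ref{lem:ls}.
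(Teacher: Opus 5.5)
Your argument is correct, and it takes a different route from the paper. The paper treats $\Xi(c,\xi,z,y)=cv(\alpha_g(\xi)+z+y)$ as a black box. It computes $D_{(1,0,0,0)}\Xi=v(c+\xi+z+y)$ and applies the inverse function theorem in $W^{1,2}(g)$ and in $W^{1,2k}(g)$. It then uses local uniqueness to see that the two decompositions agree on a smaller neighborhood, and gets \eqref{eq:mult_bdd} from the mean value theorem for $\Xi$ and $\Xi^{-1}$ on convex neighborhoods.

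You instead use the fact that $\Phi_g(\xi)\in K_g^\perp\cap N_g^{2,\alpha}$ has no component in $\R 1\oplus K_g$. Projecting $1+w=c(1+\xi+\Phi_g(\xi)+z+y)$ onto the summands of \eqref{eq:space_decomp} therefore gives a triangular system with an explicit inverse: $c=1+\bar w_g$, $\xi=\pi_K w/c$, $z=\pi_\me w/c-\pi_\me\Phi_g(\xi)$, and $y=\pi_\hi w/c-\pi_\hi\Phi_g(\xi)$.

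This buys you three things.
\begin{itemize}
\item Uniqueness holds among all admissible quadruples with $c>0$ and $\xi\in\mathcal U_g$, not only near $(1,0,0,0)$.
\item The decompositions for $p=2$ and $p=2k$ coincide automatically, because the formulas involve only $L^2(g)$-projections.
\item The two-sided bounds follow directly from boundedness of the projections on $W^{1,p}(g)$ and from $\|\Phi_g(\xi)\|_{C^1}\le C|\xi|$. This linear bound already follows from the derivative bounds in Lemma~\ref{lem:ls}, so you do not need the quadratic one.
\end{itemize}

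The paper's inverse function theorem argument is softer. It would still work if the graph map had components along $\R 1\oplus K_g$. For the graph actually produced by Lemma~\ref{lem:ls}, your explicit inversion is shorter and more transparent.
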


\begin{proof}
To construct $c$, $\xi$, $y$, and $z$, we apply the inverse function theorem to the decomposition in \eqref{eq:decomp} as a function $ \Xi:\mathcal U_v\to W^{1,p}(g)$ with \begin{equation*}
  \Xi(c,\xi,z,y)\coloneqq c v(\alpha_g(\xi)+z+y)
\end{equation*} on a sufficiently small neighborhood $\mathcal U_v$ of $(1,0,0,0)$ in $\R\times K_g\times E_{\Lambda,g}^\me\times (E_{\Lambda,g}^\me\cap W^{1,p}(g))$. Since $\alpha_g(0)=1$ and $D_0\Phi_g=0$ by Lemma \ref{lem:ls}, its differential is given by $$D_{(1,0,0,0)}\Xi(c,\xi,z,y)=v(c+\xi+z+y)\,,$$ and thus an isomorphism by \eqref{eq:space_decomp}. To this end, we have to verify boundedness of the projections onto the spaces in \eqref{eq:space_decomp} in $W^{1,p}(g)$. For the low frequency spaces, boundedness of the finite-rank projections in $W^{1,p}(g)$ is immediate, and the projection onto $E_{\Lambda,g}^\hi$ can be expressed in terms of the identity and the lower frequency projections. Multiplication by $v$ is also a bounded isomorphism on $W^{1,p}(g)$ as $v$ is smooth, positive, and bounded away from $0$. Uniqueness of the inverse function theorem shows that the decompositions in $W^{1,2}(g)$ and $W^{1,2k}(g)$ coincide on a potentially smaller neighborhood. 

We know that the derivatives of $\Xi$ and by the inverse function theorem the derivatives of $\Xi^{-1}$ are locally bounded. After shrinking the domains of $\Xi$ and $\Xi^{-1}$ once more if necessary, we can ensure that the domains are convex, so line segments stay in the domains. Then by the mean value theorem, we find that
 \begin{align*}
     \Xi(c,\xi,z,y)-\Xi(1,0,0,0)&=\int_0^1 D_{1+t(c-1),t\xi,tz,ty}(\Xi)[c-1,\xi,z,y]\,\mathrm dt \,,\\\Xi^{-1}(u)-\Xi^{-1}(v)&=\int_0^1 D_{v+t(u-v)}(\Xi^{-1})[u-v]\,\mathrm d t\,,
 \end{align*} so taking the supremum norm of the differential yields the bounds in \eqref{eq:mult_bdd}. Multiplication by  $v$ and $v^{-1}$ is bounded on $W^{1,p}(g)$, so $\|u-v\|_{W^{1,p}(g)}$ is equivalent to $\|uv^{-1}-1\|_{W^{1,p}(g)}$. This proves the last part \eqref{eq:mult_bdd}.\end{proof}

 \subsection{Deficit splitting and frequency bounds}
For $u\in \mathcal C_k([g_0])$ decomposed as in Lemma~\ref{lem:decomp}, we write $$u=cv(\alpha_g(\xi)+z+y)=cv(\alpha_g(\xi)+z)(1+\eta)$$ with $\eta\coloneqq y/(\alpha_g(\xi)+z)$ and consider the metrics 
$$\tilde g\coloneqq (\alpha_g(\xi)+z)^{\frac{4k}{n-2k}} g\qquad \text{and} \qquad g_u=u^{\frac{4k}{n-2k}}g_0= (c(1+\eta))^{\frac{4k}{n-2k}} \tilde g\,.$$

For sufficiently small $|\xi|+\|z\|_{W^{1,2}(g)}$, the factors $(\alpha_g(\xi)+z)$ and $(1+\eta)$ inherit the smoothness and positivity of $u$, and the intermediate metric $\tilde g\coloneqq (\alpha_g(\xi)+z)^{4k/(n-2k)} g$ is $k$-admissible. The metric identity above and $k$-admissibility of $g_u$ also give $1+\eta\in \mathcal C_k([\tilde g])$. These properties are an advantage over the additive decomposition in \cite{FrankPeteranderl2024_sigma2} and are used below. By conformal covariance and scaling invariance, $F_{k,g_0}[u]=F_{k,g}[\alpha_g(\xi)+z+y]$, so we obtain the exact splitting 
\begin{align}\notag
    F_{k,g}[u]-Y_k=&\ F_{k,g}[\alpha_g(\xi)]-Y_k+ (F_{k,g}[\alpha_g(\xi)+z] -F_{k,g}[\alpha_g(\xi)]) \notag\\\notag&+ \left(F_{k,\tilde g}[1+\eta]-F_{k,\tilde g}[1]-D_1F_{k,\tilde g}[\eta]\right)+D_{\alpha_g(\xi)+z}F_{k, g}[y]\\\eqqcolon&\ I^\lo(\xi)+ I^{\me}(\xi,z)+I^\hi(\eta)+\tilde I(\xi,z,y)\,.\label{eq:deficit_split}\end{align}
 Here $D_1F_{k,\tilde g}[\eta]=D_{\alpha_g(\xi)+z}F_{k, g}[y]$ follows immediately from $F_{k,\tilde g}[1+t\eta]=F_{k, g}[\alpha_g(\xi)+z+ty]$. We dealt with the low frequencies using the {\L}ojasiewicz inequality in Subsection \ref{subsec:LSL}. In the remainder of this subsection, we deal with the medium frequency term $I^{\me}$ and the first order derivative term $\tilde I$. We then treat the more involved higher frequency term $I^\hi$. After fixing finitely many neighborhoods as in Lemma \ref{lem:one_nghb}, the constant $C>0$ can be chosen uniformly over these neighborhoods by taking the minimum.

\begin{lemma}[Medium frequency term]\label{lem:med} If $|\xi|+\|z\|_{W^{1,2k}(g)}$ is sufficiently small, then there is a constant $C>0$ such that $$I^\me(\xi z)\geq C \|z\|_{W^{1,2}(g)}^2\,.$$
\end{lemma}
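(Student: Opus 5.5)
We prove the bound by a second order Taylor expansion of $t\mapsto F_{k,g}[\alpha_g(\xi)+tz]$ on $[0,1]$. Write
$$I^\me(\xi,z)=D_{\alpha_g(\xi)}F_{k,g}[z]+\int_0^1(1-t)\,D^2_{\alpha_g(\xi)+tz}F_{k,g}[z,z]\,\mathrm dt\,.$$

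\textbf{First order term.} Since $z\in E^\me_{\Lambda,g}$ is $L^2(g)$-orthogonal to $\R1\oplus K_g$, it has mean zero and satisfies $\pi_{K_g}z=0$. Hence \eqref{eq:LS_red_der} of Lemma \ref{lem:ls} gives $D_{\alpha_g(\xi)}F_{k,g}[z]=D_\xi\Sigma_g[\pi_{K_g}z]=0$. This is the point where the Lyapunov--Schmidt graph is used rather than a plain perturbation of $1$. The first order term therefore vanishes identically, with no error term linear in $z$.

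\textbf{Second order term.} $E^\me_{\Lambda,g}$ is finite-dimensional and consists of smooth eigenfunctions, so $\|z\|_{C^{2,\alpha}}\leq C_\Lambda\|z\|_{W^{1,2}(g)}$. Likewise $\|\alpha_g(\xi)-1\|_{C^{2,\alpha}}\leq C|\xi|$ by Lemma \ref{lem:ls}. The functional $F_{k,g}$ is real-analytic, in particular $C^3$, on a small $C^{2,\alpha}$-neighborhood of $1$. Thus for $t\in[0,1]$,
$$\bigl|D^2_{\alpha_g(\xi)+tz}F_{k,g}[z,z]-D^2_1F_{k,g}[z,z]\bigr|\leq C\bigl(|\xi|+\|z\|_{C^{2,\alpha}}\bigr)\|z\|_{C^{2,\alpha}}^2\leq C'\bigl(|\xi|+\|z\|_{W^{1,2}(g)}\bigr)\|z\|_{W^{1,2}(g)}^2\,.$$
Here the bilinear form $D^2_1F_{k,g}$ is $Q_g$ up to the change of parametrization, i.e.\ a positive factor. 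The factor comes from $u=1+t\phi$ versus $V=1+t\phi$ in the inverse parametrization. These agree to first order and differ only by a second order term, which is paired with $D_1F_{k,g}=0$ since $g=g_v$ is critical. Note that $\|z\|_{W^{1,2}(g)}\leq C\|z\|_{W^{1,2k}(g)}$, so the smallness hypothesis on $|\xi|+\|z\|_{W^{1,2k}(g)}$ also makes $|\xi|+\|z\|_{W^{1,2}(g)}$ small.

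\textbf{Coercivity.} On $E^\me_{\Lambda,g}$ the form $Q_g$ is a finite sum over eigenvalues in $(0,\Lambda)$. Hence $Q_g(z)\geq\lambda_{\min}\|z\|_{L^2(g)}^2$, where $\lambda_{\min}>0$ is the smallest positive eigenvalue, i.e.\ the spectral gap. By equivalence of norms on this finite-dimensional space, $Q_g(z)\geq c\|z\|_{W^{1,2}(g)}^2$. Combining with the previous step,
$$I^\me\geq\tfrac12\bigl(c-C'(|\xi|+\|z\|_{W^{1,2}(g)})\bigr)\|z\|_{W^{1,2}(g)}^2\geq \tfrac c4\|z\|^2_{W^{1,2}(g)}$$
once $|\xi|+\|z\|_{W^{1,2k}(g)}$ is small enough.

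\textbf{Main obstacle.} The delicate step is making sure the first order term vanishes exactly rather than only up to $O(|\xi|\,\|z\|)$. A cross term of that size could not be absorbed into $\|z\|^2$, since $|\xi|$ may be much larger than $\|z\|$. This is exactly what \eqref{eq:LS_red_der} provides, because $z\perp K_g$. The rest reduces to identifying $D_1^2F_{k,g}$ with a positive multiple of $Q_g$ and to tracking uniformity of the constants over the finitely many charts of Lemma \ref{lem:one_nghb}. For the latter, the spectral gap, $\Lambda$ and the $C^3$ bounds can be fixed per chart and the minimum taken.
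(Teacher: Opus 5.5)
Your proposal is correct and follows the same route as the paper. The linear term $D_{\alpha_g(\xi)}F_{k,g}[z]$ vanishes exactly by \eqref{eq:LS_red_der}, since $z\perp \R1\oplus K_g$, and the quadratic term is handled by continuity of $D^2F_{k,g}$ near $1$, the spectral gap of $Q_g$ on the finite-dimensional space $E^\me_{\Lambda,g}$, and equivalence of norms there. You spell out what the paper compresses into ``continuity in $\xi$ and $z$ and Taylor's theorem'', with one small imprecision: $u=1+t\phi$ and $V=1+t\phi$ agree to first order only up to the factor $-\frac{2k}{n-2k}$, which is why $Q_g$ carries the prefactor $(\frac{2k}{n-2k})^2$, so in fact $D^2_1F_{k,g}=Q_g$ exactly.
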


\begin{proof}
    The first order equation \eqref{eq:LS_red_der} from the Lyapunov--Schmidt reduction gives $D_{\alpha_g(\xi)}F_{k,g}[z]=0$. As the Hessian has a positive spectral gap on $E_{\Lambda,g}^\me$ at $\xi=0$, continuity in $\xi$ and $z$ and Taylor's theorem prove the claim. Note that all norms on $E_{\Lambda,g}^\me$ are equivalent.
\end{proof}

\begin{lemma}[Mixed frequency term] \label{lem:mix}
    If $|\xi|+\|z\|_{W^{1,2k}(g)}+\|y\|_{W^{1,2k}(g)}$ is sufficiently small, then there is a $C>0$ such that $$|\tilde I(\xi,z,y)|\leq C(|\xi|+\|z\|_{W^{1,2}(g)})\|z\|_{W^{1,2}(g)}\|y\|_{W^{1,2}(g)}\,.$$
\end{lemma}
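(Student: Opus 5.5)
The plan is to write $\tilde I(\xi,z,y)=D_{\alpha_g(\xi)+z}F_{k,g}[y]$ and expand it in $z$ around $\alpha_g(\xi)$, exploiting two cancellations: the Lyapunov--Schmidt identity \eqref{eq:LS_red_der} and the orthogonality of $y\in E^\hi_{\Lambda,g}$ to the lower frequencies. First, since $y$ has mean zero (it is orthogonal to $\R1$) and $y\perp K_g$, we have $\pi_{K_g}y=0$, so \eqref{eq:LS_red_der} gives $D_{\alpha_g(\xi)}F_{k,g}[y]=0$. Hence
\begin{equation*}
\tilde I(\xi,z,y)=\int_0^1 D^2_{\alpha_g(\xi)+sz}F_{k,g}[z,y]\,\mathrm ds .
\end{equation*}
Note that $z$ is smooth with all $C^m$ norms controlled by $\|z\|_{W^{1,2}(g)}$, and $\alpha_g(\xi)$ is bounded in every $C^m$ by Lemma~\ref{lem:ls}. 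Thus the base points $w_s=\alpha_g(\xi)+sz$ range over a $C^{m}$-small neighborhood of $1$.

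Second, I would split $D^2_{w_s}F_{k,g}[z,y]=D^2_1F_{k,g}[z,y]+\bigl(D^2_{w_s}F_{k,g}-D^2_1F_{k,g}\bigr)[z,y]$. The first term is, up to the constant $(2k/(n-2k))^2$ and the change of parametrization, the polarized conformal Hessian $Q_g(z,y)$ coming from \eqref{eq:F''0}. Since $g=g_v$ has constant $\sigma_k$-curvature, the change between the standard and inverse parametrizations does not affect the Hessian at the critical point, because the first-order correction pairs with the vanishing first variation. The operator $Q_g$ is self-adjoint and $z\in E^\me_{\Lambda,g}$ is a finite sum of eigenfunctions, so $Q_gz\in E^\me_{\Lambda,g}$, which is $L^2(g)$-orthogonal to $y$. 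Therefore $D^2_1F_{k,g}[z,y]=0$.

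It remains to bound the difference term by $C\|w_s-1\|_{C^{m}}\|z\|_{W^{1,2}}\|y\|_{W^{1,2}}$, with $\|w_s-1\|_{C^m}\lesssim|\xi|+\|z\|_{W^{1,2}(g)}$. This is the main obstacle. I would write the second variation explicitly, as a polynomial expression in $w,\nabla w,\nabla^2w$ bilinear in $(z,y)$ together with the volume-normalization terms. Then I would use that the Newton tensor is divergence free, as in the derivation of \eqref{eq:E''}, to integrate by parts so that at most one derivative falls on $y$ and all remaining derivatives fall on $z$ and on $w$. Concretely, the terms take the form $\int T_{k-1}(B(w))(\nabla\cdot,\nabla\cdot)$ plus zeroth-order terms with smooth coefficients depending analytically on the $2$-jet of $w$; the terms where $\nabla^2 y$ appears are moved onto $z$ by integration by parts. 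The coefficients are analytic in the $2$-jet of $w$ near $w=1$, so the coefficient differences are $O(\|w-1\|_{C^{3}})$ pointwise. Hölder's inequality, with $z$ measured in $C^2$, then gives the desired bound with $\|y\|_{W^{1,1}}\le C\|y\|_{W^{1,2}}$.

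Smallness of $\|y\|_{W^{1,2k}}$ is needed only to keep $w_s+ty$ admissible and positive, so that the integrals are defined. In fact no higher power of $y$ enters, since $\tilde I$ is linear in $y$. Integrating over $s$ then concludes the proof, with constants uniform over the finitely many charts from Lemma~\ref{lem:one_nghb}.
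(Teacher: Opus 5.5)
Your proposal is correct and follows essentially the paper's argument. The Lyapunov--Schmidt identity \eqref{eq:LS_red_der} kills $D_{\alpha_g(\xi)}F_{k,g}[y]$, the fundamental theorem of calculus in $z$ produces the integrated second variation, orthogonality of $E^\me_{\Lambda,g}$ and $E^\hi_{\Lambda,g}$ gives $D^2_1F_{k,g}[z,y]=0$, and smooth dependence on the finite-dimensional data $(\xi,z)$ supplies the extra factor $|\xi|+\|z\|_{W^{1,2}(g)}$; your integration-by-parts bookkeeping only makes explicit the Taylor/Cauchy--Schwarz step the paper leaves implicit (and since $\tilde I$ is linear in $y$, smallness of $y$ is not actually needed for this bound).
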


Therefore, this term is of higher order  and hence negligible for our local analysis. 

\begin{proof} The equation \eqref{eq:LS_red_der} from the Lyapunov--Schmidt reduction gives $D_{\alpha_g(\xi)}F_{k,g}[y]=0$. Hence, by the fundamental theorem of calculus, we obtain $$D_{\alpha_g(\xi)+z}F_{k,g}[y]=\int_0^1 D^2_{\alpha_g(\xi)+tz} F_{k,g}[z,y]\,\mathrm dt\,,$$ which vanishes for $z=0$. Recall that all norms of $z$ in the fixed finite-dimensional space $E^\me_{\Lambda,g}$ are equivalent. Note that $D_1^2F_{k,g}[z,y]=0$ by orthogonality. Since this integral density depends smoothly on the finite-dimensional variables $\xi$ and $z$, Taylor's theorem together with the Cauchy--Schwarz inequality provides the desired estimate. 
\end{proof}

Although $W^{1,2k}$ does not embed into $L^\infty$ for $n>2k$, positive scalar curvature gives the lower bound needed to pass from $V=(1+\eta)^{-q/n}$ to $\eta$. This will be the final ingredient for the proof of our high frequency bound.

\begin{lemma}[A Harnack lower bound]\label{lem:Har} Let $h$ be a smooth metric on a closed, connected manifold~$M$. There are $\varepsilon>0$ and $\alpha_*>0$  depending on $h$, $n$, and $k$ such that for all $1+y\in \mathcal C_k([h])$ with $\|y\|_{W^{1,2k}(h)}<\varepsilon$ we have $$1+y\geq  \alpha_*\,.$$
\end{lemma}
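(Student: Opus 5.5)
We will work in the Yamabe parametrization of the conformal factor. Write $w\coloneqq 1+y$ and define $W$ by $w^{\frac{4k}{n-2k}}=W^{\frac{4}{n-2}}$, i.e.\ $W=w^{\frac{k(n-2)}{n-2k}}$. Since $h_w\coloneqq w^{4k/(n-2k)}h$ is $k$-admissible with $k\geq 1$, we have $\sigma_1(h_w)>0$, so the scalar curvature $R_{h_w}$ is positive. The conformal transformation law for the scalar curvature then gives
\begin{equation*}
-\tfrac{4(n-1)}{n-2}\Delta_h W+R_hW=R_{h_w}W^{\frac{n+2}{n-2}}\geq 0 .
\end{equation*}
Thus $W$ is a positive supersolution of the linear operator $L_h\coloneqq -\tfrac{4(n-1)}{n-2}\Delta_h+R_h$, and $R_h$ is a smooth bounded potential.

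The key step is a weak Harnack inequality for nonnegative supersolutions (Trudinger, or Gilbarg--Trudinger, Theorem~8.18), applied on a finite cover of $M$ by coordinate balls $B_{2r}(x_i)$. It gives
\begin{equation*}
\inf_{B_r(x_i)}W\geq c\, r^{-n/p_0}\|W\|_{L^{p_0}(B_{2r}(x_i))}
\end{equation*}
for some $p_0>0$, with $c$ depending only on $h$ and $n$. This bound is only useful once $W$ has nontrivial mass in each ball, and that is where the smallness of $y$ enters.

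The smallness comes from the Sobolev embedding. Since $\|y\|_{W^{1,2k}(h)}<\varepsilon$ and $2k<n$, we have $\|y\|_{L^{1}(h)}\leq C\|y\|_{L^{2k}(h)}\leq C\varepsilon$. By Chebyshev's inequality, the set $\{w<1/2\}=\{y<-1/2\}$ has measure at most $2C\varepsilon$. Choose $\varepsilon$ so small that this is less than half of $\min_i \vol_h(B_{2r}(x_i))$. Then on each ball, $W\geq 2^{-k(n-2)/(n-2k)}$ on at least half the ball, so $\|W\|_{L^{p_0}(B_{2r}(x_i))}$ is bounded below by a constant depending only on $h,n,k$. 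The weak Harnack inequality then gives $\inf_{B_r(x_i)}W\geq c_0$ for every $i$. Converting back, $w=W^{\frac{n-2k}{k(n-2)}}\geq c_0^{\frac{n-2k}{k(n-2)}}\eqqcolon\alpha_*$.

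The main obstacle is that the weak Harnack inequality is usually stated for $L u\geq 0$ with a zeroth-order coefficient of favorable sign, whereas $R_h$ may have either sign (although, $h$ being $k$-admissible here, $R_h>0$ in our applications). I would handle a general sign by writing the supersolution inequality as $-\Delta W+\tfrac{n-2}{4(n-1)}R_h^+W\geq \tfrac{n-2}{4(n-1)}R_h^-W$. The Gilbarg--Trudinger result allows bounded coefficients of arbitrary sign in the zeroth-order term, with constants depending on their $L^\infty$ bound, so this only affects constants. No connectedness beyond finiteness of the cover is needed. Connectedness is only implicitly used to make the constants uniform.
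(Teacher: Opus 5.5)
Your proof is correct and follows essentially the same route as the paper: the Yamabe parametrization $W=(1+y)^{k(n-2)/(n-2k)}$, positive scalar curvature making $W$ a positive supersolution, the weak Harnack inequality of Gilbarg--Trudinger (Theorem~8.18, which indeed allows a bounded zeroth-order coefficient of either sign) on a finite cover, and Chebyshev's inequality. The only difference is bookkeeping: you apply the weak Harnack inequality ball by ball and make $\vol_h(\{1+y<1/2\})$ small relative to every ball of the cover, whereas the paper uses connectedness to chain the balls into the global bound $\|W\|_{L^r(h)}\leq C\inf_M W$ and then only needs $\vol_h(\{1+y\geq 1/2\})\geq \frac12\vol_h(M)$; consequently your version does not use connectedness at all, contrary to your last remark.
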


If we fix the spectral cutoff $\Lambda$, taking $\xi$ and $z$ sufficiently small ensures that the background metrics $\tilde g$ belong to a sufficiently small $C^2$-neighborhood of $g$ that can be chosen independently of $\Lambda$. The proof of the lemma gives constants $\varepsilon$ and $\alpha_*$ that are uniform in $\tilde g$ and independent of $\Lambda$.

\begin{proof}
    We use the Yamabe parametrization, that is, $$W=(1+y)^{\frac{k(n-2)}{n-2k}}\,.$$ Since $\Gamma_k^+\subset \Gamma_1^+$, $W^{4/(n-2)}h$ has positive scalar curvature. Thus, the conformal transformation formula for the scalar curvature  gives $$-\frac{4(n-1)}{n-2}\Delta_{h} W+R_{h} W=R_{(1+y)^{\frac{2q}n}h} W^{\frac{n+2}{n-2}}>0\,;$$ see \cite{LeeParker1987}, for instance. Thus, $W$ is a positive supersolution of a uniformly elliptic linear second-order equation with uniformly bounded zeroth order coefficient. By Harnack's inequality (see \cite[Theorem 8.18]{GilbargTrudinger2001}, for instance) applied on a finite cover of balls under coordinate charts, we find \begin{equation}\label{eq:Harnack}\|W\|_{L^{r}(h)}\leq C\inf_M W
    \end{equation} for some $C(h,n,r)>0$ and $1\leq r< n/(n-2)$. Here we used connectedness of $M$. The constants are uniform for metrics in a sufficiently small $C^2$-neighborhood of $h$.
    
   Then Chebyshev's inequality yields
    $\vol_{h}(\{1+y<1/2\})\leq 2^q\|y\|^q_{L^q(h)}$, which is small by assumption and Sobolev's embedding. Choosing $\varepsilon$ small enough such that $ 2^q\|y\|^q_{L^q(h)}\leq \vol_{h}(M)/2$, we know that \begin{equation}
        \label{eq:vol_low_bdd}
        \vol_{h}\left(\left\{1+y\geq \frac 12\right\}\right)\geq\frac 12 \vol_{h}(M)\,,
    \end{equation} which can be bounded from below uniformly in $h$ in that fixed neighborhood. Combining \eqref{eq:Harnack} and \eqref{eq:vol_low_bdd} implies that $1+y$ is uniformly bounded from below.
\end{proof}

Now we are in position to prove our remaining local frequency bound.

\begin{lemma}[High frequency term]\label{lem:hig}
    There is a $\Lambda_0>0$ such that for all $\Lambda>\Lambda_0$ outside the spectrum of the Hessian, there are $\rho_\Lambda, c_{\Lambda,2},c_{\Lambda,2k}>0$ with the following property: If $u\in \mathcal C([g])$ with $$|\xi|+\|z\|_{W^{1,2k}(g)}+\|y\|_{W^{1,2k}(g)}\leq \rho_\Lambda\,,$$ then
\begin{equation}\label{eq:I_hi_bdd}
     I^\hi(\eta)\geq c_{\Lambda,2}\|y\|_{W^{1,2}(g)}^2+c_{\Lambda,2k}\|y\|_{W^{1,2k}(g)}^{2k}\,.
  \end{equation}
\end{lemma}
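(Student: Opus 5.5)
The plan is to apply Proposition~\ref{prop:glob_W12k} in the background metric $\tilde g=(\alpha_g(\xi)+z)^{4k/(n-2k)}g$, with conformal factor $1+\eta\in\mathcal C_k([\tilde g])$. That bound is stated for the unnormalized energy $E_{k,\tilde g}$, so the first step is to pass to the normalized functional.

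\textbf{Step 1: from $F$ to $E$.} Write $F_{k,\tilde g}[1+\eta]=E_{k,\tilde g}[V]\,\vol_{g_{1+\eta}}^{-(n-2k)/n}$ with $V=(1+\eta)^{-q/n}$. Expanding the volume factor around $\vol_{\tilde g}(M)$ gives
\[
I^\hi(\eta)=\vol_{\tilde g}(M)^{-\frac{n-2k}{n}}\Bigl(E_{k,\tilde g}[V]-E_{k,\tilde g}[1]-D_1E_{k,\tilde g}[V-1]\Bigr)-R(\eta),
\]
where $R$ collects the second-order terms of the volume and the cross terms between energy and volume. Since $\sigma_k(\tilde g)$ is close to the constant $\sigma_k(g)$, one can check from \eqref{eq:mix_exp} that $R$ is controlled by $C\|\eta\|_{L^2(\tilde g)}^2$ plus higher powers like $\|\eta\|_{L^q}^q$. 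There is also a correction of size $\|\sigma_k(\tilde g)-\bar\sigma\|_{L^\infty}\|\eta\|_{L^2}^2$, which is small because $|\xi|+\|z\|$ is small.

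\textbf{Step 2: coercivity in the gradient.} By Proposition~\ref{prop:glob_W12k}, with $C_{\tilde g}$ uniform because $\tilde g$ ranges over a small $C^2$ neighborhood of $g$, the $E$-difference is at least $c(\int|\nabla(1+\eta)|^{2k}+C\int|\nabla(1+\eta)|^2)$, computed in $\tilde g$. This is a lower bound only in the gradient of $1+\eta$, so it cannot by itself absorb $R$, which is quadratic in $\eta$.

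\textbf{Step 3: absorbing $R$ by the spectral cutoff (the main difficulty).} Here the cutoff $\Lambda$ is used. The lower-order $L^2$ part of $R$ must be absorbed by the quadratic gradient term, using that $y\in E^\hi_{\Lambda,g}$. On that space, $Q_g(y,y)\ge\Lambda\|y\|_{L^2(g)}^2$ and $Q_g(y,y)\le C\|\nabla y\|_{L^2}^2$, so $\|y\|_{L^2}^2\le (C/\Lambda)\|\nabla y\|_{L^2}^2$.

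To transfer this to $\eta=y/(\alpha_g(\xi)+z)$, I would use that $\alpha_g(\xi)+z$ is $C^1$-close to $1$ uniformly, by the $C^m$ bounds in Lemma~\ref{lem:ls} and on the finite-dimensional spaces. This gives $\|\nabla\eta\|_{L^2}\ge \tfrac12\|\nabla y\|_{L^2}-C(|\xi|+\|z\|)\|y\|_{L^2}$. Choosing $\Lambda_0$ large first and then $\rho_\Lambda$ small, all $L^2$ errors are absorbed.

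The higher-order terms in $R$, such as $\int|\eta|^3$ and $\|\eta\|_{L^q}^q$, also need control. I would bound them by Sobolev, $\|\eta\|_{L^q}\lesssim\|\eta\|_{W^{1,2k}}$, combined with smallness of $\rho_\Lambda$. They are then dominated by $\|\nabla\eta\|_{L^2}^2+\|\nabla\eta\|_{L^{2k}}^{2k}$ after using Poincaré on $E^\hi$, since $y$ has mean zero.

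The delicate point is the gradient terms expressed through $V$. In the displayed inequality \eqref{eq:global_bound}, the gradient appears as $\nabla u$ with $u=1+\eta$, so no extra weights should enter. Where weights like $V^{-(k-1)}$ do appear, I would use the Harnack lower bound of Lemma~\ref{lem:Har}, $1+\eta\ge\alpha_*$, together with the Chebyshev-type smallness, to keep the constants uniform.

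\textbf{Step 4: conclusion.} Finally, convert $\|\nabla\eta\|_{L^p(\tilde g)}$ back to $\|y\|_{W^{1,p}(g)}$ using equivalence of the metrics, $C^1$-closeness of $\alpha_g(\xi)+z$ to $1$, and Poincaré on $E^\hi_{\Lambda,g}$ for the zeroth-order part. This yields \eqref{eq:I_hi_bdd}.
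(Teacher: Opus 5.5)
Your proposal is correct and follows the paper's proof essentially step by step. The shared steps are:
\begin{itemize}
\item applying Proposition~\ref{prop:glob_W12k} in the metric $\tilde g$ to $1+\eta$;
\item controlling the volume-normalization and linearization errors by $\|\eta\|_{L^2}^2+\|\eta\|_{L^q}^q$;
\item passing from $\eta$ back to $y$ via $C^1$-closeness of $\alpha_g(\xi)+z$ to $1$;
\item absorbing the $L^2$ error by the spectral gap on $E^{\hi}_{\Lambda,g}$, with $\Lambda_0$ fixed before $\rho_\Lambda$ and all other constants independent of $\Lambda$;
\item absorbing the $L^q$ error by Sobolev, Poincar\'e and smallness of $\rho_\Lambda$.
\end{itemize}

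Two points need tightening. First, the Harnack bound $1+\eta\ge\alpha_*$ is exactly what gives $|V-1+\tfrac qn\eta|\le C\eta^2$, which is needed to replace $D_1E_{k,\tilde g}[V-1]$ by the $\eta$-linearization. Second, keep the volume factor $\vartheta\in[\tfrac12,\tfrac32]$ as a multiplier of the energy difference, as the paper does, rather than moving it into $R$. Otherwise $R$ contains $(\vartheta-1)$ times that difference, which involves gradient terms, so your stated bound on $R$ is not literally true.
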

Note that the smallness assumptions on $\xi$ and $z$ are needed, so we can apply Harnack's inequality, have admissibility in terms of the intermediate metric $\tilde g$, and comparability of $\eta$ and~$y$.

\begin{proof}
   The goal is to apply Proposition \ref{prop:glob_W12k}. 
   Choose a sufficiently small neighborhood of $g$ in $C^m$, independently of $\Lambda,$ and $m>0$ sufficiently large such that the constants below are uniformly controlled. After fixing $\Lambda_0$, we choose $\rho_\Lambda$ sufficiently small such that $\tilde g$ lies in this neighborhood and Lemma~\ref{lem:Har} gives $1+\eta\geq \alpha_*$.

   If we denote $$V\coloneqq (1+\eta)^{-\frac qn}\qquad \text{and} \qquad \tilde E_{k,\tilde g}[w]\coloneqq \int_M\sigma_k\left( w^{\frac {2q}n}\tilde g\right) \,\mathrm d \vol_{w^{\frac {2q}n}\tilde g}$$ for $w\in \mathcal C_k([\tilde g])\cap W^{1,2k}(\tilde g)$, then $\tilde E_{k,\tilde g}[1+\eta]=E_{k,\tilde g}[V]$. The Harnack bound $1+\eta\geq \alpha_*$ and Taylor's theorem imply that there is some $C>0$ such that $|V-1+q\eta/n|\leq C\eta^2$. Hence, the first variation formulas give \begin{equation*}
    |D_1  E_{k,\tilde g}[V-1]- D_1\tilde E_{k,\tilde g}[\eta]|=(n-2k)
    \left|\int_M \left(V-1+\frac{2q}n \eta\right)\sigma_k(\tilde g) \,\mathrm d \vol_{\tilde g}\right|\leq  C\|\eta\|_{L^2(\tilde g)}^2
\end{equation*} for some $C>0$. Applying Proposition \ref{prop:glob_W12k} thus yields 
\begin{equation}
    \tilde E_{k,\tilde g}[1+\eta]-  \tilde E_{k,\tilde g}[1]-D_1  \tilde E_{k,\tilde g}[\eta]\geq c_{n,k}(C_{\tilde g}\|\nabla_{\tilde g} \eta\|_{L^2(\tilde g)}^2+\|\nabla_{\tilde g} \eta\|_{L^{2k}(\tilde g)}^{2k})-C\|\eta\|_{L^2(\tilde g)}^2\,.\label{eq:prp_appl}
\end{equation} 
 Note further that the constants can be chosen uniformly with respect to $\tilde g$ in the fixed neighborhood. 
 
 To derive a lower bound for $I^\hi$, we have to take the volume into account. Denote $$\vartheta\coloneqq \left(\frac{1}{\vol_{\tilde g}(M)} \int_M (1+\eta)^q \,\mathrm d \vol_{\tilde g}\right)^{-\frac{2k}q}$$ Note that for $\|\eta\|_{W^{1,2k}(g)}$ sufficiently small we have $1/2\leq \vartheta\leq 3/2$. By $|(1+s)^q-1-qs|\leq C (s^2+|s|^q)$, $s>-1$, for some other $C>0$ and Taylor's theorem applied to $r\mapsto (1+r)^{-2k/q}$ at $r=0$, we can bound 
\begin{equation}\label{eq:help2}
    \left|\vartheta-1+\frac{2k}{\vol_{\tilde g}(M)} \int_M \eta \,\mathrm d \vol_{\tilde g}
    \right|\leq C(\|\eta\|_{L^2(\tilde g)}^2+\|\eta\|_{L^q(\tilde g)}^q)\,.
\end{equation}
Since we obtain by direct computation $$\vol_{\tilde g} (M)^{\frac{n-2k}n} D_1F_{k,\tilde g}[\eta]=D_1 \tilde E_{k,\tilde g}[\eta]  - \frac{2k\tilde E_{k,\tilde g}[1]}{\vol_{\tilde g}(M)}\int_{M} \eta \,\mathrm d\vol_{\tilde g}\,,$$ we obtain
\begin{align}
    \notag \vol_{\tilde g}(M)^{\frac{n-2k}n} I^{\hi}(\eta)=& \ \vartheta (\tilde E_{k,\tilde g}[1+\eta]-\tilde E_{k,\tilde g}[1]-D_1\tilde E_{k,\tilde g}[\eta])+(\vartheta-1)D_1\tilde E_{k,\tilde g}[\eta]\\&+\tilde E_{k,\tilde g}[1] \left(\vartheta-1+\frac{2k}{\vol_{\tilde g}(M)} \int_M \eta \,\mathrm d \vol_{\tilde g}\right).\label{eq:exp_eta}
\end{align} Note that $|D_1\tilde E_{k,\tilde g}[\eta]|\leq C\|\eta\|_{L^2(g)}$ for some $C>0$. Hence, combining \eqref{eq:prp_appl} with \eqref{eq:exp_eta} and \eqref{eq:help2} yields \begin{equation*}I^\hi(\eta)\geq c\left(\|\nabla_{\tilde g} \eta\|_{L^2(\tilde g)}^2+ \|\nabla_{\tilde g} \eta\|_{L^{2k}(\tilde g)}^{2k}\right)-C\left(\|\eta\|_{L^2(\tilde g)}^2+ \| \eta\|_{L^q(\tilde g)}^q\right)
    \end{equation*} for some constants $c,C>0$. 

    To return to $y=\eta(\alpha_g(\xi)+z)$, we note that $\alpha_g(\xi)+z$ is close to $1$ in any norm, so also in the $C^1$-sense. Thus, we can find $\tilde c, \tilde C>0$ such that 
    $$\|\nabla_{\tilde g} \eta\|_{L^p(\tilde g)}^p\geq \tilde c\|\nabla y\|^p_{L^p(g)}-\tilde C\|y\|^p_{L^p(g)}\qquad \text{and}\qquad \|\eta \|_{L^r(\tilde g)}^r\leq \tilde C \|y \|_{L^r(g)}^r$$ for $p\in \{2,2k\}$ and $r\in \{2,q\}$. As $2\leq2k\leq q$, we can interpolate $\|y\|^p_{L^p(g)}$ and find
\begin{equation}\label{eq:I_hi_ybound}I^\hi(\eta)\geq c\left(\|\nabla y\|_{L^2(g)}^2+ \|\nabla y\|_{L^{2k}( g)}^{2k}\right)-C\left(\|y\|_{L^2( g)}^2+ \| y\|_{L^q( g)}^q\right)
    \end{equation} with some new constants $c,C>0$, depending on the fixed neighborhood but not on $\Lambda$.

    If we assume $\Lambda$ sufficiently large, the spectral gap gives $$\|y\|_{L^2( g)}^2\leq \frac{C}\Lambda \|\nabla  y\|_{L^2( g)}^2$$ for a $C=C(g,k)>0$ independent of $\Lambda$. Indeed, $Q_g(y)\geq \Lambda \|y\|_{L^2(g)}^2$ and $Q_g(y)\leq \Lambda \|\nabla y\|_{L^2(g)}^2$ for $y$ with mean zero. By Sobolev embedding, the assumption $\|y\|_{W^{1,2k}(\tilde g)}\leq \rho_\Lambda$, and Poincar\'e's inequality, we can bound 
    $$\|y\|_{L^q( g)}^q\leq C \|y\|_{W^{1,2k}( g)}^q\leq C \rho_\Lambda^{q-2k}\|y\|_{W^{1,2k}( g)}^{2k}\leq C \rho_\Lambda^{q-2k}\|\nabla y\|_{L^{2k}( g)}^{2k}\,,$$ where the $C$ changed in the last step. Thus, after choosing $\Lambda$ sufficiently large and $\rho_\Lambda>0$ sufficiently small, we can absorb both negative terms in \eqref{eq:I_hi_ybound}. This proves the desired high-frequency estimate \eqref{eq:I_hi_bdd}.
\end{proof}

\subsection{Proof of Proposition \ref{prop:loc}}

At each minimizer, we choose a neighborhood on which the decomposition from Lemma \ref{lem:decomp} and the previous frequency estimates from \eqref{eq:low} and Lemma~\ref{lem:med}, \ref{lem:mix}, and \ref{lem:hig}  hold. Applying Lemma \ref{lem:one_nghb} to these neighborhoods provides a finite cover up to conformal diffeomorphisms. Note that in case $M$ is conformally equivalent to the sphere there is only one minimizer up to M\"obius transformation, so one fixed neighborhood at the normalized round minimizer suffices. We denote by $2+\gamma$ the maximum of the corresponding exponents from the {\L}ojasiewicz inequality~\eqref{eq:low}.

Choose $\lambda_0>0$, $\Psi_0\in G$, and $v_0\in \mathcal M_k$ such that $$\|\lambda_0 v_0^{-1}(u)_{\Psi_0}-1\|^{2+\gamma}_{W^{1,2}(g_{v_0})}+\|\lambda_0 {v_0}^{-1}(u)_{\Psi_0}-1\|^{\max\{2k,2+\gamma\}}_{W^{1,2k}(g_{v_0})}\leq 2\dist_{k,\gamma}(u)\,.$$ After applying a conformal transformation to enter one of these neighborhoods and absorbing it into $\Psi_0$, Lemma \ref{lem:decomp} gives 
$$\lambda_0(u)_{\Psi_0}=cv(\alpha_g(\xi)+z+y)$$ with $g=g_v$ for some $(c,\xi, z,y)$ close to $(1,0,0,0)$ if $\rho_{\loc}$ is small; cf.~the bounds in \eqref{eq:mult_bdd}. Here $v$ is the center of one of the neighborhoods.

Using the notation from Subsection \ref{subsec:LSL}, we  define $$\mathcal N_g\coloneqq \{\zeta\in B_{\tilde \delta}(0): \Sigma_g(\zeta)=Y_k\} \qquad \text{and}\qquad \dist(\xi,\mathcal N_g)\coloneqq\inf_{\zeta\in \mathcal N_g}|\xi-\zeta |\,,$$  where $B_{\tilde \delta}(0)\subset \mathcal U_g$ and $|\xi|< \tilde \delta/2$. Here $\mathcal N_g$ denotes the zero set $\mathcal Z$ from
\eqref{eq:low} for the choice $\Sigma=\Sigma_g-Y_k$. By conformal covariance and scaling invariance, we find $F_{k,g_0}[u]=F_{k,g}[\alpha_g(\xi)+z+y]$. Inserting the frequency bounds from \eqref{eq:low} and Lemma \ref{lem:med}, \ref{lem:mix}, and \ref{lem:hig} into \eqref{eq:deficit_split}, applied to $\lambda_0 (u)_{\Psi_0}$, gives
\begin{equation}
    F_{k,g_0}[u]-Y_k\geq c(\dist(\xi,\mathcal N_g)^{2+\gamma}+\|z\|_{W^{1,2}(g)}^2+\|y\|_{W^{1,2}(g)}^2+\|y\|_{W^{1,2k}(g)}^{2k})\,. \label{eq:combine_bdd}
\end{equation}
Here we absorbed the mixed term from the estimate of $\tilde I$ in Lemma \ref{lem:mix}. 

Take a $\zeta\in \mathcal N_g$ with $|\xi-\zeta|=\dist(\xi,\mathcal N_g)$. We restrict $\xi$ to a sufficiently small neighborhood of $0$ to ensure that this distance is attained in $\mathcal N_g\cap \mathcal U_g$. To obtain an admissible competitor for $\dist_{k,\gamma}$, we normalize $v\alpha_g(\zeta)$ in $L^q(g_0)$, which gives a minimizer $\tilde v\in \mathcal M_k$. After absorbing this normalization and $c$ into $\lambda>0$, we obtain $$\lambda \frac{(u)_{\Psi_0}}{\tilde v} -1=\frac{\alpha_g(\xi)-\alpha_g(\zeta)+z+y}{\alpha_g(\zeta)}\,.$$ 
By smoothness and positivity of $\alpha_g$ and equivalence of norms, we can bound $|\alpha_g(\xi)-\alpha_g(\zeta)|$ by $\dist(\xi,\mathcal N_g)$ up to a multiplicative constant. Therefore, we deduce
$$\left\|\lambda \frac{(u)_{\Psi_0}}{\tilde v} -1\right\|_{W^{1,p}(g_{\tilde v})}\leq C(\dist(\xi,\mathcal N_g)+\|z\|_{W^{1,p}(g)}+\|y\|_{W^{1,p}(g)})$$ for $p\in \{2,2k\}$ and some $C>0$. By our smallness assumption, inserting these parameters as competitors in the definition of $\dist_{k,\gamma}$ implies 
$$\dist_{k,\gamma}(u)\leq C(\dist(\xi,\mathcal N_g)^{2+\gamma}+\|z\|_{W^{1,2}(g)}^2+\|y\|_{W^{1,2}(g)}^2+\|y\|_{W^{1,2k}(g)}^{2k})\,.$$ Together with \eqref{eq:combine_bdd}, this proves the claim. Note that all constants can be chosen uniformly over finitely many neighborhoods as described before.

In the non-degenerate case, we have seen in Subsection \ref{subsec:LSL} that $\gamma=0$ can be taken in every neighborhood, which finishes the proof.

\section{From local to global stability}
\label{sec:4}
In this section, we prove Theorem \ref{thm:1} and Corollary \ref{cor:glob2loc}. Thus, instead of treating conformal metrics that are already close to being minimizing, we here consider arbitrary smooth, $k$-admissible conformal metrics. This uses the local estimate from Section \ref{sec:3} together with a flow argument.

\subsection{Compactness of almost minimizers via the Guan--Wang flow}\label{subsec:GW}

Let $(g_t)_{t\in [0,\infty)}$ be a family of smooth, $k$-admissible metrics of unit volume. Consider the logarithmic flow $$\frac d{dt} g_t=-(\log(\sigma_k(g_t))-\log(r_k(g_t)))g_t\,\quad \text{with}\quad  \log(r_k(g_t))\coloneqq \frac{1}{\vol_{g_t}(M)}\int_M\log(\sigma_k(g_t))\,\mathrm d \vol_{g_t}.$$ 
Guan--Wang \cite[Theorem 1]{GuanWangFlow2003} proved several properties of this flow, among others the ones that we will use in the following: First, the flow exists globally. Secondly, the normalized energy decreases, while the solutions remain admissible and volume-normalized along the flow. Lastly, along a subsequence of times $(t_j)$, the solution converges in $C^{4,\alpha}$, $0<\alpha<1$, to a smooth, $k$-admissible constant $\sigma_k$-curvature solution while $\sigma_k(g_{t_j})$ converges in $L^2$ to its positive constant curvature. The $C^{4,\alpha}$-convergence can be found in \cite[Theorem 2]{GW04}. Note that for $l=0$ their proof does not require orientability. Therefore, we have $\mathcal F_{k}[g_{t_j}]\to \mathcal F_{k}[g_\infty]$ as $j\to\infty$ and in particular $W^{1,2k}$-convergence of the conformal factors.

We rely on a constructive flow argument, which was developed by Dolbeault--Esteban--Figalli--Frank--Loss \cite{Dolbeault2025} and uses an idea by Christ \cite{christ_2017}; see also \cite{chen_2024,chen_2025,chen_2025b, ChenLuTangWang2026}. However, our argument starts from the non-constructive compactness of almost-minimizing critical points, so establishing an explicit stability constant is beyond the scope of this paper; cf.~Proposition \ref{prop:cpct}.

The first variation formula \eqref{eq:E'} yields
\begin{align*}\frac{d}{dt} \mathcal F_k[g_t]&=-\frac{n-2k}2 \int_M (\log(\sigma_k(g_t))-\log(r_k(g_t)))\sigma_k(g_t)\,\mathrm d\vol_{g_t}\\&=-\frac{n-2k}2 \int_M (\log(\sigma_k(g_t))-\log(r_k(g_t)))(\sigma_k(g_t)-r_k(g_t))\,\mathrm d\vol_{g_t}\leq 0\,,
\end{align*} where we used the definition of $r_k(g_t)$. If the flow starts from unit-volume $g_{u_0}$, then volume preservation gives $$g_t=u_t^{\frac{2q}n}g_0\qquad \text{and}\qquad \int_M u_t^{q} \,\mathrm d\vol_{g_0}=1\,.$$
On every compact time interval, parabolic regularity makes $t\mapsto u_t$ a $C^1$-curve, so in particular $u_t\in W^{1,2k}(g_0)$.

After introducing the Guan--Wang flow, we promote the compactness for solutions with almost minimal energy to compactness of admissible functions with almost minimal energy in the following proposition.

\begin{proposition}\label{prop:flow}
    Let $2\leq k<n/2$ and $(M^n,[g])$ be a smooth, closed, connected, and locally conformally flat Riemannian manifold with $k$-admissible metric $g$. For the exponent $\gamma$ from Proposition \ref{prop:loc}, there are $\varepsilon_*,c_*>0$, depending only on $M,[g],k$, such that every normalized $u\in \mathcal C_k([g])$ with $F_{k,g}[u]-Y_k\leq \varepsilon_*$ satisfies \begin{equation}
        \label{eq:quant_stab0}F_{k,g}[u]-Y_k\geq c_* \dist_{k,\gamma}(u)\,.
    \end{equation}
\end{proposition}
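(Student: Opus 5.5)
The plan is to deduce Proposition \ref{prop:flow} from the local bound, Proposition \ref{prop:loc}. The first step is to show that almost minimizers are close to $\mathcal M_k$ in $\dist_{k,\gamma}$. Once $\dist_{k,\gamma}(u)\le\rho_\loc$, Proposition \ref{prop:loc} gives \eqref{eq:quant_stab0} with $c_*=c_\loc$. So it suffices to find $\varepsilon_*>0$ such that $F_{k,g}[u]-Y_k\le\varepsilon_*$ implies $\dist_{k,\gamma}(u)\le \rho_*:=\min\{\rho_\loc,\delta^*\}/2$, where $\delta^*$ is from Lemma \ref{lem:att}. I will argue this by contradiction, in the spirit of \cite{Dolbeault2025}, running the Guan--Wang flow from $u$ and stopping it at the first time the distance becomes small.

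Suppose there are normalized $u_j\in\mathcal C_k([g])$ with $F_{k,g}[u_j]\to Y_k$ and $\dist_{k,\gamma}(u_j)>\rho_*$. Start the flow at $g_{u_j}$ and write the solution as $u_{j,t}$. By Subsection \ref{subsec:GW}, along a subsequence of times $u_{j,t}$ converges in $C^{4,\alpha}$, hence in $W^{1,2k}$, to a normalized constant-$\sigma_k$ solution $h_j$. Monotonicity gives
\[Y_k\le F_{k,g}[h_j]\le F_{k,g}[u_j]\to Y_k .\]
By Proposition \ref{prop:cpct}(ii), after passing to a subsequence and applying $\Psi_j\in G$, $(h_j)_{\Psi_j}$ converges smoothly to some $v\in\mathcal M_k$. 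Since $\dist_{k,\gamma}$ is $G$-invariant, this yields $\dist_{k,\gamma}(h_j)\to0$. By the $W^{1,2k}$-continuity of $\min\{\dist_{k,\gamma},\delta^*\}$ from Lemma \ref{lem:att}, $\dist_{k,\gamma}(u_{j,t})<\rho_*/2$ for some large time $t$. The map $t\mapsto u_{j,t}$ is a $C^1$ curve into $W^{1,2k}$ on compact time intervals, so by the same continuity there is a first time $t_j$ at which $\dist_{k,\gamma}(u_{j,t_j})=\rho_*$.

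At this time $w_j:=u_{j,t_j}$ is normalized, $k$-admissible and satisfies $\dist_{k,\gamma}(w_j)=\rho_*\le\rho_\loc$. Proposition \ref{prop:loc} then gives
\[F_{k,g}[w_j]-Y_k\ge c_\loc\rho_*.\]
On the other hand, the energy decreases along the flow, so $F_{k,g}[w_j]-Y_k\le F_{k,g}[u_j]-Y_k\to0$. This is a contradiction, which produces $\varepsilon_*$. Note that we never need compactness of the $w_j$ themselves; the local bound applied at the stopping time is what closes the argument.

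The main obstacle is the continuity step: the exit time $t_j$ needs continuity of $t\mapsto\dist_{k,\gamma}(u_{j,t})$, and this requires the flow to be continuous into $W^{1,2k}(g_0)$ on $[0,T]$. It also requires identifying the $W^{1,2k}$ limit at large times, which needs convergence of the subsequence of the flow, not only of $\sigma_k$, in $W^{1,2k}$, together with invariance of $\dist_{k,\gamma}$ under $\Psi_j$. Both follow from the properties cited in Subsection \ref{subsec:GW}, parabolic regularity, and Lemma \ref{lem:att}. I would check carefully that the threshold $\rho_*<\delta^*$ keeps all comparisons within the region where Lemma \ref{lem:att} gives genuine continuity.
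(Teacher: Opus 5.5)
Your proposal is correct and follows essentially the paper's argument: run the Guan--Wang flow, use Proposition~\ref{prop:cpct}(ii) to make the limiting constant-$\sigma_k$ metric close to $\mathcal M_k$, use the continuity from Lemma~\ref{lem:att} and the intermediate value theorem to find a time where $\dist_{k,\gamma}$ equals the threshold, and combine Proposition~\ref{prop:loc} at that time with monotonicity of the energy. The only difference is packaging: you argue by contradiction that a small deficit forces $\dist_{k,\gamma}(u)\le\rho_*$, which gives $c_*=c_{\loc}$, whereas the paper argues directly with a case split at $\theta$ and uses $\dist_{k,\gamma}\le 2$ to get $c_*=c_{\loc}\min\{1,\theta/2\}$.
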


\begin{proof}
Let $\delta^*>0$ be chosen as in Lemma \ref{lem:att} such that the map $\min\{\dist_{k,\gamma},\delta^*\}$ is continuous on normalized functions in $W^{1,2k}(g)$. Let $\rho_{\loc},c_{\loc}>0$ be chosen as in Proposition \ref{prop:loc} such that $F_{k,g}[u]-Y_k\geq c_{\loc}\dist_{k,\gamma}(u)$ if $\dist_{k,\gamma}(u)<\rho_{\loc}$.
    
    Choose $$0<\theta<\min\{\rho_{\loc},\delta^*/2\}\,.$$ Further choose $\varepsilon_*>0$ sufficiently small such that every normalized, constant $\sigma_k$-curvature metric $g_h$ with $F_{k,g}[h]-Y_k\leq \varepsilon_*$ satisfies $\dist_{k,\gamma}(h)<\theta/2$ by Proposition \ref{prop:cpct}. 
    
  If 
    $\dist_{k,\gamma}(u)\leq \theta$, then local stability, Proposition \ref{prop:loc}, proves \eqref{eq:quant_stab0}. Assume from now on 
    $\dist_{k,\gamma}(u)> \theta$. We run the Guan--Wang flow $u_t$ from $u_0=u$. As described at the beginning of this subsection, there is a sequence $(t_j)$ such that $u_{t_j}$ converges to a critical point $u_\infty\in \mathcal C_k([g])$. By monotonicity of the flow and volume preservation, we find $$0\leq F_{k,g}[u_\infty]-Y_k\leq F_{k,g}[u_0]-Y_k\leq \varepsilon_*\,.$$
Thus, we have $\dist_{k,\gamma}(u_\infty)<\theta/2$, and by $W^{1,2k}$-convergence we find some $T>0$ with $$\dist_{k,\gamma}(u_T)<\theta\,.$$ 
   
   As the curve $\{u_t\}_{t\leq T}$ is continuous and its image is compact in $W^{1,2k}(g)$, Lemma \ref{lem:att} implies that 
   $\min\{\dist_{k,\gamma}(u_t),\delta^*\}$ is continuous in $t$. Hence, by the intermediate value theorem, there is $\tau\in (0,T)$ such that $\dist_{k,\gamma}(u_\tau)=\theta$. Yet another time by our choice of $\theta$ and monotonicity, we see that $u_0=u$ satisfies $$F_{k,g}[u_0]-Y_k\geq F_{k,g}[u_\tau]-Y_k\geq c_{\loc} \theta\,.$$ Since $\dist_{k,\gamma}(u)\leq 2$ by \eqref{eq:dist_bound}, choosing $c_*=c_\loc \theta /2$ gives \eqref{eq:quant_stab0}. 
    
   Together with the case $\dist_{k,\gamma}(u)\leq \theta$, this proves \eqref{eq:quant_stab0} with $c_*=c_\loc \min\{1,\theta /2\}$.
\end{proof}

\subsection{Proof of Corollary \ref{cor:glob2loc}}
Finally, take $(u_j)$ as given in the corollary. Then \eqref{eq:quant_stab0} implies $\dist_{k,\gamma}(u_j)\to 0$. Thus, we can choose $\lambda_j>0$, $\Psi_j\in G$, and $v_j\in \mathcal M_k$ such that $$r_j\coloneqq \lambda_jv_j^{-1} (u_j)_{\Psi_j} -1\to 0$$ in $W^{1,2k}(g_{v_j})$ as $j\to\infty$. Normalization and the Sobolev inequality imply that there is some $C>0$ such that $$|\lambda_j-1|\leq \|r_j\|_{L^q(g_{v_j})}\leq C \|r_j\|_{W^{1,2k}(g_{v_j})}\to 0\,.$$  Since $v_j^{-1} (u_j)_{\Psi_j}-1=\lambda_j^{-1} r_j+(\lambda_j^{-1}-1)$ and $\vol_{g_{v_j}}(M)=1$, we can conclude that $$\|v_j^{-1} (u_j)_{\Psi_j}-1\|_{W^{1,2k}(g_{v_j})}\to 0\,.$$ This proves the corollary.  

\subsection{Proof of Theorem \ref{thm:1}}

Assume first that $u$ is normalized. If $F_{k,g_0}[u]-Y_k\leq \varepsilon_*$, then Proposition \ref{prop:flow} applies. If $F_{k,g_0}[u]-Y_k> \varepsilon_*$, then by \eqref{eq:dist_bound}   we find
$$F_{k,g_0}[u]-Y_k\geq \varepsilon_*\geq \frac{\varepsilon_*}{2}\dist_{k,\gamma}(u)\,,$$ which proves the claim for normalized $u$ with $c=\min\{c_*,\varepsilon_*/2\}$.

If $u$ is not normalized, then set $\lambda_*\coloneqq \|u\|_{L^q(g_0)}^{-1}$ such that $\lambda_*u$ is normalized with $F_{k,g_0}[\lambda_* u]=F_{k,g_0}[u]$ by homogeneity and $\dist_{k,\gamma}(\lambda_* u)=\dist_{k,\gamma}( u)$ by taking the infimum in $\lambda$. Then we can apply the estimate as in the normalized case.

\section{Sharpness of the exponents}
\label{sec:5} In this section, we prove that the exponents $2$ and $2k$ in Theorem \ref{thm:1} cannot be decreased in the spherical case. We prove optimality for the specific choice $(M,[g_0])=(\mathbb S^n,g_*)$. The mechanisms exploited here are the same as in the case $k=2$ in \cite{FrankPeteranderl2024_sigma2}. We record how to vary the argument and verify $k$-admissibility explicitly.

\subsection{Sharpness of the quadratic $W^{1,2}$-growth}

The argument in \cite{FrankPeteranderl2024_sigma2} applies here verbatim. Indeed, for $0\neq \phi \perp (\mathcal H_0\oplus \mathcal H_1)$, let $$u_\varepsilon\coloneqq \lambda_\varepsilon(1+\varepsilon\phi)\,,$$ where $\phi$ is a smooth function, $\varepsilon>0$ is small, and $\lambda_\varepsilon$ normalizes the $L^q(g_*)$-norm. Since the G\aa rding cone is open, $u_\varepsilon$ is $k$-admissible. The spectral gap of the Hessian and the same estimates for the distance show that
$$F_{k,g_*}[u_\varepsilon]-Y_k(\mathbb S^n,g_*)\qquad \text{and} \qquad \inf_{\lambda>0, \Psi\in G}\|\lambda (u_\varepsilon)_\Psi-1\|^2_{W^{1,2}(g_*)}$$ behave both asymptotically like $\varepsilon^2$.
Here we used the distance in the form of Lemma \ref{lem:dist_notions}, (ii). Hence, the quadratic exponent is optimal, in the sense that the exponent cannot be decreased close to the set of minimizers.

\subsection{Sharpness of the $2k$-th power in the $W^{1,2k}$-growth} 

We use the family of test functions from \cite{FrankPeteranderl2024_sigma2} given by $$u_{\delta,\xi}\coloneqq \lambda_{\delta,\xi} (1+\delta (1)_{\Psi_\xi})$$ with $\lambda_{\delta,\xi}$ chosen such that $u_{\delta,\xi}$ is $L^q(g_*)$-normalized. Recall that $\Psi_\xi$ was defined at the end of Subsection \ref{subsec:conf_Hess}.

We verify that $u_{\delta,\xi}$ is $k$-admissible. We consider $V^{-2}g_*$ with $V\coloneqq (1)^{-q/n}_{\Psi_\xi}$. We diagonalize the Schouten tensor of $1+\delta (1)_{\Psi_\xi}$ with respect to an orthonormal basis with one unit vector parallel to $\nabla V$ and the others orthogonal to it. Then the Schouten tensor has $n-1$ eigenvalues $\tau>0$ and one eigenvalue $\rho>-\tau (n-k)/k$. The latter follows by direct computation. Thus, for $1\leq j\leq k$ we find by definition of the $j$-th elementary symmetric polynomial that $$\sigma_j(\rho, \tau,\dots,\tau)=\binom{n-1}{j-1}\tau^{j-1}\left(\rho+\frac{n-j}{j}\tau\right)>0,$$ and hence $1+\delta (1)_{\Psi_\xi}$ is $k$-admissible.

Steps 2-4 of \cite[Subsection~5.2]{FrankPeteranderl2024_sigma2} now apply mutatis mutandis after replacing $4$ by $2k$ in the respective formulas for the volume, the total $\sigma_2$-curvature, the $W^{1,4}$-norm and the M\"obius action. We omit the details. Since the two profiles $1$ and $\delta(1)_{\Psi_\xi}$ separate asymptotically similarly to \cite{FrankPeteranderl2024_sigma2}, we can choose sequences $\delta_j\to 0$ and $|\xi_j| \to1$ such that $$F_{k,g_*}[u_{\delta_j,\xi_j}]-Y_k(\mathbb S^n,g_*)\qquad \text{and}\qquad \inf_{\lambda>0, \Psi\in G}\|\lambda (u_{\delta_j,\xi_j})_\Psi-1\|_{W^{1,2k}(g_*)}^{2k}$$ behave both asymptotically like $\delta_j^{2k}$. Hence, the exponent $2k$ is optimal.

\subsection{Sharpness of the lower exponent bound $2+\gamma$}
The last example describes an obstruction already recorded in \cite[Proposition~4.3]{EnNeSp}. 

 Recall that $\Sigma_g(\xi)=F_{k,g}[\alpha_g(\xi)]$ is the reduced functional for the Lyapunov--Schmidt graph $\alpha_g$ and that $\mathcal N_g$ is the set of functions with $\Sigma_g(\xi)= Y_k$ as defined in the proof of Proposition~\ref{prop:loc}. We consider a case where $M$ is not conformally equivalent to the sphere and $\Sigma_g-Y_k$ does not vanish identically near $0$. Let $P_m$ be its first nonzero homogeneous term $P_m$ from Taylor's theorem with even $m\geq 4$. Then there is a vector $e$ with $|e|=1$ and $P_m(e)>0$. Set $$u_t\coloneqq \frac{ v\alpha_g(te)}{\|v\alpha_g(te)\|_{L^q(g_0)}},$$ which are normalized and $k$-admissible. Then $$F_{k,g_0}[u_t]-Y_k=\Sigma_g(te)-  Y_k=  t^m P_m (e)+\mathcal O(t^{m+1})\,.$$ 
 
 For $p\in \{2,2k\}$, we denote the individual distance in the two-term distance $\dist_{k,\gamma}$ by $$d_p(u)\coloneqq \inf_{\lambda>0, \Psi\in G,w\in \mathcal M_k}\|\lambda w^{-1}(u)_\Psi-1\|_{W^{1,p}(g_w)}\,.$$
    We claim that $d_p(u_t)$ behaves asymptotically like $t$. Indeed, if we choose $w=v$ and $\Psi=\mathbbm 1$ and absorb the normalization into $\lambda$, then $$d_p(u_t)\leq \|\alpha_g(te)-1\|_{W^{1,p}(g_v)}\leq C t$$ for some $C>0$ as $\alpha_g(0)=1$. For the other bound, assume by contradiction and Lemma \ref{lem:dist_notions},~(i), that there are $t_j\to 0$, $w_j\in\mathcal M_k$, and $\lambda_j>0$ such that
$\lambda_j u_{t_j}-w_j=o(t_j)$ in $W^{1,p}(g_0)$. By compactness of $\mathcal M_k$ and normalization, we can pass to a subsequence such that $\lambda_j\to 1$ and $w_j\to v$ smoothly as $j\to\infty$. By Lemma \ref{lem:ls}, $w_j$ belongs to the graph $\alpha_g$ with parameters $\zeta_j\to 0$ and $\Sigma_g(\zeta_j)=Y_k$. Hence, for suitably chosen $\tilde \lambda_j$ we find $$\tilde \lambda_j \alpha_g(t_je)-\alpha_g(\zeta_j)=o(t_j)$$ in $W^{1,p}(g_0)$. Taking the mean and projecting onto $K_g$ yields $\tilde \lambda_j=1+o(t_j)$ and $\zeta_j=t_j e+o(t_j)$. However, this contradicts $$0=t_j^{-m}(\Sigma_g(\zeta_j)-Y_k)=P_m(t_j^{-1}\zeta_j)+\mathcal O(t_j)\to P_m(e)>0$$ as $j\to \infty$.

Thus the deficit has order $t^m$, while both distances have order $t$, which excludes every power below $m$. In particular, $\dist_{k,\gamma}(u_t)\geq d_p(u_t)^{2+\gamma}$ implies that any potential stability exponent has to be higher than $2+\gamma$.
 
    \subsection*{Acknowledgement}
    The author would like to thank Rupert Frank for his advice, helpful suggestions, and comments on a first version of this work as well as Tobias König, Julian Scheuer, and Jesse Ratzkin for some stimulating discussions on the topic of this article. We are grateful to Wei Wei for her interest and for sharing their recent preprint \cite{GWW26}. A special thanks also goes to Jeffrey Case for his encouragement to work on this project. The author would also like to thank Larry Read for suggesting the multiplicative representation in another forthcoming work, which contributed to the idea to construct a multiplicative decomposition here. Partial support was provided through the German Research Foundation grants FR 2664/3-1 and TRR 352-Project-ID 470903074 and the Studienstiftung des deutschen Volkes.


\providecommand{\etalchar}[1]{$^{#1}$}

	\end{document}